\renewcommand{\P}{\mathbf{P}}
\newcommand{\E}{\mathbf{E}}
\newcommand{\R}{\mathbb{R}}
\newcommand{\N}{\mathbb{N}}
\newcommand{\F}{\mathcal{F}}
\newcommand{\bigO}{\mathcal{O}}
\theoremstyle{plain}
\newtheorem{theorem}{Theorem}[section]
\newtheorem{lemma}{Lemma}[section]
\newtheorem{remark}{Remark}[section]
\theoremstyle{remark}
\newtheorem{result}[theorem]{Result}
\DeclareMathOperator*{\essinf}{ess\,inf}
\begin{document}

\begin{frontmatter}
\title{Large Deviations Asymptotics for Unbounded Additive Functionals of Diffusion Processes}
%\title{A sample article title with some additional note\thanksref{t1}}
\runauthor{Bazhba, Blanchet, Laeven and Zwart}
\runtitle{Large deviations for additive functionals of diffusion processes}
%\thankstext{T1}{A sample additional note to the title.}

\begin{aug}
%%%%%%%%%%%%%%%%%%%%%%%%%%%%%%%%%%%%%%%%%%%%%%%
%% ORCID can be inserted by command:         %%
%% \orcid{0000-0000-0000-0000}               %%
%%%%%%%%%%%%%%%%%%%%%%%%%%%%%%%%%%%%%%%%%%%%%%%
\author[A]{\fnms{Mihail}~\snm{Bazhba}\ead[label=e1]{M.Bazhba@uva.nl}\orcid{0000-0002-0971-6326}},
\author[B]{\fnms{Jose}~\snm{Blanchet}\ead[label=e2]{jblanche@stanford.edu}\orcid{0000-0001-5895-0912}},
\author[A]{\fnms{Roger J.~A.}~\snm{Laeven}\ead[label=e3]{R.J.A.Laeven@uva.nl}\orcid{0000-0001-9582-3955}},
\and
\author[C]{\fnms{Bert}~\snm{Zwart}\ead[label=e4]{Bert.Zwart@cwi.nl}\orcid{0000-0001-9336-0096}}
%%%%%%%%%%%%%%%%%%%%%%%%%%%%%%%%%%%%%%%%%%%%%%
%% Addresses                                %%
%%%%%%%%%%%%%%%%%%%%%%%%%%%%%%%%%%%%%%%%%%%%%%
\address[A]{Quantitative Economics,
University of Amsterdam, Amsterdam, Netherlands\printead[presep={,\ }]{e1,e3}}

\address[B]{Management Science and Engineering,
Stanford University, Palo Alto, United States of America\printead[presep={,\ }]{e2}}

\address[C]{Stochastics,
Centrum Wiskunde \& Informatica, Amsterdam, Netherlands\printead[presep={,\ }]{e4}}
\end{aug}

\begin{abstract}
We study large deviations asymptotics for a class of unbounded additive functionals, interpreted as normalized accumulated areas, of one-dimensional Langevin diffusions with sub-linear gradient drifts.
Our results provide parametric insights on the speed and the rate functions in terms of the growth rate of the drift and the growth rate of the additive functional.
We find a critical value in terms of these growth parameters that dictates regions of sub-linear speed for our large deviations asymptotics.
Our approach is based upon various constructions of independent interest, including a decomposition of the diffusion process in terms of alternating renewal cycles and a detailed analysis of the paths during a cycle using suitable time and spatial scales.
The key to the sub-linear behavior is a heavy-tailed large deviations phenomenon arising from the principle of a single big jump coupled with the result that at each regeneration cycle the upper-tail asymptotic behavior of the accumulated area of the diffusion process is proven to be semi-exponential (i.e., of heavy-tailed Weibull type).
\end{abstract}

% \begin{abstract}[language=french]
% In French.
% \end{abstract}

\begin{keyword}[class=MSC]
\kwd[Primary ]{60F10}
\kwd[; secondary ]{60J60}
\end{keyword}

\begin{keyword}
\kwd{Large deviations}
\kwd{diffusion processes}
\kwd{heavy tails}
\end{keyword}

\end{frontmatter}

\section{Introduction}

A rich body of theory, pioneered by Donsker and Varadhan in classical work \cite{MR386024, MR428471, MR690656}, provides powerful tools designed to study large deviations for additive functionals of geometrically ergodic Markov processes.
Additive functionals can be interpreted as the ``area'' under the curve of the function drawn by the Markov process evaluated at the function in question.
We are interested in large deviations for the empirical long-term average of functionals that satisfy a polynomial growth condition.

Although a lot of progress has been made in this area, the prevailing assumptions in the literature are often not applicable to natural functionals that are unbounded.
In particular, most existing general results related to large deviations for additive functionals of diffusion and related processes assume the functional of interest to be bounded in a suitable weighted norm; see, for example, \cite{ney1987markov, 10.1214/aop/1176990736, veretennikov1994large, wu1994large, bryc1997large, dembo1997uniform, kontoyiannis2005large, MR2391248, MR3185356, den2019properties} and the references therein.
A much smaller literature (referenced below) has examined the large deviations asymptotic behavior of additive functionals over large time scales in cases where the additive functional is unbounded, typically restricting to specific Markov processes.

In this paper, we develop tail asymptotics for polynomial-growth additive functionals of one-dimensional Langevin diffusions with log-concave densities.
We study how these tail asymptotics depend on the polynomial exponent $p$ of the unbounded additive functional and the growth $\kappa \in (0,1]$ of the  diffusion's drift vector field (i.e., the derivative of the log-density).

Specifically, we assume the drift of the Langevin diffusion $X$ is of the form $x \mapsto -sgn(x)|x|^\kappa, \ \kappa \in (0,1]$, and we impose a growth condition on the additive functional $\int_{0}^{t}f(X(s))\mathrm{d}s$ given by $f(x) = |x|^p$, \  $p > 2\kappa$.
The empirical average
$
\mathcal{A}(t)= \frac{1}{t}\int_{0}^{t}f(X(s))\mathrm{d}s$,
which represents the normalized area under the paths of $\{f(X(s))\}_{s\geq 0}$, converges in probability to
$\E(|X(\infty)|^p)$, as $t \to \infty$, with $X(\infty)$ a random variable distributed as the stationary law of the diffusion process $X$.
Our main goal is to examine the behavior of the upper tail $\P(\mathcal{A}(t) > c)$, with
$c > \E(|X(\infty)|^p)$, as $t\rightarrow\infty$.
We show that as long as $p>2\kappa$, this probability decreases to zero exponentially with sub-linear speed of order $t^{(\kappa+1)/(p+1-\kappa)}$.
This is markedly different from the case $p\leq 2\kappa$, in which the speed is linear \cite{bryc1997large, wuyiao2008}.
% \footnote{The lower tail is not examined in this paper; we expect it to behave more conventionally (i.e., with linear speed).}
%, consistent with the intuition provided in \cite{blanchet2013large}.

%{\color{red} THIS IMPIES THAT IN STATIONARITY THE RATE OF DECAY IS OF DIFFERENT ORDER THAN IN OUR SETTING.
%Note that the i.i.d. analogue of this problem involves a sum of $O(t)$ increments which follow the stationary distribution. Thus, it is not surprising that if $p$ is large enough (in particular $p>\kappa+1$ in such i.i.d. setting. Large deviations rates are obtained with a sub-linear speed (i.e. $t^{(\kappa+1)/p}$).}

Let us explain, on an intuitive level, how we obtain our results.
Suppose that we decompose the path of the Langevin diffusion into ``cycles'' corresponding to returns to a neighborhood of the origin.
We will show that the area under each cycle is heavy-tailed, and that standard intuition from heavy tails applies, i.e., a single big value of the area under a cycle dominates the value of $\mathcal{A}(t)$.
To show that the area for a particular cycle is heavy-tailed, we consider the probability that the area under the curve within a cycle is of order $t$, and examine the diffusion within a cycle on time scales of order $O(t^\beta)$ and spatial scales of order $O(t^{\alpha/p})$ with $\beta\triangleq (1-\kappa)/(p+1-\kappa)$ and $\alpha\triangleq p/(p+1-\kappa)$.
The resulting process,  $X_t(u)\triangleq\frac{1}{t^{\alpha/p}}X(ut^\beta)$, $u\geq 0$, is a diffusion process
with variance parameter of order $O(t^{-\gamma})$ with $\gamma \triangleq (1+\kappa)/(p+1-\kappa)$.
Since $\gamma>0$, the noise of this scaled diffusion process is small.
In terms of this scaled small-noise diffusion, the area under the curve within the cycle in question must be of order $O(1)$, which is shown to be governed by a large deviations rate with speed $O(t^{\gamma})$.
Under the assumption $p > 2\kappa$, $\gamma$ is smaller than $1$.
Hence, we conclude that a typical cycle exhibits semi-exponential tails with shape parameter $\gamma$ (i.e., tails of the form $\exp\{-t^\gamma\}$) and therefore is heavy-tailed.
This stands in contrast to the (more conventional) linear speed typically found in large deviations analysis.

Since the number of cycles in $t$ units of time is approximately linear and obeys a large deviations principle with a speed that is linear in $t$ (i.e., light tails), the overall large deviations asymptotics of the unbounded additive functional of interest displays similar behavior to that of $O(t)$ i.i.d.~increments with semi-exponential tails with shape parameter $\gamma$.
This explains both the rate and the qualitative behavior of how large deviations occur in our setting, consistent with the mechanism in which heavy-tailed large deviations occur.
That is, the most likely way in which large deviations occur in our current setting involves a single cycle lasting $O(t^\beta)$ units of time, and exhibiting fluctuations of order $O(t^{\alpha/p})$.
This is sufficient to accumulate a contribution of order $O(t)$, which is enough to generate a large deviations event of order $O(1)$ away from the typical behavior in the empirical average of the additive functional.
% \footnote{In Section~\ref{overall-methodology}, we argue that the same scaling and splitting technique can be applied to the case in which $\kappa > 1$, leading to the conclusion that the same Weibull-type asymptotics hold if $p > 2\kappa$.
% However, in this case, $\beta < 0$, leading to cycles that are short, which is intuitive given the strong pull towards the origin implied by the super-linear drift.
% Overall, the same heavy-tailed large deviations mechanism should hold.
% We leave the details of this analysis for future work.}
% {\color{red}The mathematical details of our analysis are delicate; we give an overview of the technical challenges in Section~\ref{overall-methodology}.
% }

The mathematical details of our analysis are intricate.
A first major obstacle that complicates our analysis is the implementation of the Freidlin-Wentzell sample-path large deviations principle (LDP), which is applicable only for small-noise diffusions with drift functions that are Lipschitz continuous.
In our case, these conditions are not satisfied and their violation requires a delicate analysis.
Deploying the temporal and spatial scales described above, we study the tail asymptotics of $\mathcal{A}(t)$, as $t \ \to \infty$, through a resulting small-noise diffusion.
To use the Freidlin-Wentzell LDP, we next construct suitable auxiliary diffusions satisfying the Freidlin-Wentzell framework, and use the areas of these auxiliary diffusions as upper and lower bounds to the area of the small-noise diffusion.
This approach enables us to eventually establish upper and lower large deviations bounds, which we prove to coincide.

While we overcome the issues originating from the violation of the Freidlin-Wentzell LDP assumptions, a second major obstacle in developing tail asymptotics per regeneration cycle remains in that the area functional $\int_{0}^{\mathcal{T}(\xi)}|\xi(s)|^p \mathrm{d}s$, $\mathcal{T}(\xi)= \inf\{ t \geq 0: \xi(t)=0\}$, with $\xi$ an absolutely continuous function with square integrable derivative, is not continuous in the uniform topology.
% In earlier attempts of our proof, we tried to develop a form of the extended contraction principle which would exploit path properties of the most probable—-in asymptotic sense---trajectories of the area during the busy period of a diffusion process.  However, the most probable paths leading to a large area are not necessarily concave and this property is rather confining when considering extensions to other Markov processes.
%To deal with this issue, we derive upper and lower large deviations bounds and we show that they coincide.
Although the area functional is discontinuous in general, it is continuous over time horizons that are deterministic (as opposed to first passage times) and, hence, we approximate the area over the respective regeneration cycle, which has a random endpoint, by the area over a sufficiently large time horizon.
Then, the large deviations upper bound is obtained as the optimal value of a variational problem, which we show to converge to a certain value as the time horizon tends to infinity.
For the lower bound, we confine the area under the curve of the process under consideration by intersecting it with events that track the trajectory of the most probable path.
This leads to a variational problem for which the optimal value is shown to converge to the optimal value of the variational problem associated with the upper bound.

Although they occur naturally, large deviations for unbounded additive functionals of Markov processes have been analyzed in only a limited number of papers,
in two strands of the literature.
First, in the probability theory literature, for the specific case of Markov random walks with light-tailed increments, we refer to \cite{gulinskii1994large, duffy2010most, blanchet2013large, duffy2014large, bazhba2020sample}, whereas \cite{guillemin1998area, borovkov2003integral, kulik2005tail} establish results for the areas under the workload process and under the queuing process in a single server queue over a regeneration cycle.
Second, in the physics literature,
%(see, for example, \cite{pedrizzetti1994markov,sreenivasan1997phenomenology, matsumoto2013large, nickelsen2017master}).
\cite{nickelsen2018anomalous, nickelsen2022noise, smith2022anomalous} recently analyzed the asymptotic behavior of additive functionals of the Gaussian Ornstein-Uhlenbeck process and obtained large deviations estimates with sub-linear speed for functionals of polynomial growth faster than quadratic.
Extensions of the results in \cite{nickelsen2018anomalous} to the case of stationary Gaussian processes are in \cite{meerson2019anomalous}.
The results in these papers, all focused on Gaussian settings, provide physical insights largely based on physical arguments.
% These papers provide physical insights into the heavy-tailed behavior of additive functionals of diffusion processes, relying upon analytical techniques from the physics literature.
In this work, we unravel the probabilistic mechanisms that generate the heavy-tailed behavior of the additive functionals under consideration.
Our approach enables us to rigorously prove upper-tail large deviations asymptotics for additive functionals of diffusions with sub-linear drift functions, explicitly compute the speed and rate function of the large deviations asymptotics, and present a methodology of independent interest to deal with more general settings and cases.

The remainder of the paper is organized as follows:
In Sections~\ref{model_presentation} and~\ref{main_result_presentation}, we present the model and our main result (Theorem~\ref{asympotics-for-cal-A}), namely, the upper large deviations asymptotics for the empirical average of the unbounded additive functionals under consideration.
In Section~\ref{overall-methodology}, we present our methodology including the cycle decomposition and scaling techniques mentioned above, which provides a heuristic proof of our main result and a road map of the proof techniques that we use to make the analysis rigorous.
In Section~\ref{2-main-building-blocks}, we present a key lemma (Lemma~\ref{semi-exponential-property-cycles}) for the proof of the main result and, based on this, conclude its proof.
In Section~\ref{continuity-properties-V}, we present and prove some useful continuity properties of the variational problems associated with Lemma~\ref{semi-exponential-property-cycles}.
Section~\ref{proof-lemma-21} is devoted to the proof of Lemma~\ref{semi-exponential-property-cycles}, and  Section~\ref{section-proof-of-ndelta-asy} contains the proof of auxiliary Lemma~\ref{bounded-moment-generating-function-t-1-delta}.
Finally, the paper contains two appendices covering existing large deviations results tailored to our setting and results on upper bounds for densities of first passage times of diffusion processes.

\section{Model Description and Main Result}

\subsection{Model}\label{model_presentation}

We consider a time-homogeneous diffusion process $X$ which is induced by the ensuing stochastic differential equation:
\begin{align}\label{eq:original-multid-stoc-difeq}
\mathrm{d}X(t)=\mathbf{D}(X(t))\mathrm{d}t+ 2^{-1/2}\sigma \mathrm{d}B(t), \qquad t \geq 0.
\end{align}
%\\
%&
Here, $B(t)$ denotes a standard Brownian motion and $\sigma$, the diffusion coefficient, is a positive constant.
We assume that $X(0)=0$
%\\
%&
and we impose a sub-linear growth condition on the drift, as follows:
$$\mathbf{D}(x)=-sgn(x) |x|^{\kappa}, \qquad\kappa \in (0,1].$$
The stochastic differential equation in~\eqref{eq:original-multid-stoc-difeq} is properly defined:
%and (by invoking Proposition 3.6 of \cite{karatzas1998brownian}) has a weak solution. %Indeed, we invoke Proposition 3.6 of \cite{karatzas1998brownian}, and hence, there exists a weak solution to \eqref{eq:original-multid-stoc-difeq}.
since the drift coefficient is a locally bounded measurable function and the diffusion coefficient is a constant, in view of Theorem~10.1.3 in \cite{stroock1997multidimensional}, Eqn.~\eqref{eq:original-multid-stoc-difeq} has a unique weak solution; see also Proposition~5.3.6 of \cite{karatzas1998brownian}.
% BERT: THIS LOOKS ODD. YOU FIRST INVOKE A RESULT FROM KARATSAS/SHREVE TO CONCLUDE A WEAK SOLUTION AND THEN YOU GO ON, NOTE PROPERTIES, QUOTE ANOTHER BOOK TO CONCLUDE AGAIN A WEAK SOLUTION.

The purpose of this paper is to prove upper large deviations asymptotics for the normalized accumulated area $\mathcal{A}(t)$, as $t$ tends to $\infty$, where
\begin{equation*}
\mathcal{A}(t) \triangleq \frac{1}{t}\int_{0}^{t}f(X(s))\mathrm{d}s,  \qquad f(x)=|x|^p,
\end{equation*}
and $p > 0$ is such that $p>2\kappa$. Throughout this paper, we frequently use the following notation:
\begin{equation}
\label{eq-abc}
    \alpha \triangleq   \frac{p}{p+1-\kappa}, \qquad \beta\triangleq \frac{1-\kappa}{p+1-\kappa}, \qquad \gamma \triangleq \frac{1+\kappa}{p+1-\kappa}.
\end{equation}

\subsection{Main Result}\label{main_result_presentation}

In this subsection, we present our main result.
We start with a few definitions that are used in the statement of our main result.
Let $\mathbb{C}[0,T]$, $T>0$, be the space of continuous functions over the domain $[0,T]$ and let $\mathscr{H}[0,T]$ be the subspace of $\mathbb{C}[0,T]$ that contains absolutely continuous functions with square integrable derivative.
Furthermore, for a continuous function $g: \R \rightarrow \R$ and $x_0 \in \R$,
let %\textcolor{red}{BZ: I added a factor 2 in the denominator}
\begin{align}
	I_{x_0, g}^{T}(\xi)
	\triangleq
	\begin{cases}
	\int_{0}^{T}\frac{|\dot{\xi}(s)-g(\xi(s))|^2 }{\sigma^2}\mathrm{d}s, & \ \xi \in \mathscr{H}[0,T] \ \& \ \xi(0)=x_0
	\\
	\infty, & \ \text{otherwise}
	\end{cases},
\label{eq:rate}	
\end{align}
using the dot notation for (time) derivatives, and define for $m>0$, \begin{equation}
\label{eq-regionvariationalproblem}
A(x_0,T,m) \triangleq \left\{\xi \in \mathbb{C}[0,T]: \int_{0}^{T}|\xi(s)|^p\mathrm{d}s \geq m, \ \xi(0)=x_0\right\},
\end{equation}
and the variational problem
\begin{equation}
\label{eq-variationalproblem}
    \mathrm{V}(x_0,T,g,m)\triangleq \inf_{\xi \in A(x_0,T,m)} I_{x_0, g}^T(\xi).
\end{equation}
% {\color{green}YOU ADD A SUBSCRIPT $x_0$ TO THE RATE FUNCTION AND DEFINE IT INFINITY IF NOT STARTING FROM THERE. BUT THEN YOU ADD IT AS CONSTRAINT LATER ON. THIS IS A BIT SUPERFLUOUS. CONSIDER CHANGING THE DEFINITION OF $I$
% [THOUGH IT COULD CREATE ERRORS SO I AM NOT INSISTING STRONGLY]}
We say that $\xi$ is a feasible path for the variational problem $\mathrm{V}(x_0,T,g,m)$ if $ \xi \in A(x_0,T,m)$.
%We denote by $\mathrm{V}(x_0,T,g,m)$ the optimal value of the variational problem $\mathrm{V}(x_0,T,g,m)$.
Finally, let $X(\infty)$ be a random variable distributed as the stationary law of the diffusion process $X$,
the existence of which is ensured due to e.g., \cite{MR1730651}.
Now we are equipped to state our main result. %Recall that  $\gamma \triangleq (1+\kappa)/(p+1-\kappa)$.

\begin{theorem}\label{asympotics-for-cal-A}
% For any fixed $\delta>0$, define $A_1^\delta = \inf\{ t \geq 0: |X(t)| = \delta\}$, and
% $B_1^\delta = \inf\{t > A_1^{\delta} : |X(t)| = 0 \}$. In addition, let $C_1^\delta$ denote $\int_{0}^{B_1^\delta} |X(s)|^p \mathrm{d}s$.
For every fixed $p> 2\kappa$ and $b >\E(|X(\infty)|^{p})$,
	\begin{equation}\lim_{t \to \infty}\frac{1}{t^{\gamma}}\log \P\left(\mathcal{A}(t) \geq b \right)=-\big(b-\E(|X(\infty)|^{p})\big)^{\gamma} \cdot  \mathrm{V}(0,\infty,\mathbf{D},1).
	\end{equation}
\end{theorem}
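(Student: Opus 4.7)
My strategy is to reduce Theorem~\ref{asympotics-for-cal-A} to a heavy-tailed large deviations principle for i.i.d.\ sums, using the two building blocks that the paper promises: Lemma~\ref{semi-exponential-property-cycles}, which provides the per-cycle semi-exponential tail, and Proposition~\ref{counting-process-asymptotics}, which controls the counting process attached to the regeneration structure. The mechanism is the single-big-jump principle for Weibull-like summands. First, I would introduce alternating renewal epochs $0=\tau_0<\tau_1<\tau_2<\cdots$ by letting $X$ exit a small neighborhood of the origin and then return to $0$; the sub-linear odd drift makes these cycles i.i.d.\ and stationary with $\E[\tau_1]<\infty$. Setting $Y_i\triangleq\int_{\tau_{i-1}}^{\tau_i}|X(s)|^p\,\mathrm{d}s$ and $N(t)\triangleq\max\{n:\tau_n\leq t\}$, I write
\begin{equation*}
\int_0^t |X(s)|^p\,\mathrm{d}s \;=\; \sum_{i=1}^{N(t)} Y_i \;+\; R(t),
\end{equation*}
where $R(t)$ is a remainder from the unfinished last cycle.

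The ergodic theorem gives $\E[Y_1]/\E[\tau_1]=\E|X(\infty)|^p$, so up to contributions subordinate to the target speed the event $\{\mathcal{A}(t)\geq b\}$ is the event $\sum_{i=1}^{N(t)}(Y_i-\E Y_1)\geq (b-\E|X(\infty)|^p)\,t$. Lemma~\ref{semi-exponential-property-cycles} hands us a Weibull-type tail $\P(Y_1>y)\approx\exp\bigl(-y^{\gamma}\,\mathrm{V}^*(0,\infty,\mathbf{D},1)\bigr)$ with shape $\gamma\triangleq(\kappa+1)/(p+1-\kappa)<1$ (using $p>2\kappa$). Since $\gamma<1$, the cheapest way to make the sum large is to let one single cycle absorb all the excess, at cost
\begin{equation*}
\bigl((b-\E|X(\infty)|^p)\,t\bigr)^{\gamma}\cdot \mathrm{V}^*(0,\infty,\mathbf{D},1),
\end{equation*}
which after dividing by $t^{\gamma}$ matches the stated right-hand side.

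Operationally, for the upper bound I would decompose $\{\sum_{i=1}^{N(t)}Y_i\geq bt\}$ by conditioning on $N(t)=n$. Proposition~\ref{counting-process-asymptotics}, which operates at linear speed, lets me discard all $n$ that are not $\bigTheta(t)$ at a sub-exponential cost of smaller order than $t^{\gamma}$. Inside such a slice, a union bound over which cycle carries the excess, combined with the inequality $(\sum_i a_i)^{\gamma}\leq \sum_i a_i^{\gamma}$ valid for $\gamma<1$, shows that assigning the whole surplus to one cycle is asymptotically optimal and delivers the correct rate; the remainder $R(t)$ is absorbed by the same per-cycle tail estimate. For the lower bound I would exhibit an explicit strategy: run $\approx t/\E[\tau_1]$ typical cycles and then force one extra cycle whose area is at least $(b-\E|X(\infty)|^p)\,t+o(t)$. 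Because cycles are i.i.d.\ the probabilities factorize, and Lemma~\ref{semi-exponential-property-cycles} supplies a matching lower bound at the advertised rate.

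The most delicate step, I believe, is making the constant agree exactly on both sides, i.e.\ tracking how the prefactor $\mathrm{V}^*(0,\infty,\mathbf{D},1)$ interacts with the shift by the mean $\E|X(\infty)|^p$ and with the rescaling of the target area from $1$ to $(b-\E|X(\infty)|^p)t$. This requires (i) the scaling relation $\mathrm{V}^*(0,\infty,\mathbf{D},m)=m^{\gamma}\,\mathrm{V}^*(0,\infty,\mathbf{D},1)$, which comes from the temporal/spatial rescaling $\beta=(1-\kappa)/(p+1-\kappa)$, $\alpha=p/(p+1-\kappa)$ announced in the introduction, and (ii) continuity of $m\mapsto \mathrm{V}^*(0,\infty,\mathbf{D},m)$, both of which are supplied in Section~\ref{continuity-properties-V}. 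With these in hand, the upper-bound union bound and the lower-bound i.i.d.\ construction produce identical constants, closing the proof. The case $b\leq \E|X(\infty)|^p$ yields the trivial $0$ on the right-hand side, consistent with ergodicity and a logarithmic probability decaying slower than $t^{\gamma}$.
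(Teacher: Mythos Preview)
Your proposal is correct and follows essentially the same route as the paper: regenerative decomposition into i.i.d.\ cycles, control of $N^\delta(t)$ via Proposition~\ref{counting-process-asymptotics} at linear speed (hence negligible against $t^\gamma$), and the single-big-jump mechanism for the Weibull-tailed sum using Lemma~\ref{semi-exponential-property-cycles}. The one cosmetic difference is that the paper invokes Gantert's LDP for sums of semi-exponential random variables (Result~\ref{LDP-semiexponential-rvs}) as a black box to pass from the per-cycle tail to the sum asymptotics, whereas you sketch that step directly via subadditivity of $x\mapsto x^\gamma$ and a union bound; both lead to the same constant after the $\epsilon\to 0$ limits, and your observation that the scaling $\mathrm{V}^*(0,\infty,\mathbf{D},m)=m^\gamma\,\mathrm{V}^*(0,\infty,\mathbf{D},1)$ is what makes the prefactor match is exactly the content hidden inside Lemma~\ref{semi-exponential-property-cycles}.
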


We note that the above theorem fully characterizes the most likely scenario leading to large deviations through the associated variational problem.
The typical path of $\mathcal{A}$ can be estimated by the Law of Large Numbers.
For Harris recurrent Markov processes, it is known that the empirical average $\frac{1}{t}\int_{0}^{t}f(X(s)) \mathrm{d}s$ converges in probability to $\E(f(X(\infty)))$, as $t \to \infty$.
For $f(x)=|x|^p$, $\E(|X(\infty)|^{p})$ can be computed by the stationary distribution $\pi$ of $X$: $\E(|X(\infty)|^{p}) = \int_{\R}|s|^p\pi(\mathrm{d}s)$. %Note that due to \cite{MR1730651}, we ensure the existence of the stationary distribution $\pi$.
Consequently, the Law of Large Numbers dictates that the typical value of $\mathcal{A}(t)$ is  $\E(|X(\infty)|^{p})$. %it is not a straight line with drift because we already took the average - BZ
The rate function associated with the large deviations asymptotics indicates that the most likely large deviations behavior away from the typical path occurs due to a single large cycle.

\begin{remark}
For the lower tail, large deviations asymptotics  hold more conventionally (i.e., with linear speed). To substantiate this: Fix $\epsilon >0$, and
choose $M$ large enough so that $\E(\min\{M, |X(\infty)|^p\}) > \E(|X(\infty)|^p)-\epsilon $.  Note that such $M$ exists in view of the monotone convergence theorem.
Then, $\P\left( \int_{0}^{t}|X(s)|^p  \mathrm{d}s \leq (\E(|X(\infty)|^p)-\epsilon)t\right) \leq \P\left( \int_{0}^{t} \min\{M, |X(s)|^p\}  \mathrm{d}s \leq (\E(|X(\infty)|^p)-\epsilon)t\right)$. Finally, since  $\min\{M, |X(\cdot)|^p\}$ is uniformly bounded, the Donsker-Varadhan theory (\cite{MR386024}) implies a large deviations upper bound with linear speed.
\end{remark}

\subsection{Methodology}\label{overall-methodology}

Our proof strategy relies on a suitable decomposition of $X$ into cycles using regenerative analysis of diffusions.
The definition of cycles involves the introduction of a small boundary layer which is used to define ``entry'' points to the origin:
we introduce a decomposition which is used to account for the contribution of paths on excursion that ``exit'' and re-enter through the boundary layer within a single cycle.
Towards this purpose, we define the regeneration cycles
by considering the following hitting times for $X(\cdot)$:
$B_0^\delta =0$,
$A_1^\delta = \inf\{ t \geq 0: |X(t)| = \delta\}$,
$B_1^\delta = \inf\{t > A_1^{\delta} : |X(t)| = 0 \}, \ldots, A_{n+1}^\delta = \inf\{ t>B_n^\delta: |X(t)| = \delta\}$, and
$B_{n+1}^\delta = \inf\{t > A_{n+1}^{\delta} : |X(t)| = 0 \}$, where $\delta>0$.	
The diffusion $X$ is regenerative with respect to the sequence $\big\{B^{\delta}_j\big\}_{j \geq 0}$ which induces a renewal process $N^\delta(\cdot)$;
	$
	N^{\delta}(t) \triangleq \max\left\{k \geq 0: B^{\delta}_k \leq t\right\}
	$.
This enables us to decompose the process $\mathcal{A}(t)$ as follows:

	\begin{align*}
	\mathcal{A}(t)=\frac{1}{t}\Bigg[
	\underbrace{\int_{0}^{B_1^\delta} |X(s)|^p \mathrm{d}s}_{ \triangleq C_1^{\delta}}
	+
	\sum_{j=1}^{N^{\delta}(t)-1}\Bigg(\underbrace{\int_{B_j^{\delta}}^{B_{j+1}^\delta} |X(s)|^p \mathrm{d}s}_{\triangleq C_{j+1}^{\delta}}  \Bigg)
	+  {\int_{B^\delta_{N^{\delta}(t)}}^{t} |X(s)|^p \mathrm{d}s}
	\Bigg].
	\end{align*}
%\textcolor{red}{BZ: we carefully need to check notation globally. We introduce it once, and then refer to it. No display should be repeated. Also we need to check if we use multiple different notations for same objects. We should be more liberal with adding numbers to equations so we can refer to them. One referee made sort-of a joke about it!}

We introduce some notation employed later in the analysis of $\mathcal{A}$.
Define $\tau_j^{\delta} = B^{\delta}_{j}-B^{\delta}_{j-1}$ as the inter-arrival times of the renewal process $N^{\delta}$ and, for each $j=1,\ldots,N^{\delta}(t)$, denote by $C_{j}^{\delta}=\int_{B^{\delta}_{j-1}}^{B^{\delta}_{j}}|X(s)|^p\mathrm{d}s$ the area under the trajectory of $X$ during $[{B^{\delta}_{j-1},B^{\delta}_{j}}]$.
A pivotal step towards deriving our main result (Theorem~\ref{asympotics-for-cal-A}) consists of establishing that each cycle $C_j^{\delta}$ exhibits heavy-tailed behavior---in particular, semi-exponential behavior---in the asymptotic regime.
Here, we briefly describe the main argument leading to the semi-exponential property of the $C_j^\delta$'s.

First,
note that $X$ satisfies the integral equation $X(w)=\int_{0}^{w}-sgn(X(s))|X(s)|^{\kappa}\mathrm{d}s+ 2^{-1/2}\sigma B(w)$, $X(0)=0$.
Next, we use the space scaling $t^{\alpha/p}$ and the time scaling $t^{\beta}$. %, where we recall that $\alpha=\frac{p}{p+1-\kappa}$ and $\beta=\frac{1-\kappa}{p+1-\kappa}$.
Then, we have that
	\begin{align}
	X_t(u)\triangleq
	\frac{1}{t^{\alpha/p}}X(ut^\beta)
	 &=\frac{1}{t^{\alpha/p}}\int_{0}^{ut^{\beta}}-|X(s)|^{\kappa}sgn(X(s))\mathrm{d}s
	+
	\frac{2^{-1/2}\sigma}{t^{\alpha/p}}B(ut^{\beta})  	
	\nonumber \\
	&
	=	-\int_{0}^{u}\frac{t^{\beta}}{t^{\alpha/p}}|X(st^{\beta})|^{\kappa}sgn(X(st^{\beta}))\mathrm{d}s
	+
	\frac{2^{-1/2}\sigma}{t^{\alpha/p}}B(ut^{\beta})  	\nonumber
	\\
	&
	=
	-\int_{0}^{u}\left|\frac{X(st^{\beta})}{t^{(\alpha/p-\beta)\frac{1}{\kappa}}}\right|^{\kappa}sgn\left(\frac{X(st^{\beta})}{t^{\alpha/p-\beta}}\right)\mathrm{d}s
	+
	\frac{2^{-1/2}\sigma}{t^{\alpha/p-\beta/2}}B(u) \nonumber
	\\
	&
=-\int_{0}^{u}\left|X_t(s)\right|^{\kappa}sgn\left(X_t(s)\right)\mathrm{d}s
	+
	\frac{2^{-1/2}\sigma}{t^{\alpha/p-\beta/2}}B(u), \label{eq-representation-Xt}
	\end{align}
in distribution.
To employ tools from the arsenal of large deviations theory, we work with the small-noise diffusion process $X_t(u)$, which satisfies the following SDE:
\begin{equation}
\label{eq-Xts-definition}
	\mathrm{d}X_t(s)=-sgn(X_t(s))|X_t(s)|^{\kappa}\mathrm{d}s+ \frac{1}{t^{\gamma/2}} 2^{-1/2} \sigma \mathrm{d}B(s).
\end{equation}

The tail asymptotics of $C_1^\delta$ are derived %using an, in principle, conventional LDP of the Freidlin-Wentzell type.
%BZ: deleted this because it contradicts what is said later on.
by relating the distribution of $\frac 1 t C_1^\delta$ to that of an image of $X_t$ obtained by applying the area functional
$\F(\xi) \triangleq \int_{0}^{\mathcal{T}(\xi)}|\xi(s)|^p \mathrm{d}s$, $\mathcal{T}(\xi)= \inf\{ t \geq 0: \xi(t)=0\}$, to it; see Lemma~\ref{distributional-equalities-via-scaled-diffusion-process}, item $i$).
Drawing intuition from classical large deviations theory, large deviations rates are expected to hold with speed of order $t^\gamma$,
where $\gamma<1$ when $p>2\kappa$.
However, the Freidlin-Wentzell LDP does not apply straightforwardly to the small-noise diffusion $X_t$, since its drift coefficient is not a Lipschitz continuous function and the area functional $\cal{F}$ is not suitably continuous.
Whereas this does not lead to a different LDP than one would expect from Freidlin-Wentzell theory,
these obstacles make our proof for the tail asymptotics of $C_1^\delta$ substantially more involved.
%than simply using the Freidlin-Wentzell LDP and a continuous mapping theorem (contraction principle).
For more information regarding our approach, we refer to Lemma~\ref{semi-exponential-property-cycles} and its accompanying discussion.
% For a rigorous treatment of the above argument we refer to Section~\ref{most-important-block}.
%Now, under the assumption $p > 2\kappa$, each cycle exhibits an asymptotic Weibull-like behavior.

% {\color{green}YOU MAKE IT SOUND DECEPTIVELY EASY HERE. BECAUSE WE CANNOT APPLY FREIDLIN WENTZEL TYPE LDPs TO THE ORIGINAL PROBLEM, RIGHT?
% SHOULD WE MAKE THIS MORE CLEAR? }\\
%Finally, we show that the tail probabilities of $N^{\delta}$ decrease exponentially fast.
Following the strategy above, we establish the semi-exponential property of the area under the curve drawn by the diffusion process within a single cycle (Lemma~\ref{semi-exponential-property-cycles}), and we can proceed to deal with the random number of terms in the decomposition of $\mathcal{A}$ using the typical value of $N^\delta$ (see Lemma~\ref{bounded-moment-generating-function-t-1-delta}).
Next, the principle of one big jump for sums of Weibull-tailed random variables applies (see \cite{gantert2014large}), and hence, we establish both the rate and the qualitative behavior of how large deviations occur in this setting;
that is,
the most likely way large deviations occur is through a single large cycle.
In Figure~\ref{proof-flow-chart}, we graphically illustrate the connections between the various lemmas and results leading to the proof of our main result (Theorem~\ref{asympotics-for-cal-A}).
\begin{figure}[h]
\centering
\vskip -0.452cm
\includegraphics[scale=0.56]{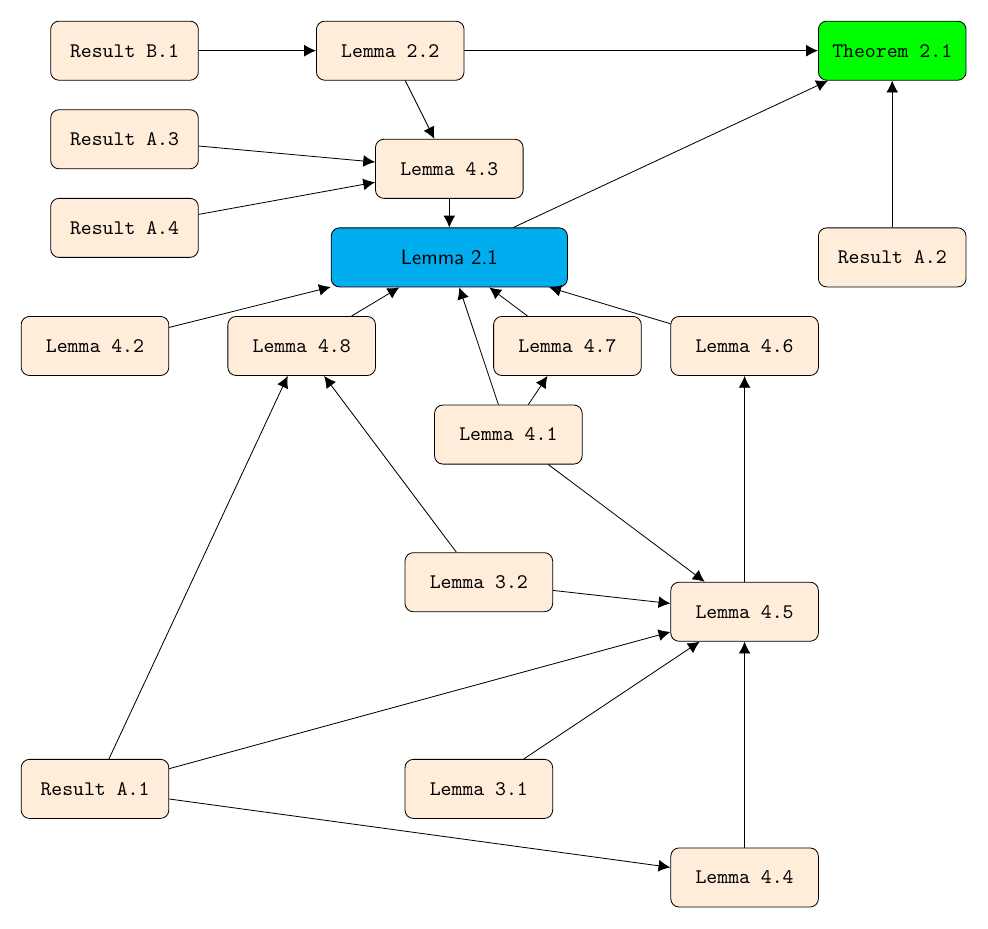}
\caption{The figure displays a diagram of our proof.
The main result is colored green and the  key intermediate result used in its proof is colored blue.}
\label{proof-flow-chart}
\end{figure}

% Consequently, this allows us to apply  Result~\ref{LDP-semiexponential-rvs}, and obtain the log-asymptotics for the tail probabilities of $\mathcal{A}(t)$ as $t$ tends to infinity.
%Thus, use a random walk setting with heavy-tailed increments to study the log-asymptotics of the empirical average.

% Note that our analysis is also applicable to cases where the drift is a super-linear function $(\kappa>1)$.
% For a rough approach, recall our decomposition of the path of the Langevin diffusion into cycles.
We believe that our methodology is powerful enough so that it can be applied to more general diffusion processes.
To illustrate the broader applicability of our methodology, consider the case where the drift function is super-linear ($\kappa > 1$), and recall our decomposition of the path of the Langevin diffusion into cycles.
Next, employing time scales of order $O(t^\beta)$ and spatial scales of order $O(t^{\alpha/p})$, % with $\beta=(1-\kappa)/(p+1-\kappa)$ and $\alpha=p/(p+1-\kappa)$ as before,
we obtain a  small-noise diffusion with diffusion coefficient of order $O(t^{-\gamma/2})$.
Then, at each cycle, we consider the probability that the area swept under the curve of the additive functional of the small-noise diffusion is of order $O(1)$.
Following a similar approach as in the case of $\kappa \leq 1$, the large deviations rates would be of order $O(t^{\gamma})$.
Under the assumption $p > 2\kappa$, $\gamma$ is smaller than $1$, thus the Weibull-like behavior of each cycle is obtained.
Now, however, we have $\beta<0$, inducing short cycles, due to the drift's strong pull towards the origin.
Hence, for the case $\kappa>1$, the overall large deviations asymptotics of the additive functional of interest is similar to that of $O(t)$ i.i.d. increments with semi-exponential tails (i.e., Weibull-type heavy tails) with shape parameter $\gamma$.
Consequently, the principle of one big jump is also expected to apply, and large deviations occur due to a single large cycle.
We note furthermore that very similar splitting and scaling arguments can be applied for regularly varying drift functions satisfying $x\mapsto-sgn(x)|x|^\kappa \cdot l(x)$, where $l(\cdot)$ is a non-negative slowly varying function.
(The same type of generalization can be allowed similarly for the function $f(\cdot)$.)
This change would introduce a slowly varying component in the asymptotic result at the expense of making the development significantly more involved without adding much more conceptual insight; thus, we have decided to simplify the exposition under our current assumptions.
We leave the details of such generalizations, each requiring subtle and elaborate individual treatment, for future work. %, and focus on the case $\kappa \in (0,1]$.

\subsection{A Key Lemma and the Proof of the Main Result}\label{2-main-building-blocks}

In this subsection, we state a key result---Lemma~\ref{semi-exponential-property-cycles}%and Proposition~\ref{counting-process-asymptotics}
---used subsequently in the proof of our main result---Theorem~\ref{asympotics-for-cal-A}.
We start with a detailed road map of the proof of the tail asymptotics for $C_1^\delta$ established in Lemma~\ref{semi-exponential-property-cycles}; its full proof is contained in Sections~\ref{continuity-properties-V} and~\ref{proof-lemma-21}.

\subsubsection{Tail asymptotics for $C_1^{\delta}$}
The tail asymptotics for the area $C_1^\delta$ are provided by the following lemma, which plays a central role in the paper. 	
Recall that $A_1^\delta = \inf\{ t \geq 0: |X(t)| = \delta\}$, $B_1^\delta = \inf\{t > A_1^{\delta} : |X(t)| = 0 \}$ and $C_1^\delta=\int_{0}^{B_1^{\delta}}|X(s)|^p \mathrm{d}s$.

\begin{lemma}\label{semi-exponential-property-cycles}
For every fixed $p > 2\kappa$, $b \geq 0$ and $\delta>0$,
	\begin{equation}
	\lim_{t \to \infty}\frac{1}{t^{\gamma}}\log\P\left(C_1^\delta \geq bt\right)=-b^{\gamma}\cdot \mathrm{V}(0,\infty,\mathbf{D},1).
	\end{equation}
\end{lemma}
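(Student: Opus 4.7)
The plan is to reduce the problem to a Freidlin--Wentzell small-noise sample-path LDP for the scaled process $X_t(u)=t^{-\alpha/p}X(ut^{\beta})$ from Section~\ref{overall-methodology}. Since $\alpha+\beta=1$, a direct change of variables gives $\int_0^{B_1^{\delta}}|X(s)|^{p}\mathrm{d}s = t\int_0^{\tilde B_1^{\delta}}|X_t(u)|^{p}\mathrm{d}u$ with $\tilde B_1^{\delta}=B_1^{\delta}/t^{\beta}$, so $\P(C_1^{\delta}\geq bt)$ equals the probability that the scaled diffusion accumulates area $\geq b$ before returning to $0$. Since $X_t$ has noise coefficient $\epsilon_t=\sigma t^{-(\kappa+1)/[2(p+1-\kappa)]}$, the natural Freidlin--Wentzell speed $\epsilon_t^{-2}\sigma^2=t^{(\kappa+1)/(p+1-\kappa)}$ already matches the claim. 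The $b$-dependence enters through the scaling $\eta(s)=b^{1/(p+1-\kappa)}\xi(s/b^{(1-\kappa)/(p+1-\kappa)})$: this preserves the form $\dot{\eta}+sgn(\eta)|\eta|^{\kappa}$ up to a scalar factor, multiplies the action $I_{0,\mathbf{D}}^{\infty}$ by $b^{(\kappa+1)/(p+1-\kappa)}$, and maps the constraint $\int|\xi|^{p}\geq 1$ to $\int|\eta|^{p}\geq b$, so that $\mathrm{V}^{\ast}(0,\infty,\mathbf{D},b)=b^{(\kappa+1)/(p+1-\kappa)}\mathrm{V}^{\ast}(0,\infty,\mathbf{D},1)$. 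It therefore suffices to prove $\lim_{t\to\infty}t^{-(\kappa+1)/(p+1-\kappa)}\log\P\bigl(\int_0^{\tilde B_1^{\delta}}|X_t|^{p}\mathrm{d}u\geq b\bigr)=-\mathrm{V}^{\ast}(0,\infty,\mathbf{D},b)$, the case $b=0$ being trivial.

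\textbf{Upper bound.} Two structural obstacles block a direct appeal to Freidlin--Wentzell: the drift $\mathbf{D}$ is non-Lipschitz at the origin, and the area-until-first-passage functional $\xi\mapsto\int_0^{\mathcal{T}(\xi)}|\xi|^{p}\mathrm{d}s$ is discontinuous in the uniform topology. I would address both by a horizon truncation, writing $\P(\int_0^{\tilde B_1^{\delta}}|X_t|^{p}\mathrm{d}u\geq b)\leq\P(\int_0^{T}|X_t|^{p}\mathrm{d}u\geq b)+\P(\tilde B_1^{\delta}>T)$. Since the drift pulls strongly toward $0$, the first-passage density bounds from Appendix~B control $\P(\tilde B_1^{\delta}>T)$ and render it negligible at the target speed once $T$ is large. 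For the first term I would construct a Lipschitz approximation $\mathbf{D}_{\eta}$ of $\mathbf{D}$ and an auxiliary small-noise diffusion $X_{t}^{\eta,+}$ coupled to $X_t$ via the same Brownian motion and chosen so that $|X_{t}^{\eta,+}|\geq|X_t|$ pathwise; Freidlin--Wentzell applied to $X_{t}^{\eta,+}$ against the now continuous functional $\xi\mapsto\int_0^{T}|\xi|^{p}\mathrm{d}u$ yields the upper bound $-\mathrm{V}^{\ast}(0,T,\mathbf{D}_{\eta},b)$. Sending $\eta\downarrow 0$ and $T\uparrow\infty$ and invoking the continuity of $\mathrm{V}^{\ast}$ in its arguments, as established in Section~\ref{continuity-properties-V}, collapses this to $-\mathrm{V}^{\ast}(0,\infty,\mathbf{D},b)$.

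\textbf{Lower bound.} For the matching lower bound I would choose, for small $\gamma,\rho>0$ and large $T$, a near-optimal feasible path $\xi^{\ast}_{T,\gamma}\in\mathscr{H}[0,T]$ for $\mathrm{V}(0,T,\mathbf{D},b(1+\gamma))$ starting at $0$ and arranged to return to $0$ at time $T$, so that the tube event $\{\sup_{u\in[0,T]}|X_t(u)-\xi^{\ast}_{T,\gamma}(u)|<\rho\}$ forces $\tilde B_1^{\delta}\approx T$ and $\int_0^{\tilde B_1^{\delta}}|X_t|^{p}\mathrm{d}u\geq b$. A Freidlin--Wentzell lower bound for this tube, justified via a Lipschitz regularisation $\mathbf{D}_{\eta}$ that coincides with $\mathbf{D}$ off a small neighbourhood of $0$ (so that $I^{T}_{0,\mathbf{D}_{\eta}}$ and $I^{T}_{0,\mathbf{D}}$ agree on paths close to $\xi^{\ast}_{T,\gamma}$, which is away from $0$ outside a set of small measure), gives probability at least $\exp\bigl(-t^{(\kappa+1)/(p+1-\kappa)}(I^{T}_{0,\mathbf{D}}(\xi^{\ast}_{T,\gamma})+o(1))\bigr)$. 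Sending $\rho,\eta,\gamma\downarrow 0$ and $T\uparrow\infty$ in the correct order and using the continuity of $\mathrm{V}^{\ast}$ from Section~\ref{continuity-properties-V} produces the matching lower bound $-\mathrm{V}^{\ast}(0,\infty,\mathbf{D},b)$.

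\textbf{Main obstacle.} Three limits must be reconciled: the small-noise limit $t\to\infty$, the Lipschitz regularisation $\eta\downarrow 0$, and the horizon truncation $T\uparrow\infty$. The non-Lipschitz drift at the origin forces $\eta\downarrow 0$, the random first-passage horizon $\tilde B_1^{\delta}$ forces $T\uparrow\infty$, and ensuring that $\mathrm{V}^{\ast}(0,T,\mathbf{D}_{\eta},b)\to\mathrm{V}^{\ast}(0,\infty,\mathbf{D},b)$ uniformly enough to be interchanged with the outer $\limsup/\liminf$ is the technical crux; together with a quantitatively sharp first-passage tail bound for $\tilde B_1^{\delta}$, it is precisely what the continuity analysis of Section~\ref{continuity-properties-V} and the density bounds of Appendix~B are designed to supply.
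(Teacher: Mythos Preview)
Your overall strategy---scale to a small-noise diffusion, regularise the drift to Lipschitz, invoke Freidlin--Wentzell on a truncated horizon, then pass to the limit via Section~\ref{continuity-properties-V}---is exactly the paper's. But your upper-bound decomposition
\[
\P\Bigl(\int_0^{\tilde B_1^{\delta}}|X_t|^{p}\,\mathrm{d}u\ge b\Bigr)\le \P\Bigl(\int_0^{T}|X_t|^{p}\,\mathrm{d}u\ge b\Bigr)+\P(\tilde B_1^{\delta}>T)
\]
has a genuine gap: the second term cannot be rendered negligible at speed $t^{(\kappa+1)/(p+1-\kappa)}$ by Appendix~B. Since $\tilde B_1^{\delta}=B_1^{\delta}/t^{\beta}$ and $B_1^{\delta}$ has a $t$-independent law with exponential tails (Lemma~\ref{bounded-moment-generating-function-t-1-delta}), the best available bound is $\P(\tilde B_1^{\delta}>T)\le Ce^{-\theta Tt^{\beta}}$, and $\beta=(1-\kappa)/(p+1-\kappa)<(\kappa+1)/(p+1-\kappa)$ for every $\kappa>0$; at $\kappa=1$ one even has $\beta=0$ and the bound is a constant. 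Applying Appendix~B directly to the scaled process $X_t$ fails too: after Lamperti transform the drift blows up near $0$ (for $\kappa<1$) so that $\essinf(\mu'+\mu^2)=-\infty$, and after Lipschitz regularisation the infimum is still attained near $0$ and is negative, so the density bound gives no decay. Nor can the sample-path LDP rescue you: the event $\{\tilde B_1^{\delta}>T\}$, viewed for the scaled process started near $0$, has the zero path $\xi\equiv 0$ in the closure of the corresponding path set, hence infimum cost $0$.

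The paper circumvents this by \emph{not} bounding the cycle length from $0$. Instead it (i) splits off the area accrued while $X_t\le\epsilon$ and controls it by a Girsanov/reflected-diffusion estimate (Lemma~\ref{area-for-epsilon-bounded-trajectories}); and (ii) bounds the remaining area by a \emph{geometric number of excursions} of the Lipschitz process $X_{t,\epsilon}$ above level $\epsilon$ (Lemma~\ref{stochastic-upper-bound-via-modified-dif-n-iid-cjs}). Within each excursion the process starts at $\epsilon$, not at $0$, and the LDP then gives a strictly positive rate for $\{\tau_{t,\epsilon/2}(\epsilon/2)>H\}$ at the correct speed (Lemma~\ref{high-occupation-interval-asymptotics}); the geometric count is handled separately (Lemma~\ref{log-asy-sum-of-eps-semi-cycles}). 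This excursion decomposition is the missing ingredient in your upper bound.

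Your lower bound also needs sharpening for the same reason: a tube around a near-optimal path $\xi^*_{T,\gamma}$ starting at $0$ does not force $\tilde B_1^{\delta}$ to be large, because any $\rho$-tube around $\xi^*_{T,\gamma}(0)=0$ contains trajectories that touch $0$ and terminate the cycle immediately. The paper resolves this (Lemmas~\ref{lower-bound-with-modified-process}--\ref{logarithim-lower-bound-for-full-cycle}) with a two-stage event: first force the modified process $\tilde X_{t,\epsilon}$ to climb above the diagonal on $(0,\epsilon)$, then confine it to a corridor $(\epsilon/2,M+\epsilon)$ over $[0,H]$, so that the first return to $0$ is pushed beyond $H$ and the area constraint is met.
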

%\textcolor{red}{take $b=1$ in entire paper}
We prove this result for $b\equiv 1$, the more general case trivially follows by replacing $t$ with $bt$.
The proof of Lemma \ref{semi-exponential-property-cycles} in principle aims to invoke the LDP Result~\ref{FW-SPLDP} in the Appendix, combined with sample-path based comparison arguments.
Using the space scaling $t^{\alpha/p}$ and the time scaling $t^{\beta}$, %where $\alpha=\frac{p}{p+1-\kappa}$ and $\beta=\frac{1-\kappa}{p+1-\kappa}$,
we obtain the small-noise diffusion $X_t(\cdot)=X(\cdot\  t^\beta)/t^{\alpha/p}$ over large time-scales (see Lemma~\ref{distributional-equalities-via-scaled-diffusion-process}).
Then, $\frac 1 t C_1^{\delta}$ can be viewed as the image of the scaled diffusion process $X_t$ to which we apply the area functional $\F(\xi) = \int_{0}^{\mathcal{T}(\xi)}|\xi(s)|^p \mathrm{d}s$, $\mathcal{T}(\xi)= \inf\{ t \geq 0: \xi(t)=0\}$.

However, the sample-path LDP in Result~\ref{FW-SPLDP} holds with respect to the uniform topology, and one of the main issues that complicates our analysis is the discontinuity of $\F$ under the uniform metric.
Consequently, the proof for the tail asymptotics of $\frac{1}{t}C_1^\delta$ gets significantly more intricate than e.g., simply applying the contraction principle.
We deal with this issue by a direct approach, which involves the derivation of large deviations upper and lower bounds and proving that they coincide.
This is largely the contents of Section~\ref{proof-lemma-21}.

For the upper bound,
we note that Result~\ref{FW-SPLDP} does not apply to diffusion processes with non-Lipschitz continuous drift.
To overcome this issue,
we construct a diffusion process ${X}_{t,\epsilon}$ with a modified, Lipschitz continuous drift $u_\epsilon$ such that $u_\epsilon \geq \mathbf{D}$.
Hence, in view of stochastic dominance results for diffusion processes (see \cite{yamada1973comparison}),
${X}_{t,\epsilon}$  serves as a stochastic upper bound for $X_t$, and the area swept by ${X}_{t,\epsilon}$ upper-bounds the area swept by $X_t$ (see Lemma~\ref{stochastic-upper-bound-via-modified-dif-n-iid-cjs}).
We note that the area under the trajectories of $X_{t,\epsilon}$ can be written as a functional of $X_{t,\epsilon}$ through the map $\mathcal{F}$.
To deal with the discontinuity of the functional $\mathcal{F}$, it is tempting to directly truncate the first passage time to $0$ with a sufficiently large value of $T$.
However, this approach, when applied directly, fails.
This can already be seen in the case $\kappa=1$, the reason being that the drift vanishes as the process approaches zero, making the time to hit zero relatively large.
Nevertheless, the contribution to the area swept by the process near zero is also small.
Hence, we introduce a boundary layer around the origin to account for the area swept by the process near zero by means of a reflected diffusion.
This construction is studied in Lemma~\ref{area-for-epsilon-bounded-trajectories}, which ultimately shows that the contribution of the area swept by the process around the origin is negligible if the scaled process starts within $O(t^{-\alpha/p})$ units from the origin---this initial condition turns out to be important in the analysis.
Once this contribution is removed, we can focus on the area accumulated only outside the boundary layer.
In this case, the key quantity involves the first passage time to a strictly positive level (uniformly bounded away from the origin).
The drift remains bounded away from zero, and the truncation strategy for a sufficiently large value of a time horizon $T$ now pays off (see Lemma~\ref{high-occupation-interval-asymptotics}).
This overall construction enables us to express the area of the modified diffusion ${X}_{t,\epsilon}$ as a functional over the finite time horizon $[0,T]$.
This functional is known to be continuous with respect to the uniform metric.
For $T$ large enough, we show that the absolute area of the scaled diffusion process $X_{t,\epsilon}$ over the time horizon $[0,T]$ serves as an asymptotic upper bound for $\frac{1}{t}C_1^{\delta}$.
Finally, by Result~\ref{FW-SPLDP} and the contraction principle, we obtain the large deviations upper bound (Lemmas~\ref{tail-asymptotics-for-semicycle} and~\ref{log-asy-sum-of-eps-semi-cycles}).

For the lower bound,
we construct a diffusion process $\tilde{X}_{t,\epsilon}$ with a modified  drift $\tilde{u}_\epsilon$.
Similar as for the upper bound, we construct  $\tilde{X}_{t,\epsilon}$
 in such a way that its drift is Lipschitzian and the area swept by $\tilde{X}_{t,\epsilon}$ now lower-bounds the area swept by $X_t$.
 Then, %by imposing extra conditions,
 we confine the area swept under the modified diffusion process over a suitable fixed time horizon (Lemma~\ref{lower-bound-with-modified-process}).
 Subsequently, we derive a variational problem associated with the lower bound.
 Finally, we prove that the variational problem related to the lower bound has the same optimal value as the variational problem associated with the large deviations upper bound (Lemma~\ref{logarithim-lower-bound-for-full-cycle}), see also Figure~\ref{proof-flow-chart} for a graphical overview of the proof structure.

\subsubsection{Tail asymptotics for $N^{\delta}$}
Before we give the proof of our main result, we provide a final preparatory lemma, which implies that large deviations of
%$N^{\delta}$ occur
%at linear speed.
%
%
%establishes logarithmic asymptotics for
the renewal process associated with the number of visits to $0$, $N^{\delta}(t)=\max\left\{k \geq 0: B^{\delta}_k \leq t\right\}$, occur at linear speed.
A necessary component of our analysis is the moment generating function (m.g.f.) of $\tau_1^\delta=B_1^\delta$, which we prove to be finite in a neighborhood around $0$.

	\begin{lemma}\label{bounded-moment-generating-function-t-1-delta}
		It holds that $\sup\left\{\theta: \E\left(e^{\theta \tau_1^\delta}\right) <\infty\right\}>0$.
  Consequently, $\P\left(\left|\frac{N^{\delta}(t)}{t} -\frac{1}{\E(\tau^{\delta}_1)}\right| \geq  x \right) = O(e^{-t\tilde{I}_N(x)})$
  for some $\tilde{I}_N(x)>0$, $x>0$, as $t\rightarrow\infty$.
	\end{lemma}
The second part of the lemma trivially follows from the first part by Cram\'er's Theorem.
We show the existence of $\E(e^{\theta \tau_1^\delta})$ by invoking upper bounds for first passage time densities of diffusion processes; see Result~\ref{bound-hitting-times-of-diffusion-processes}.
Since Result~\ref{bound-hitting-times-of-diffusion-processes} is not directly applicable to our case, that is, the conditions of the result are not satisfied by $\tau_1^\delta$, we first construct suitable auxiliary first passage times which serve as stochastic upper bounds for $\tau_1^\delta$ and we prove that their m.g.f.'s are finite; see Section~\ref{section-proof-of-ndelta-asy}.

\subsubsection{Proof of the main result}
We conclude this section with the proof of our main result (Theorem~\ref{asympotics-for-cal-A}).

\begin{proof}[Proof of Theorem~\ref{asympotics-for-cal-A}]
 Recall that, by Lemma~\ref{semi-exponential-property-cycles}, the $C_i^\delta$'s exhibit an asymptotic semi-exponential behavior.
 This entails that LDP Result~\ref{LDP-semiexponential-rvs} is applicable in our case.
 Moreover, applying standard results for regenerative processes (see, for example, Theorem~1.2 in \cite{asmussen2008applied}),
 we obtain
 \begin{equation}\label{equivalence-Epi-ratio-ECoEB}
 \E_{\pi}(|X(1)|^p)=\E(|X(\infty)|^p)=\frac{1}{\E(Y)}\E_0 \left(\int_{0}^{Y}|X(s)|^p \mathrm{d}s\right),
 \end{equation}
 where $Y$ is any nonlattice cycle length r.v. with finite mean, and recall that $\pi$ is the steady state distribution of $X$.

Let $\epsilon >0$ be fixed.
Then,
\begin{align*}
& \P\left(\mathcal{A}(t) \geq b \right)
%\\
%&
\leq
\P\left(\sum_{i=1}^{N^{\delta}(t)+1}C_i^{\delta} \geq b t\right)
%\\
%&
% \leq \P\left(\sum_{i=1}^{N^{\delta}(t)+1}C_i^{\delta} \geq b t, \ N^{\delta}(t) \leq t\left(\frac{1}{\E(\tau^{\delta}_1)}+\epsilon\right)\right)+ %\P\left(N^{\delta}(t) > t\left(\frac{1}{\E(\tau^{\delta}_1)}+\epsilon\right)\right)
%\\
%&
\leq
\P\left(\sum_{i=1}^{\left\lceil t(\frac{1}{\E(\tau^{\delta}_1)}+\epsilon) \right\rceil}C_i^{\delta} \geq b t\right)+ \underbrace{\P\left(N^{\delta}(t) \geq  t\left(\frac{1}{\E(\tau^{\delta}_1)}+\epsilon\right)\right)}_{\triangleq (i)}.
\end{align*}
We can apply this inequality to obtain
\begin{align*}
\limsup_{t \to \infty}\frac{1}{t^\gamma}\log \P\left(\mathcal{A}(t) \geq b \right)
%\P\left(\sum_{i=1}^{N^{\delta}(t)+1}C_i^{\delta} \geq b t\right)
%\P\left(\sum_{i=1}^{N^{\delta}(t)+1}C_i^{\delta} \geq b t\right)
&\leq
%\limsup_{t \to \infty}\frac{1}{t^\gamma}\log\left[\P\left(\sum_{i=1}^{\left\lceil t(\frac{1}{\E(\tau^{\delta}_1)}+\epsilon) %\right\rceil}C_i^{\delta} \geq b t\right)
%+\P\left(N^{\delta}(t) \geq t\left(\frac{1}{\E(\tau^{\delta}_1)}+\epsilon\right)\right)\right]
%\\
%&
\limsup_{t \to \infty}\frac{1}{t^\gamma}\log\P\left(\sum_{i=1}^{\left\lceil t\left(\frac{1}{\E(\tau^{\delta}_1)}+\epsilon\right) \right\rceil}C_i^{\delta} \geq b t\right)\\
%\vee
%\limsup_{t \to \infty}\frac{1}{t^\gamma}\log(i)
%\\
&
\leq
-\mathrm{V}(0,\infty,\mathbf{D},1) \left(\left(b-\frac{\E(C_1^{\delta})}{\E(\tau^{\delta}_1)}-\E(C_1^{\delta})\epsilon\right)^+\right)^{\gamma}\\
&=
-\mathrm{V}(0,\infty,\mathbf{D},1) \left(\left(b-\E(|X(\infty)|^p)-\E(C_1^{\delta})\epsilon\right)^+ \right)^{\gamma},
\end{align*}
where in the first inequality we use the principle of the largest term and the fact that $(i)$ has light-tailed asymptotics by Lemma~\ref{bounded-moment-generating-function-t-1-delta}, in the second inequality we use Lemma~\ref{semi-exponential-property-cycles} as well as Result~\ref{LDP-semiexponential-rvs},
and in the last equality we invoke \eqref{equivalence-Epi-ratio-ECoEB}.
The desired upper bound is obtained by letting $\epsilon \downarrow 0$.
For the lower bound,
\begin{align*}
&
\P\left(\mathcal{A}(t) \geq b \right)
 \geq
\P\left(\sum_{i=1}^{N^{\delta}(t)}C_i^{\delta} > b t\right)
\geq \P\left(\sum_{i=1}^{N^{\delta}(t)}C_i^{\delta} > b t, \ N^{\delta}(t) \geq t\left(\frac{1}{\E(\tau^{\delta}_1)}-\epsilon\right)
\right)
%\\
%&
%\qquad\qquad\qquad\qquad\qquad\qquad
%- \P\left(N^{\delta}(t) \notin \left[t\left(\frac{1}{\E(\tau^{\delta}_1)}-\epsilon\right),t\left(\frac{1}{\E(\tau^{\delta}_1)}+\epsilon\right)\right]\right)
\\
&
\geq
{\P\left(\sum_{i=1}^{\left\lfloor t(\frac{1}{\E(\tau^{\delta}_1)}-\epsilon) \right\rfloor}C_i^{\delta} > b t\right)}
- \underbrace{\P\left(N^{\delta}(t) \leq t\left(\frac{1}{\E(\tau^{\delta}_1)}-\epsilon\right)\right)}_{\triangleq (ii)}.
%,t\left(\frac{1}{\E(\tau^{\delta}_1)}+\epsilon\right)\right]\right)}
%\\
%&
%=
%\P\left(\sum_{i=1}^{\left\lfloor t(\frac{1}{\E(\tau^{\delta}_1)}-\epsilon) \right\rfloor}C_i^{\delta} > b t\right) \underbrace{\left(1-\frac{\P\left(N^{\delta}%(t) \notin \left[t\left(\frac{1}{\E(\tau^{\delta}_1)}-\epsilon\right),t\left(\frac{1}{\E(\tau^{\delta}_1)}+\epsilon\right)\right]\right)}{\P\left(\sum_{i=1}^{\left\lceil t\left(\frac{1}{\E(\tau^{\delta}_1)}-\epsilon\right) \right\rceil}C_i^{\delta} > b t\right)}\right)}_{\triangleq (I)}.
\end{align*}
Using Lemma~\ref{semi-exponential-property-cycles} and Result~\ref{LDP-semiexponential-rvs},
\begin{align*}
%&
\liminf_{t \to \infty}\frac{1}{t^\gamma}\log\P\left(\sum_{i=1}^{\left\lfloor t \left(\frac{1}{\E(\tau^{\delta}_1})-\epsilon\right) \right\rfloor}C_i^{\delta} > b t\right)
%\\
%&
%\geq
%-\mathrm{V}(0,\infty,\mathbf{D},1) \left(\left(b-\frac{\E(C_1^{\delta})}{\E(\tau_1^{\delta})}+\E(C_1^{\delta})\epsilon\right) \vee 0\right)^{\frac{\kappa+1}%{p+1-\kappa}}
%\\
%&
\geq
-\mathrm{V}(0,\infty,\mathbf{D},1) \left(b-\E(|X(\infty)|^p)+\E(C_1^{\delta})\epsilon\right)^{\gamma}.
\end{align*}
%Note that in the last equality we used \eqref{equivalence-Epi-ratio-ECoEB}.
Due to Lemma~\ref{bounded-moment-generating-function-t-1-delta},
$(ii)$ has light-tailed asymptotics.
Therefore,
\begin{align*}
\liminf_{t \to \infty}\frac{1}{t^\gamma}\log\P\left(\mathcal{A}(t) \geq b t\right)
\geq
-\mathrm{V}(0,\infty,\mathbf{D},1) \left(b-\E(|X(\infty)|^p)+\E(C_1^{\delta})\epsilon\right)^{\gamma}.
\end{align*}
We obtain the desired lower bound by letting $\epsilon$ tend to $0$, which completes the proof.
\end{proof}

\section{Variational Properties of $\mathrm{V}(\cdot)$}\label{continuity-properties-V}
In this section, we prove some important properties for the variational problems that are associated with the large deviations upper and lower bounds.
Recall that $\mathbb{C}[0,T]$ is the space of continuous functions over the domain $[0,T]$, that $\mathscr{H}[0,T]$ is its subspace of absolutely continuous functions with square integrable derivative, and that $I_{x_0, g}^{T}(\xi)$ is the rate function in \eqref{eq:rate}.
% \begin{itemize}
%	%\item Let the first $\delta$-crossing time  be $\mathcal{U}^{\delta}(\xi)=\inf\{t \geq 0: |\xi(t)| \geq \delta\}$;
%	%\item let the first $\delta$-passage time be $\mathcal{T}^{\delta}(\xi)=\inf\{t \geq \mathcal{U}^{\delta}(\xi): sgn(\xi(\mathcal{U}^{\delta}(\xi))) \cdot \xi(t) \leq 0 \}$; and define the variational problem
%	\item  $$
%	I_{x_0, g}^{T}(\xi)
%	\triangleq
%	\begin{cases}
%	\int_{0}^{T}\frac{|\dot{\xi}(s)-g(\xi(s))|^2 }{\sigma^2}\mathrm{d}s, & \ \xi \in \mathscr{H}[0,T] \ \& \ \xi(0)=x_0
%	\\
%	\infty, & \ otherwise
%	\end{cases};
%	$$
%	\item define the variational problem $$\inf_{\xi \in \mathbb{C}[0,T]}\{I_{x_0, g}^T(\xi): \int_{0}^{T}|\xi(s)|^p\mathrm{d}s \geq m, \ \xi(0)=x_0\},$$
%	as $\mathrm{V}(x_0,T,g,m)$;
%	\item denote with $\mathrm{V}(x_0,T,g,m)$ the optimal value of the variational problem  $\mathrm{V}(x_0,T,g,m)$; and
%	\item recall that $A(x_0,T,m)$ the set $\{\xi \in \mathbb{C}[0,T]: \int_{0}^{T}|\xi(s)|^p\mathrm{d}s \geq m, \ \xi(0)=x_0\}$.
%\end{itemize}
We say that a function $g: \R \rightarrow \R$ is symmetric around $0$ if $g(x)=-g(-x)$.
Recall that $\mathbf{D}(x)=-sgn(x)|x|^{\kappa}, \kappa \in (0,1]$, and note, in the light of the following lemma, that $\mathbf{D}$ is symmetric around 0.
Furthermore, for $\epsilon>0$, consider the following functions:
\begin{equation}
\label{eq-modifieddrift}
u_{\epsilon}(x)
\triangleq
\begin{cases}
-sgn(x)|x|^{\kappa}, &  \ \text{for} \ |x| \geq \epsilon,
\\
-\frac{x}{\epsilon^{1-\kappa}}, &  \ \text{for} \ |x| \leq \epsilon,
\end{cases}
%\end{equation*}
%and
%\begin{equation*}
\qquad\qquad
\tilde{u}_{\epsilon}(x)
\triangleq
\begin{cases}
-x^{\kappa}, &  \ \text{for} \ x \geq \epsilon,
\\
-\epsilon^\kappa, &  \ \text{for} \ x < \epsilon.
\end{cases}
\end{equation}

Now define
\begin{equation}
 A_+(x_0,T,m)\triangleq\left\{\xi \in \mathbb{C}[0,T]: \int_{0}^{T}|\xi(s)|^p\mathrm{d}s \geq m, \ \xi \geq 0,  \ \xi(0)=x_0\right\},
 \end{equation}
 and let %$\mathrm{V}_+(x_0,T,g,m)$ be the optimal value of the variational problem
$\mathrm{V}_+(x_0,T,g,m)\triangleq\inf_{\xi \in A_+(x_0,T,m)}I_{x_0,g}^T(\xi)$.
\begin{lemma}
\label{continuity-properties-of-V}
\begin{itemize}
\item[$i$)] For any continuous function $g: \R \rightarrow \R$ symmetric around 0, it holds that
\[
\mathrm{V}(x_0,T,g,m)
=
\mathrm{V}_+(|x_0|,T,g,m).
\]

% \begin{itemize}
% \item[i-1)] $\mathrm{V}(x_0,T,u_\epsilon,m)
% =
% \mathrm{V}_+(|x_0|,T,u_\epsilon,m),$
% \item[i-2)] $\mathrm{V}(x_0,T,u_\epsilon,m)
% =
% \mathrm{V}_+(|x_0|,\infty,u_\epsilon,m),$
% \item[i-3)]
% $\mathrm{V}(x_0,T,\mathbf{D},m)
% =
% \mathrm{V}_+(|x_0|,T,\mathbf{D},m).$
% \item[i-4)] $\mathrm{V}(x_0,T,\mathbf{D},m)
% =
% \mathrm{V}_+(|x_0|,\infty,\mathbf{D},m).$
% \end{itemize}

\item[$ii$)]
We have that
% \begin{itemize}
% \item[ii-1)]
\begin{align*}
\lim_{T \to \infty}\mathrm{V}_+(|x_0|,T,u_\epsilon,m)&= \mathrm{V}_+(|x_0|,\infty,u_{\epsilon},m), \quad \text{and}\\
 \lim_{T \to \infty}\mathrm{V}(x_0,T,u_\epsilon,m)&= \mathrm{V}(x_0,\infty,u_{\epsilon},m).
\end{align*}
%where all four terms are equal.
% \item[ii-2)]
% and
% $
% \lim_{T \to \infty}\mathrm{V}_+(|x_0|,T,\tilde{u}_\epsilon,m)= \mathrm{V}_+(|x_0|,\infty,\tilde{u}_{\epsilon},m).
% % \quad \text{and} \quad \lim_{T \to \infty}\mathrm{V}(x_0,T,\tilde{u}_\epsilon,m)= \mathrm{V}(x_0,\infty,\tilde{u}_{\epsilon},m);
% $

% \end{itemize}
\item[$iii$)] For any fixed $\epsilon \in (0,1)$, it holds that
% \begin{itemize}
% \item[iii-1)]
$$
|\mathrm{V}_+(\epsilon,\infty,u_\epsilon,1)-\mathrm{V}_+(0,\infty,u_\epsilon,1)| \leq 4 \epsilon^{2\kappa}\frac{1}{\sigma^2}.
$$
% \item[iii-2)]
% $
% |\mathrm{V}_+(\epsilon,\infty,\tilde{u}_\epsilon,1)-\mathrm{V}_+(0,\infty,\tilde{u}_\epsilon,1)| \leq 4 \epsilon^{2\kappa}\frac{1}{\sigma^2},
% $
%  \item[iii-3)]
% $
% \left|\mathrm{V}_+(\epsilon,\infty,\mathbf{D},1)-\mathrm{V}_+(0,\infty,\mathbf{D},1)\right|
% \leq
% 4 \epsilon^{2\kappa}\frac{1}{\sigma^2},
% $
% \item[iii-4)]
% $
% |\mathrm{V}_+(\epsilon,\infty,\mathbf{D},1-\epsilon)-\mathrm{V}_+(0,\infty,\mathbf{D},1)| \leq 4 \epsilon^{2\kappa}\frac{1}{\sigma^2}.
% $
% \end{itemize}

\item[$iv$)] The following limit holds:
\[
\lim_{\epsilon \downarrow 0}\mathrm{V}_+(0,\infty,u_\epsilon,1) =\mathrm{V}_+(0,\infty,\mathbf{D},1).
% \ \text{and} \  \lim_{\epsilon \downarrow 0}\mathrm{V}_+(0,\infty,\tilde{u}_\epsilon,1) =\mathrm{V}_+(0,\infty,\mathbf{D},1).
\]

% Define $A_{>,M}(x,t,m)=\left\{\xi \in \mathbb{C}[0,T]: \int_{0}^{T}\xi(s)^p\mathrm{d}s > m, \ M > \xi > 0,  \ \xi(0)=x\right\},$ and in addition let $A_{+,M}(x,t,m)=\left\{\xi \in \mathbb{C}[0,T]: \int_{0}^{T}\xi(s)^p\mathrm{d}s \geq  m, \ M > \xi \geq 0,  \ \xi(0)=x\right\}.$ Then,
% \begin{itemize}
% \item[v)-1]  \[
% \left|\inf_{\xi \in A_{>, M}(x,T,m)}I_{x,\tilde{u}_{\epsilon}}^{T}(\xi)-\inf_{\xi \in A_{+,M+\frac{1}{T^{4/\kappa}}}(x,T,m)}I_{x,\tilde{u}_{\epsilon}}^{T}(\xi) \right|
%  \leq
%  \frac{1}{\sigma^2}
% 2\left(\epsilon^\kappa+ \frac{1}{T^4}\right)
% \left(
% M
% +
% TM^\kappa+2T\epsilon^\kappa+\frac{1}{T^3}
% \right).
% \]

% \begin{itemize}
% \item[v)-2] For any fixed $M>0$, $T>0$,  and $\epsilon>0$
% \[
% \left|\inf_{\xi \in A_{+,M}(x,T,m)}I_{x,\tilde{u}_{\epsilon}}^{T}(\xi)
% -
% \inf_{\xi \in A_{+,M}(x,T,m)}I_{x,\mathbf{D}}^{T}(\xi)
% \right|
% \leq
% \frac{1}{\sigma^2}(2 \epsilon^{\kappa}M +2 \epsilon^\kappa M^\kappa T+ 2T \epsilon^{2\kappa}).
% \]

%\end{itemize}
\end{itemize}
\end{lemma}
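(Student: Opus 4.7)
For part (i), the plan is a reflection symmetry argument: given any feasible $\xi \in A(x_0, T, m)$, the reflected path $\eta = |\xi|$ satisfies $\eta(0) = |x_0|$, $\eta \geq 0$, and preserves the area constraint since $|\eta|^p = |\xi|^p$; using the oddness of $g$ together with $\dot\eta = \mathrm{sgn}(\xi)\dot\xi$ a.e.\ on $\{\xi \neq 0\}$ (and $\dot\xi = 0$ a.e.\ on $\{\xi = 0\}$), the pointwise identity $|\dot\eta - g(\eta)|^2 = |\dot\xi - g(\xi)|^2$ yields $I_{|x_0|,g}^T(\eta) = I_{x_0,g}^T(\xi)$. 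Conversely, any $\eta \in A_+(|x_0|,T,m)$ lifts to a feasible path of $\mathrm{V}(x_0,T,g,m)$ by a sign flip (with $\mathrm{sgn}(0) := 1$), and infimizing gives both inequalities. For part (ii), I would exploit monotonicity in $T$: for $\xi \in A(x_0,T,m)$ and $T' > T$, appending the autonomous ODE flow $\dot\eta = u_\epsilon(\eta)$ started from $\xi(T)$ (well defined since $u_\epsilon$ is Lipschitz) extends $\xi$ to $[0,T']$ with zero additional rate cost and preserved area, so $T \mapsto \mathrm{V}^*(x_0,T,u_\epsilon,m)$ is non-increasing, and a standard restriction/exhaustion argument identifies the limit with $\mathrm{V}^*(x_0,\infty,u_\epsilon,m)$. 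Combining this with part (i) makes all four listed quantities coincide.

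For part (iii), the plan is a short bridging construction. To establish $\mathrm{V}_+^*(0, \infty, u_\epsilon, 1) \leq \mathrm{V}_+^*(\epsilon, \infty, u_\epsilon, 1) + 4\epsilon^{2\kappa}/\sigma^2$, I would take a near-optimizer $\xi_*$ of the right-hand problem, prepend the linear bridge $\tilde\xi(s) = s\epsilon/\tau$ on $[0,\tau]$ with $\tau := \epsilon^{1-\kappa}$, and glue it to $\xi_*$ shifted by $\tau$. The area under the bridge is nonnegative, so feasibility is preserved; since $u_\epsilon(x) = -x/\epsilon^{1-\kappa}$ for $|x| \leq \epsilon$, the added rate cost satisfies
\[
\int_0^\tau \frac{(\epsilon/\tau + s\epsilon^\kappa/\tau)^2}{\sigma^2}\, \mathrm{d}s \leq \frac{2}{\sigma^2}\Bigl(\frac{\epsilon^2}{\tau} + \frac{\epsilon^{2\kappa}\tau}{3}\Bigr) = \frac{8\epsilon^{1+\kappa}}{3\sigma^2} \leq \frac{4\epsilon^{2\kappa}}{\sigma^2},
\]
using $\epsilon^{1+\kappa} \leq \epsilon^{2\kappa}$ for $\epsilon \in (0,1)$ and $\kappa \leq 1$. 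The reverse inequality is obtained symmetrically by descending from $\epsilon$ to $0$ before gluing to an optimizer of the $0$-starting problem.

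Part (iv) is the main obstacle, since it requires passage to the non-Lipschitz limit $\mathbf{D}$. I would prove the two inequalities separately. For $\limsup_{\epsilon \downarrow 0} \mathrm{V}_+^*(0, \infty, u_\epsilon, 1) \leq \mathrm{V}_+^*(0, \infty, \mathbf{D}, 1)$, take a near-optimizer $\xi_*$ of the $\mathbf{D}$-problem, truncated to large finite $T$ via part (ii). Since $u_\epsilon \equiv \mathbf{D}$ on $\{|x| \geq \epsilon\}$, the discrepancy
$I^{u_\epsilon}(\xi_*) - I^{\mathbf{D}}(\xi_*) = \int_{\{|\xi_*|<\epsilon\}} (u_\epsilon - \mathbf{D})(u_\epsilon + \mathbf{D} - 2\dot\xi_*)/\sigma^2\, \mathrm{d}s$
is supported on $\{|\xi_*| < \epsilon\}$, where $|u_\epsilon|, |\mathbf{D}| \leq \epsilon^\kappa$; combining this with Cauchy--Schwarz against the $L^2$-bound on $\dot\xi_*$ (inherited from $I^{\mathbf{D}}(\xi_*) < \infty$) yields a discrepancy of order $O\!\left(\epsilon^\kappa\sqrt{T}\,\|\dot\xi_*\|_{L^2}\right)$, which vanishes as $\epsilon \downarrow 0$. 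For the matching $\liminf$ bound, take near-optimizers $\xi_\epsilon$ with uniformly bounded rates; $L^2$-boundedness of $\dot\xi_\epsilon$ yields weak $L^2$-compactness and uniform convergence along a subsequence to some $\xi_0 \in A_+(0,\infty,1)$, whereupon lower semi-continuity of the rate functional, together with uniform convergence of $u_\epsilon \to \mathbf{D}$ on compact subsets of $\R \setminus \{0\}$, delivers $I^{\mathbf{D}}(\xi_0) \leq \liminf_\epsilon I^{u_\epsilon}(\xi_\epsilon)$. The delicate step is the analysis near the origin, where the non-Lipschitz character of $\mathbf{D}$ could \emph{a priori} allow unbounded oscillations of $|\dot\xi_\epsilon|$ with finite rate; here the quantitative bridge estimate from part (iii) serves as a safeguard controlling the boundary contribution at $0$.
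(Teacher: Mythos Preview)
Your treatments of (i)--(iii) coincide with the paper's: the reflection $\xi\mapsto|\xi|$ for (i), extension by the autonomous flow $\dot\eta=u_\epsilon(\eta)$ to get monotonicity in $T$ for (ii), and a short linear bridge prepended/appended for (iii). The only cosmetic difference is that the paper takes bridge length $1$ rather than your $\tau=\epsilon^{1-\kappa}$; both yield the $4\epsilon^{2\kappa}/\sigma^2$ bound.

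For (iv) your route diverges from the paper's and is substantially heavier. The paper avoids compactness and lower semicontinuity entirely by exploiting two pointwise facts valid for every nonnegative path $\xi$: first, $u_\epsilon(\xi)\ge\mathbf{D}(\xi)$ on $[0,\infty)$, from which the paper asserts $I^\infty_{0,u_\epsilon}(\xi)\le I^\infty_{0,\mathbf{D}}(\xi)$ for every $\xi\in A_+(0,\infty,1)$, giving $\mathrm V^*_+(0,\infty,u_\epsilon,1)\le\mathrm V^*_+(0,\infty,\mathbf{D},1)$ directly; second, the uniform bound $\|u_\epsilon(\xi)-\mathbf{D}(\xi)\|_\infty\le\epsilon^\kappa$, so that $\dot\xi-u_\epsilon(\xi)\uparrow\dot\xi-\mathbf{D}(\xi)$ and monotone convergence yields $\lim_\epsilon I^\infty_{0,u_\epsilon}(\xi)=I^\infty_{0,\mathbf{D}}(\xi)$ for each fixed $\xi$. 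No subsequences are extracted, no LSC is invoked.

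Your weak-compactness program for the $\liminf$ direction is a legitimate strategy, but as you yourself flag, the ``delicate step'' near the origin---controlling oscillations of $\dot\xi_\epsilon$ where $\mathbf{D}$ is non-Lipschitz and ensuring the limit path stays in $A_+(0,\infty,1)$---is left unresolved, and over the infinite horizon the $L^2$-boundedness of $\dot\xi_\epsilon$ does not follow from the rate bound alone (you would need a uniform bound on $\|u_\epsilon(\xi_\epsilon)\|_{L^2}$, which in turn requires an a priori height bound on $\xi_\epsilon$). In fact the uniform estimate $|u_\epsilon-\mathbf{D}|\le\epsilon^\kappa$ that you already use for the $\limsup$ direction, applied on a fixed finite horizon $[0,T]$ and combined with the monotonicity in $T$ from (ii), gives the $\liminf$ bound just as directly: from $|\dot\xi-\mathbf{D}(\xi)|\le|\dot\xi-u_\epsilon(\xi)|+\epsilon^\kappa$ one gets $I^T_{0,\mathbf{D}}(\xi)\le(1+\delta)I^T_{0,u_\epsilon}(\xi)+C_\delta\epsilon^{2\kappa}T/\sigma^2$ for every $\xi$, hence the same inequality for the infima, and then $\epsilon\downarrow0$, $T\to\infty$, $\delta\downarrow0$ in that order finishes it without any compactness.
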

\begin{proof}
We start with the proof of \textit{i}).
The statement is a consequence of the following claim:
for any  path $\xi \in A(x_0,T,m)$, there exists a path $\tilde{\xi}$ such that $\tilde{\xi} \in A_+(|x_0|,T,m)$ and $I_{x_0,g}^T({\xi})=I_{|x_0|,g}^T(\tilde{\xi})$.
To verify this, consider $\tilde{\xi}(s)=\xi(s) \cdot \mathbf{I}(\xi(s) \geq 0) -\xi(s) \cdot  \mathbf{I}(\xi(s) \leq 0)$ and observe that $\tilde{\xi} \in A_+(|x_0|,T,m)$.
Then, one easily sees that $I_{|x_0|,g}^T(\tilde{\xi})=I_{x_0,g}^T(\xi).$
%\begin{align*}
%I_{|x_0|,g}^T(\tilde{\xi}) &= \int_{0}^{T}\frac{|\dot{\tilde{\xi}}(s)-g(\tilde{\xi}(s))|^2 }{\sigma^2}\mathrm{d}s
%\\
%&
%=
%\int_{0}^{T}\mathbf{I}(\xi(s) \geq 0)\frac{|\dot{\tilde{\xi}}(s)-g(\tilde{\xi}(s))|^2 }{\sigma^2}\mathrm{d}s
%+
%\int_{0}^{T}\mathbf{I}(\xi(s) \leq 0)\frac{|\dot{\tilde{\xi}}(s)-g(\tilde{\xi}(s))|^2 }{\sigma^2}\mathrm{d}s
%\\
%&
%=
%\int_{0}^{T}\mathbf{I}(\xi(s) \geq 0)\frac{|\dot{\xi}(s)-g(\xi(s))|^2 }{\sigma^2}\mathrm{d}s
%+
%\int_{0}^{T}\mathbf{I}(\xi(s) \leq 0)\frac{|-\dot{\xi}(s)-g(-\xi(s))|^2 }{\sigma^2}\mathrm{d}s
%\\
%&
%=
%\int_{0}^{T}\mathbf{I}(\xi(s) \geq 0)\frac{|\dot{\xi}(s)-g(\xi(s))|^2 }{\sigma^2}\mathrm{d}s
%+
%\int_{0}^{T}\mathbf{I}(\xi(s) \leq 0)\frac{|-\dot{\xi}(s)+g(\xi(s))|^2 }{\sigma^2}\mathrm{d}s
%\\
%&
%=
%\int_{0}^{T}\mathbf{I}(\xi(s) \geq 0)\frac{|\dot{\xi}(s)-g(\xi(s))|^2 }{\sigma^2}\mathrm{d}s
%+
%\int_{0}^{T}\mathbf{I}(\xi(s) \leq 0)\frac{|\dot{\xi}(s)-g(\xi(s))|^2 }{\sigma^2}\mathrm{d}s
%\\
%&
%=
%\int_{0}^{T}\frac{|\dot{\xi}(s)-g(\xi(s))|^2 }{\sigma^2}\mathrm{d}s=I_{x_0,g}^T(\xi).
%\end{align*}
Since this  holds for any $\xi \in A(x_0,T,m)$, the proof of our statement follows.
%Indeed, pick $\delta>0$ and pick $\xi \in A(x_0,T,m)$ such that $\mathrm{V}(x_0,T,g,m)+\delta > I_{x_0,g}^T(\xi)$.
%Then, for $\tilde{\xi}(s)=\xi(s) \cdot \mathbf{I}(\xi(s) \geq 0) -\xi(s) \cdot  \mathbf{I}(\xi(s) \leq 0)$, we obtain
%$$\mathrm{V}(x_0,T,g,m)+\delta > I_{x_0,g}^T(\xi) = I_{|x_0|,g}^T(\tilde{\xi}) \geq \mathrm{V}_+(|x_0|,T,g,m).$$
%Letting $\delta$ tend to $0$ proves  $\mathrm{V}(x_0,T,g,m) \geq \mathrm{V}_+(|x_0|,T,g,m)$. On the other hand, fix $\delta> 0$ and choose $\tilde{\xi} \in A_+(|x_0|,T,m)$ such that $\mathrm{V}_+(|x_0|,T,g,m)+\delta > I_{|x_0|,g}^T(\tilde{\xi})$. For $x_0 >0$, let $\xi=\tilde{\xi}$ and for $x_0<0$ let $\xi=-\tilde{\xi}$. In both cases, $I_{x_0,g}^T(\xi) = I_{|x_0|,g}^T(\tilde{\xi})$, and $\xi \in A(x_0,T,m)$. Hence,
%$\mathrm{V}_+(|x_0|,T,g,m)+\delta > I_{|x_0|,g}^T(\tilde{\xi}) = I_{x_0,g}^T({\xi}) \geq \mathrm{V}(x_0,T,g,m).$ By letting $\delta$
%tend to $0$, we get $\mathrm{V}(x_0,T,g,m) \leq \mathrm{V}_+(|x_0|,T,g,m)$. Therefore, $\mathrm{V}(x_0,T,g,m) = \mathrm{V}_+(|x_0|,T,g,m)$.
%% In consequence, the statements \textit{i-1)}-\textit{i-4)} are derived by setting $g=\mathbf{D}$, $g =u_\epsilon$, and $T=\infty$ in each case %accordingly.

\textit{Proof of ii}).
%Without loss of generality, $x_0>0$.
First, observe that $\mathrm{V}_+(|x_0|,T,u_\epsilon,m) \geq \mathrm{V}_+(|x_0|,\infty,u_\epsilon,m)$ for every $T$: every $\xi \in A(x_0,T,m)$ can be extended to
$\xi \in A(x_0,\infty,m)$ by leaving the value of the variational problem unchanged beyond $T$;
for $s>T$, set $\xi_1(s)=e^{-\frac{1}{\epsilon^{1-\kappa}}(s-T)}\xi(T)$, $\xi_2(s) = (\xi(T)^{1-\kappa}-(s-T)^{\frac{1}{1-\kappa}})\vee 0$. Now, construct
the following path in the case of $\kappa < 1$:
$
\tilde{\xi}(s)=\xi(s) \mathbf{I}(s \in [0,T])+ \mathbf{I}(\{\xi(T) \leq \epsilon\} \cap \{s \geq T\}) \xi_1(s)
+\mathbf{I}(\{\xi(T) > \epsilon\} \cap \{s \geq T\})  \xi_2(s),
$
and take $\tilde{\xi}(s)=\xi(s) \mathbf{I}(s \in [0,T])+ \mathbf{I}(\{s \geq T\})  e^{-(s-T)}\xi(T)$ in the case of $\kappa=1$.
Then, $I_{x, u_{\epsilon}}^{T}({\xi})=I_{x, u_{\epsilon}}^{\infty}(\tilde{\xi})$. This argument also shows that
$\mathrm{V}_+(|x_0|,T,u_\epsilon,m)$ is nonincreasing in $T$.

We next show that for every $\delta>0$, there exists a $T_\delta$ such that $\mathrm{V}_+(|x_0|,T,u_\epsilon,m) \leq \mathrm{V}_+(|x_0|,\infty,u_\epsilon,m) + \delta u_\epsilon (x_0)^2/\sigma^2$ for $T>T_\delta$, which together with
the inequality  $\mathrm{V}_+(|x_0|,T,u_\epsilon,m) \geq \mathrm{V}_+(|x_0|,\infty,u_\epsilon,m)$ implies the desired convergence.
For this, fix $\delta>0$ and pick a path $\xi\in A(x_0,\infty,m)$ such that $I_{x, u_{\epsilon}}^{\infty}({\xi}) < \mathrm{V}_+(|x_0|,\infty,u_\epsilon,m)+\delta$.
Furthermore, let $T'_\delta$ be such that $\int_0^{T'_\delta} |\xi(s)|^p \,\mathrm{d}s \geq m-\delta |x_0|^p$.
Next, define the path $\xi_\delta (s) = x_0 \mathbf{I}(\{s \leq \delta \}) +  \xi(s- \delta)\mathbf{I}(\{s >\delta )$.
The area of this path up to time $T_\delta=T'_\delta + \delta$ is at least $m$, and  $\mathrm{V}_+(|x_0|,T_\delta,u_\epsilon,m) \leq I_{x, u_{\epsilon}}^{T'_\delta}({\xi}) <  \mathrm{V}_+(|x_0|,\infty,u_\epsilon,m)+\delta + \delta u_\epsilon (x_0)^2/\sigma^2$.
The desired convergence now follows by recalling that $\mathrm{V}_+(|x_0|,T,u_\epsilon,m)$ is nonincreasing in $T$.

\textit{Proof of iii}).
%We need to prove $
%|\mathrm{V}_+(\epsilon,\infty,u_\epsilon,1)-\mathrm{V}_+(0,\infty,u_\epsilon,1)| \leq 4 \epsilon^{2\kappa}\frac{1}{\sigma^2}
%$.
We prove $\mathrm{V}_+(0,\infty,u_\epsilon,1)-\mathrm{V}_+(\epsilon,\infty,u_\epsilon,1) \leq  4\epsilon^{2\kappa}\frac{1}{\sigma^2}$; the reverse inequality follows by a similar argument.
Fix $\delta>0$, and pick $\xi_{\epsilon} \in A_+(\epsilon,\infty,1)$ such that $I^{\infty}_{\epsilon, u_{\epsilon}}(\xi_\epsilon)< \mathrm{V}_+(\epsilon,\infty,u_\epsilon,1)+ \delta$.
Set
$
\xi_0(s)=\epsilon \cdot s \cdot  \mathbf{I}(s \in [0,1])
+
\xi_{\epsilon}(s-1) \cdot \mathbf{I}(s > 1)
$
and observe that $\xi_0$ belongs to $A_+(0,\infty, 1)$.
Furthermore, observe that
$
I_{0,u_{\epsilon}}^{\infty}(\xi_0)-I_{\epsilon,u_{\epsilon}}^{\infty}(\xi_{\epsilon}) =
 \frac{1}{\sigma^2}\int_{0}^{1}(\epsilon+ \epsilon^{\kappa}s)^2\mathrm{d}s
 % \\
 % &
%  =
% \frac{1}{\sigma^2}( \epsilon+\epsilon^{\kappa}/4+\epsilon+2^{\kappa}\epsilon^{\kappa}(1-0.5^\kappa)/(k+1))
  \leq 4 \epsilon^{2\kappa}\frac{1}{\sigma^2}.
$
Therefore,
$
 \mathrm{V}_+(0,\infty,u_\epsilon,1) \leq I_{0,u_{\epsilon}}^{\infty}(\xi_0) \leq I_{\epsilon,u_{\epsilon}}^{\infty}(\xi_\epsilon)+4 \epsilon^{2\kappa}\frac{1}{\sigma^2} \leq \mathrm{V}_+(\epsilon,\infty,u_\epsilon,1)+\delta+ 4\epsilon^{2\kappa}\frac{1}{\sigma^2}.
$
%Hence, $
% \mathrm{V}_+(0,\infty,u_\epsilon,1)-\mathrm{V}_+(\epsilon,\infty,u_\epsilon,1) \leq \delta+ 4\epsilon^{2\kappa}\frac{1}{\sigma^2}.
%$
By letting $\delta$ tend to $0$, we obtain the desired inequality.

\textit{Proof of iv}).
%We are required to prove $\lim_{\epsilon \downarrow 0}\mathrm{V}_+(0,\infty,u_\epsilon,1) = \mathrm{V}_+(0,\infty,\mathbf{D},1)$.
%We start by proving that $\lim_{\epsilon \downarrow 0}\mathrm{V}_+(0,\infty,u_\epsilon,1) \leq \mathrm{V}_+(0,\infty,\mathbf{D},1)$.
Since $u_{\epsilon} \geq \mathbf{D}$ on $A_+(0,\infty,1)$, we have that $I^{\infty}_{0,u_{\epsilon}}(\xi) \leq I^{\infty}_{0,\mathbf{D}}(\xi)$
for any $\xi \in A_+(0,\infty,1)$ and $\epsilon >0$.
%the following inequality holds: $I^{\infty}_{0,u_{\epsilon}}(\xi) \leq I^{\infty}_{0,\mathbf{D}}(\xi)$.
Therefore,  $\limsup_{\epsilon \downarrow 0}\mathrm{V}_+(0,\infty,u_\epsilon,1) \leq \mathrm{V}_+(0,\infty,\mathbf{D},1)$.
%To obtain a lower bound,
%note that for any $\xi \in A_+(0,\infty,1)$,
%\begin{align}\label{sup-norm-D-uepsilon}
%\|\mathbf{D}(\xi)-u_{\epsilon}(\xi)\|_{\infty}
%&
%=\sup_{\{s: 0 \leq \xi(s) \leq \epsilon\}}|\mathbf{D}(\xi)(s)-u_{\epsilon}(\xi)(s)|
%=
%\sup_{\{s: 0 \leq  \xi(s) \leq \epsilon\}}\left|\xi(s)^{\kappa}-\frac{\xi(s)}{\epsilon^{1-\kappa}}\right| \nonumber
%%\\
%%&
%%=
%%\sup_{\{s: 0 \leq  \xi(s) \leq \epsilon\}}|\xi(s)^{\kappa}|\cdot|1-(\xi(s)/\epsilon)^{1-\kappa}|
%\leq \epsilon^\kappa. \nonumber
%\end{align}
%\textcolor{red}{to be cleaned up} Therefore,
$\dot{\xi}(s)-u_{\epsilon}(\xi(s))$ is increasing in $\epsilon$ and converging to
$\dot{\xi}(s)-\mathbf{D}(\xi(s))$, as $\epsilon$ tends to 0, point-wise in $s$.
Applying the monotone convergence theorem, we obtain,
for any $\xi \in A_+(0,\infty,1)$,
\begin{align*}
\lim_{\epsilon \downarrow 0}I_{0,u_{\epsilon}}^{\infty}(\xi)
& =
\lim_{\epsilon \downarrow 0}\frac{1}{\sigma^2}\int_{0}^{\infty}|\dot{\xi}(s)-u_{\epsilon}(\xi(s))|^2 \mathrm{d}s
=
 \frac{1}{\sigma^2}\int_{0}^{\infty}|\dot{\xi}(s)-\mathbf{D}(\xi(s))|^2 \mathrm{d}s \geq \mathrm{V}_+(0,\infty,\mathbf{D},1).
\end{align*}
Consequently,
%Since the above inequality holds for any $\xi \in A_+(0,\infty,1)$, we obtain the following inequality:
$\liminf_{\epsilon \downarrow 0}\mathrm{V}_+(0,\infty,u_\epsilon,1) \geq \mathrm{V}_+(0,\infty,\mathbf{D},1)$.
%$$\lim_{\epsilon \downarrow 0}\mathrm{V}_+(0,\infty,u_\epsilon,1) = \mathrm{V}_+(0,\infty,\mathbf{D},1).$$
\end{proof}

In the same spirit as in Lemma~\ref{continuity-properties-of-V}, we next prove some important properties related to variational problems associated with the various large deviations lower bounds.
To this end,
define, for every subset $I \in [0,\infty)$,
\begin{equation}
     A_{I}(x,T,m)\triangleq\left\{\xi \in \mathbb{C}[0,T]: \int_{0}^{T}\xi(s)^p\mathrm{d}s \geq  m, \  \xi(t) \in I, t\in [0,T],  \ \xi(0)=x\right\}. \label{eq-Ayx}
\end{equation}
\begin{lemma}
\label{continuity-properties-V-2}
\begin{itemize}
\item[$i$)] For both $u=\tilde{u}_{\epsilon}$ and $u={u}_{\epsilon}$, the following holds for every $x \geq \epsilon$,  $x > y$:
	  $$
		\inf_{\xi \in A_{[y,M+x]}(x,T,m)}I_{x,{u}}^{T}(\xi)-\frac{2x^\kappa}{\sigma^2}\left(M+M^\kappa T+x^\kappa T \right)
		 \leq \inf_{\xi \in A_{[0,M]}(0,T,m)}I_{0,\mathbf{D}}^{T}(\xi).
	$$
%and similarly,
%$$
%		\inf_{\xi \in A_{>y,M+x}(x,T,m)}I_{x,{u}_{\epsilon}}^{T}(\xi)-\frac{2x^\kappa}{\sigma^2}\left(M+M^\kappa T+x^\kappa T \right)
%		 \leq \inf_{\xi \in A_{+,M}(0,T,m)}I_{0,\mathbf{D}}^{T}(\xi).
%$$	
\item[$ii$)] The following limit holds:
$$
\lim_{T \to \infty}\lim_{M \to \infty}\inf_{\xi \in A_{[0,M]}(x,T,m)}I_{x,\mathbf{D}}^{T}(\xi)=\mathrm{V}(x, \infty,\mathbf{D},m).
$$
\end{itemize}
\end{lemma}
\begin{proof}
For \textit{i}),
%we prove that 		
%			$$
%\inf_{\xi \in A_{+,M}(0,T,m)}I_{0,\mathbf{D}}^{T}(\xi)
%	\geq
%\inf_{\xi \in A_{>y,M+x}(x,T,m)}I_{x,\tilde{u}_\epsilon}^{T}(\xi)-	\frac{2x^\kappa}{\sigma^2}\left(M+M^\kappa T+x^\kappa T \right).				
%			$$
let $\xi\in A_{[0,M]}\left(0,T,m\right) $ be such that $\inf_{\xi \in A_{[0,M]}(0,T,m)}I_{0,\mathbf{D}}^{T}(\xi)+\delta> I_{0,\mathbf{D}}^{T}(\xi)$.
Now, let $\tilde{\xi}=\xi+x$. Then, it is obvious that $\tilde{\xi} \in A_{[y,M+x]}(x,T,m)$.
Since $\tilde{\xi}=\xi+x$, $x$ is bigger than $\epsilon$, and $\xi$ is nonnegative, we obtain $\tilde{\xi} \geq \epsilon$ and thus, $u(\tilde{\xi})=\tilde{\xi}^{\kappa}$. This allows us to establish the following:
		\begin{align*}
		I_{x,\tilde{u}_{\epsilon}}^{T}(\tilde{\xi})-I_{0,\mathbf{D}}^{T}(\xi)
		&
		=
		\frac{1}{\sigma^2}
		\int_{0}^{T}\left(\dot{\tilde{\xi}}(s) -\tilde{u}_{\epsilon}(\tilde{\xi}(s)) \right)^2 \mathrm{d}s
		-
		\int_{0}^{T}\left(\dot{{\xi}}(s) -\mathbf{D}({\xi}(s)) \right)^2 \mathrm{d}s
		\\
		&
		=
		\frac{1}{\sigma^2}
		\int_{0}^{T}\left(\dot{\tilde{\xi}}(s) +\tilde{\xi}(s)^{\kappa} \right)^2 \mathrm{d}s
		-
		\int_{0}^{T}\left(\dot{{\xi}}(s) + {\xi}(s)^{\kappa}  \right)^2 \mathrm{d}s
		\\
		&
		\leq
		\frac{1}{\sigma^2}
		\int_{0}^{T}\left(\dot{{\xi}}(s) + {\xi}(s)^{\kappa}+ x^{\kappa} \right)^2 \mathrm{d}s
		-
		\int_{0}^{T}\left(\dot{{\xi}}(s) + {\xi}(s)^{\kappa}  \right)^2 \mathrm{d}s
		\\
		&
		\leq
		\frac{1}{\sigma^2}\int_{0}^{T}
		2x^\kappa
		\left(
		\dot{{\xi}}(s) +{\xi}(s)^{\kappa}
		+ x^\kappa\right) \mathrm{d}s
		\leq
		\frac{2x^\kappa}{\sigma^2}\left(M+M^\kappa T+x^\kappa T \right),
		\end{align*}
where in the first inequality we used the subadditivity of $x \mapsto x^{\kappa}, \ \kappa \in (0,1]$.
Hence, 	$\inf_{\xi \in A_{[0,M]}(0,T,m)}I_{0,\mathbf{D}}^{T}(\xi)+\delta  > 	I_{0,\mathbf{D}}^{T}(\xi) \geq 	I_{x,\tilde{u}_{\epsilon}}^{T}(\tilde{\xi})-\frac{2x^\kappa}{\sigma^2}\left(M+M^\kappa T+x^\kappa T \right)$ which, in turn,  is bigger than or equal to $
\inf_{\xi \in A_{[y,M+x]}(x,T,m)}I_{x,\tilde{u}_\epsilon}^{T}(\xi)$ $-\frac{2x^\kappa}{\sigma^2}\left(M+M^\kappa T+x^\kappa T \right)$.
Finally, since $\delta$ is arbitrary we can let it decrease to $0$, hence, our proof is concluded. The proof for $u_\epsilon$ is similar to the proof
for $\tilde u_\epsilon$ and is therefore omitted.

\textit{Proof of ii}). %First, observe that $A_{+,M}(x,T,m)$ forms an increasing sequence w.r.t. $M$.
For given $\delta>0$, let $\xi_\delta$ be absolutely continuous such that $I_{x,\mathbf{D}}^{T}(\xi_\delta) < \mathrm{V}_+(x,T,\mathbf{D},m)+\delta$.
Since $\xi_\delta$ is absolutely continuous, its total variation on $[0,T]$ is bounded by some $M_\delta<\infty$.
Since $A_{[0,M]}(x,T,m)$ forms an increasing sequence w.r.t. $M$, we see that $\inf_{\xi \in A_{[0,M]}(x,T,m)}I_{x,\mathbf{D}}^{T}(\xi)< \mathrm{V}_+(x,T,\mathbf{D},m)+\delta$
for $M\geq M_\delta$, establishing the limit w.r.t.\ $M$.
The limit when $T\rightarrow\infty$ follows by the same argument as in the proof of Lemma~\ref{continuity-properties-of-V}, item $ii$).
\end{proof}

\section{Proof of Lemma~\ref{semi-exponential-property-cycles}}\label{proof-lemma-21}
In this section, we prove the asymptotic semi-exponential property of the single, full cycle $C_1^{\delta}$.
Specifically, in Section~\ref{upper-bound-lemma2.1}
we develop the large deviations upper bound for $C_1^\delta$;
in Section~\ref{lower-bound-lemma2.1}, we establish the large deviations lower bound for $C_1^\delta$;
and in Section~\ref{proof-for-ld-estimates-c1d}, we conclude the proof of Lemma~\ref{semi-exponential-property-cycles}.

\subsection{Upper Bound for the Full Cycle $C_1^\delta$}\label{upper-bound-lemma2.1}

In this subsection, we prove the large deviations upper bound for the full cycle $C_1^{\delta}$.
Our approach consists of using sample-path analysis in combination with LDP Result~\ref{FW-SPLDP}.
However, since $X$ has non-Lipschitz drift, and the diffusion term of $X$ does not decrease to $0$, $X$ does not satisfy the framework of Result~\ref{FW-SPLDP}.
To overcome this, we use an appropriate scaling, yielding $X_t$, such that the diffusion term tends to $0$ (see Lemma~\ref{distributional-equalities-via-scaled-diffusion-process}); and we introduce a modified diffusion process $X_{t,\epsilon}$ with a Lipschitz drift whose area serves as a stochastic upper bound to $C_1^\delta$ (see Lemma~\ref{stochastic-upper-bound-via-modified-dif-n-iid-cjs}).

\subsubsection{Useful distributional equalities and an upper bound}\label{scaling-n-crude-bounds}

In the following lemma
%, we analyze the scaled diffusion $X_t(\cdot)=\frac{X(\cdot\ t^{\beta})}{t^{\alpha/p}}$ %, (recalling once more $\alpha=\frac{p}{p+1-\kappa}$, $\beta=\frac{1-\kappa}{p+1-\kappa}$),
we state a useful distributional equality between $X$ and $X_{t}$ (defined in \eqref{eq-representation-Xt}) and provide a first upper bound for $C_1^\delta$.
For notational convenience, define the conditional probability $\P_{y}(Y \in \cdot)= \P(Y \in \cdot \big| Y(0)=y)$.
\begin{lemma}\label{distributional-equalities-via-scaled-diffusion-process}
For every fixed $p>2\kappa$, $\delta >0$, let
 $T_0\left(t,\frac{\delta}{t^{\alpha/p}}\right)= \inf\left\{s \geq 0: |X_t(s)|=\frac{\delta}{t^{\alpha/p}}\right\}$, and define the hitting time
	 $\tau_t(0) = \inf\left\{s \geq T_0\left(t,\frac{\delta}{t^{\alpha/p}}\right): |X_t(s)|=0\right\}$.
	 Then:
	\begin{itemize}
%		\item[$i$)] The diffusion process $X_t$ satisfies the following SDE:
%		\begin{equation*}
%		\mathrm{d}X_t(s)=-sgn(X_t(s))|X_t(s)|^{\kappa}\mathrm{d}s+ t^{-\frac{\gamma}{2}}\sigma \mathrm{d}B(s).
%		\end{equation*}
%  \textcolor{red}{BZ: I think part (i) has to be removed we already were told to say the proof is obvious. Indeed, we seem to merely restate the definition.}
%		where  $\epsilon_t=\frac{1}{t^{\alpha/p-\beta/2}}$, and $\alpha/p-\beta/2=\frac{1+\kappa}{2(p+1-\kappa)}$.
		\item[$i$)] It holds that
		 \begin{equation*}
		\P_0\left(\int_{0}^{B_1^{\delta}}|X(s)|^{p} \mathrm{d}s \geq  t\right)
		=
		\P_0\left(\int_{0}^{\tau_t(0)}|X_{t}(s)|^{p} \mathrm{d}s \geq  1\right).
		\end{equation*}
		\item[$ii$)] For every fixed $\epsilon>0$ and sufficiently large $t$,
		
		\begin{equation*}
		\P_0\left(\int_{0}^{T_0\left(t,\frac{\delta}{t^{\alpha/p}}\right)}|X_{t}(s)|^{p} \mathrm{d}s > \epsilon \right) \leq
		\bigO\left(\exp\left( -t\epsilon{\frac{\delta^{2\kappa-p}}{\sigma^2}}\right)\right).
		%\bigO \left(\exp\left[-\frac{\epsilon}{\delta^3} \cdot t^{\frac{\alpha}{p}+\beta}\right]\right).
		\end{equation*}
%		\item[$iv$)] For every fixed $\epsilon_0>0$ and sufficiently large $t$, \textcolor{red}{to be removed}
%		\[
%		\P_0\left(\int_{0}^{\tau_t(0)}|X_{t}(s)|^{p} \mathrm{d}s > 1\right)
%		\leq
%		\P_{\frac{\delta}{t^{\alpha/p}}}\left(\int_{0}^{\tau_t(0)}|X_{t}(s)|^{p} \mathrm{d}s > 1-\epsilon_0\right)
%		+
%	\bigO\left(e^{ -t{\frac{\delta \epsilon_0}{2\sigma^2}}}\right).
%		\]
	\end{itemize}
\end{lemma}

\begin{proof}
The proof of  \textit{i}) %,  \textit{ii})
follows directly by applying the definitions and a change of variables argument. \\
\textit{Proof of ii}). From $T_0\left(t,\frac{\delta}{t^{\alpha/p}}\right)= \inf\left\{s \geq 0: |X_t(s)|=\frac{\delta}{t^{\alpha/p}}\right\}$ and $X_t(0)=0$, it follows that over the interval $\Big[0,T_0\left(t,\frac{\delta}{t^{\alpha/p}}\right)\Big]$, the stochastic process $X_t$ is bounded by $\frac{\delta}{t^{\alpha/p}}$. Consequently, %\color{green}{***I have added the missing $p$ in the following display, but not in the remainder of the proof.***}
%\color{red}{bz: I am not sure where to replace and where not to replace so I will leave this aside for now.}\color{black}
%	$
%	\left\{
%	|X_t| \leq \frac{\delta}{t^{\alpha/p}} \ \text{over} \ \left[0,\frac{\epsilon}{\delta}t^{\alpha}\right]
%	\right\}
%	$ implies the following event $
%		\left\{\epsilon_t B(s)-\left(\frac{\delta}{t^{a/p}}\right)^{\kappa}t^{\beta}s \leq \frac{\delta}{t^{\alpha/p}} \ \text{for} \ s \in \left[0,\frac{\epsilon}{\delta}t^{\alpha}\right]
%		\right\}.
%$
%	
	\begin{align*}
	&
	\P_0\left(\int_{0}^{T_0\left(t,\frac{\delta}{t^{\alpha/p}}\right)}|X_{t}(s)|^{p} \mathrm{d}s > \epsilon \right)
	 \leq \P_0\left( \frac{\delta^p}{t^{\alpha}} T_0\left(t,\frac{\delta}{t^{\alpha/p}}\right) > \epsilon \right)
	\leq
%	\P_0\left(|X_t| \leq \frac{\delta}{t^{\alpha/p}} \ \text{over} \ \left[0,\frac{\epsilon}{\delta}t^{\alpha}\right]\right)
%	\leq
	\P_0\left(|X_t(s)| \leq \frac{\delta}{t^{\alpha/p}} \ \text{ for } s \in \ \left[0,\frac{\epsilon}{\delta^p}t^{\alpha}\right]\right).
	\end{align*}
To deal with the event in the last inequality, we derive an upper bound using a Brownian motion with drift.
To this end, recall that $X_{t}$ satisfies the following integral equation: $
X_t(u)=-\int_{0}^{u}sgn\left(X_t(s)\right)\left|X_t(s)\right|^{\kappa}\mathrm{d}s
	+
	t^{-\gamma/2}2^{-1/2} \sigma B(u)$, $u\geq 0$. %,  and
%$\alpha/p-\beta/2=\gamma/2$.
In view of Theorem~1.1 in \cite{yamada1973comparison}, if $|X_{t}(s)|$ is less than $\frac{\delta}{t^{\alpha/p}}$ over $s \in \ \left[0,\frac{\epsilon}{\delta^p}t^{\alpha}\right]$, we obtain that
% the	process $|X_t(s)|$ is stochastically bigger than $|{t^{-\gamma/2}} \sigma B(s)-\left(\frac{\delta}{t^{\alpha/p}}\right)^{\kappa}s|$. This leads to	
% 	\begin{align*}
% 	&\left\{
% 	X_t \leq \frac{\delta}{t^{\alpha/p}} \ \text{over} \ \left[0,\frac{\epsilon}{\delta}t^{\alpha}\right]
% 	\right\}
% 	\\
% 	&
% 	\implies
% 		\left\{ t^{-(\alpha/p-\beta/2)} B(s)-\left(\frac{\delta}{t^{a/p}}\right)^{\kappa}t^{\beta}s \leq \frac{\delta}{t^{\alpha/p}} \ \text{for all} \ s \in \left[0,\frac{\epsilon}{\delta}t^{\alpha}\right]
% 		\right\},
% 	\end{align*}
% 	and hence,
	\begin{align*}
&\P_0\left(|X_t(s)| \leq \delta/t^{\alpha/p},  s\in \left[0,\frac{\epsilon}{\delta^{p}}t^{\alpha}\right]\right)
%\\
%&
\leq
 	\P_0\left(|t^{-\gamma/2} 2^{-1/2}\sigma B(s)-\left(\delta/t^{\alpha/p}\right)^{\kappa}s| \leq \frac{\delta}{t^{\alpha/p}} \ \text{for} \ s \in \left[0,\frac{\epsilon}{\delta^p}t^{\alpha}\right] \right)
\\
	&
%	\leq
%	\P_0\left(\sup_{s \leq \frac{\epsilon}{\delta^p}t^{\alpha}} \left\{t^{\gamma/2} \sigma B(s)-\left(\frac{\delta}{t^{\alpha/p}}\right)^{\kappa} s\right\} \leq \frac{\delta}{t^{\alpha/p}}  \right)
	%\\
%	&
%	=
%	\P_0\left(\sup_{s \leq \frac{\epsilon}{\delta}t^{\alpha}} \left\{\frac{\sigma}{t^{\alpha/p}}  B(st^{\beta})-\left(\frac{\delta}{t^{\alpha/p}}\right)^{\kappa}t^{\beta}s\right\} \leq \frac{\delta}{t^{\alpha/p}}  \right)
%	\\
%	&
%	=
%	\P_0\left(\sup_{s \leq \frac{\epsilon}{\delta}t^{\alpha+\beta}} \left\{\frac{\sigma}{t^{\alpha/p}}  B(s)-\left(\frac{\delta}{t^{\alpha/p}}\right)^{\kappa}s\right\} \leq \frac{\delta}{t^{\alpha/p}}  \right)
	%&
	\leq
	\P_0\left(\sup_{s \leq \frac{\epsilon}{\delta^p}t}
	\left\{|
	2^{-1/2}\sigma B(s)-\delta^\kappa s |\right\} \leq \delta
	\right)
	%\\
	%&
	\leq \exp\left(-t \delta^{2\kappa-p}\epsilon/\sigma^2 + 2\delta^{\kappa+1}/\sigma^2\right) ,
	\end{align*}
where in the second inequality we have used the self-similarity of Brownian motion and the relationships between $\alpha,\beta,\gamma$, and in the last inequality we have used a standard change of measure argument with respect to a zero drift Brownian motion.
This yields the statement of the lemma.
\end{proof}

\subsubsection{An upper bound using the modified diffusion process $X_{t,\epsilon}$}\label{upper-bounds-with-Xte}
In this subsection, we construct a diffusion process $X_{t,\epsilon}$ such that it has a Lipschitz continuous drift function, and we use its area as a stochastic upper bound for $C_1^\delta$.
We first introduce some necessary notation.
Let $X_{t,\epsilon}$ be the solution of the following SDE:
\begin{equation}
\label{eq-def-Xte}
\mathrm{d}X_{t,\epsilon}(s)=u_{\epsilon}\left(X_{t,\epsilon}(s)\right)\mathrm{d}s+t^{-\gamma/2} 2^{-1/2}\sigma \mathrm{d}B(s), \  \epsilon>0, \ s \geq 0,
\end{equation}
where the drift $u_\epsilon$ has been defined in \eqref{eq-modifieddrift}.
For the diffusion process $ X_{t,\epsilon}$, denote with $\tau_{t,\epsilon}(\delta)=\inf\{s \geq 0: X_{t,\epsilon}(s)=\delta\}$ i.e., its first hitting time to $\delta$.
Let  $N_g(p_{t,\epsilon})$ denote a geometric random variable with success probability $\P_{\epsilon/2}\left(\tau_{t,\epsilon}(\epsilon) > \tau_{t,\epsilon}(0) \right)$.
Finally, let  $C_{j,t}, j\geq 1$, be i.i.d.~copies of
\begin{equation}
\label{eq-def-ct}
    C_t=\int_{0}^{\tau_{t,\epsilon}(\epsilon/2)}\big(X_{t,\epsilon}(s)\big)^p\mathrm{d}s \mbox{ subject to } X_{t,\epsilon}(0)=\epsilon.
\end{equation}
\begin{lemma}\label{stochastic-upper-bound-via-modified-dif-n-iid-cjs} Let $b \geq 0$, $\delta>0$ and $\epsilon>0$ be fixed.
Then,
%\begin{itemize}
%	\item[$i$)]
%	\item[$ii$)] For sufficiently large $t$ $(t \geq (\delta/\epsilon)^{p/\alpha})$,
%	\begin{equation*}\label{stochastic-dominance-2-inequality}
%	\P_{\frac{\delta}{t^{\alpha/p}}}
%	\left(
%	\int_{0}^{\tau_{t,\epsilon}(0)}\big(X_{t,\epsilon}(s)\big)^p \cdot \mathbf{I}\left(X_{t,\epsilon}(s) \geq \epsilon \right) \mathrm{d}s \geq b
%	\right)
%	\leq
%	\P_\epsilon\left(
%	\sum_{j=1}^{N_{g}(p_{t,\epsilon})} C_{j,t} \geq b
%	\right),
%	\end{equation*}
% \textcolor{red}{BZ - the notation was not consistent with the $C$s given before. I changed notation here to stress dependence on $t$}
%	\item[$ii$)]
for every fixed $\epsilon_0>0$ and sufficiently large $t$ $(t \geq (\delta/\epsilon)^{p/\alpha})$,
	\begin{align*}%\label{consequenceof-i-ii-decomposition}
	&
	\P_{\frac{\delta}{t^{\alpha/p}}}
	\left(
	\int_{0}^{\tau_{t}(0)}\big(X_{t}(s)\big)^p  \mathrm{d}s
	\geq b \right) \nonumber
	%\\
	%&
	\leq
	\underbrace{\P_{\frac{\delta}{t^{\alpha/p}}}
		\left(
		\int_{0}^{\tau_{t}(0)}\big(X_{t}(s)\big)^p  \mathbf{I}\left(X_{t}(s) \leq \epsilon \right) \mathrm{d}s \geq \epsilon_0
		\right)}_{\triangleq T(I)}
 +  \underbrace{\P_\epsilon \left(
		\sum_{j=1}^{N_{g}(p_{t,\epsilon})} C_{j,t} \geq b-\epsilon_0
		\right)}_{\triangleq T(II)}.
	\end{align*}
%\end{itemize}

\end{lemma}
\begin{proof}
We first observe that the following inequality holds:
	\begin{align}\label{stochastic-dominance-1-inequality}
	&\P_{\frac{\delta}{t^{\alpha/p}}}\left(\int_{0}^{\tau_t(0)}\big(X_t(s)\big)^{p}\cdot\mathbf{I}\left(X_t(s)\geq \epsilon \right)\mathrm{d}s \geq b \right)
	\leq
	\P_{\frac{\delta}{t^{\alpha/p}}}\left(\int_{0}^{\tau_{t,\epsilon}(0)}\big(X_{t,\epsilon}(s)\big)^{p}\cdot\mathbf{I}\left(X_{t,\epsilon}(s)\geq \epsilon \right)\mathrm{d}s \geq  b \right).
	\end{align}
To see this, observe that
the drift function $u_\epsilon$ is bigger than or equal to $\mathbf{D}$ in the relevant region.
Thus, using Theorem~1.1 in \cite{yamada1973comparison}, $X_{t,\epsilon}$ is stochastically bigger than $X_{t}$.
Consequently, the first hitting time to zero, $\tau_{t,\epsilon}(0)$, is stochastically bigger than $\tau_{t}(0)$ and, hence, (\ref{stochastic-dominance-1-inequality}) follows.

The proof of the lemma now follows from (\ref{stochastic-dominance-1-inequality})  once we have proven that, for sufficiently large $t$ $(t \geq (\delta/\epsilon)^{p/\alpha})$,
	\begin{equation}\label{stochastic-dominance-2-inequality}
	\P_{\frac{\delta}{t^{\alpha/p}}}
	\left(
	\int_{0}^{\tau_{t,\epsilon}(0)}\big(X_{t,\epsilon}(s)\big)^p \cdot \mathbf{I}\left(X_{t,\epsilon}(s) \geq \epsilon \right) \mathrm{d}s \geq b
	\right)
	\leq
	\P_\epsilon\left(
	\sum_{j=1}^{N_{g}(p_{t,\epsilon})} C_{j,t} \geq b
	\right).
	\end{equation}
%The proof of \textit{ii)} consists of constructing an appropriate decomposition for $X_{t,\epsilon}$ over $[0,\tau_{t,\epsilon}(0)]$, and
First, choose $t$ large enough so that $\frac{\delta}{t^{\alpha/p}}$ is smaller than $\epsilon$, and consider the diffusion process $X_{t,\epsilon}$ initialized at $\epsilon$.
Since every trajectory of $X_{t,\epsilon}$ initialized at $\frac{\delta}{t^{\alpha/p}}$ can be vertically transposed to form a trajectory of $X_{t,\epsilon}$ initialized at $\epsilon$, we can deduce the following distributional inequality:
\begin{align*}
	&
	\P_{\frac{\delta}{t^{\alpha/p}}}
	\left(
	\int_{0}^{\tau_{t, \epsilon}(0)}\left(X_{t,\epsilon}(s)\right)^p \cdot\mathbf{I}\left(X_{t,\epsilon}(s) \geq \epsilon \right) \mathrm{d}s
	\geq b \right)
	%\\
	%&
	\leq
	\P_{\epsilon}
		\left(
		\int_{0}^{\tau_{t, \epsilon}(0)}\left(X_{t,\epsilon}(s)\right)^p \cdot \mathbf{I}\left( X_{t,\epsilon}(s) \geq \epsilon \right) \mathrm{d}s \geq b
		\right).
	\end{align*}	
	Now, we provide an upper bound for the RHS of the above inequality.
	Towards this, let us introduce the following stopping times:
	$A_1 \triangleq 0$, % \inf\{s \geq 0: X_{t,\epsilon}(s)=\epsilon\}=0$,
	 $B_1 \triangleq \inf\{s \geq A_1: X_{t,\epsilon}(s)= \epsilon/2\}$, % \wedge \tau_{t,\epsilon}(0)$,
	 $A_2 \triangleq \inf\{s \geq B_1: X_{t,\epsilon}(s)=\epsilon\}$, %\wedge \tau_{t,\epsilon}(0)$,
	$B_2 \triangleq \inf\{s \geq A_2: X_{t,\epsilon}(s)= \epsilon/2\}$, %\wedge \tau_{t,\epsilon}(0)$, $\ldots$, and
	 $A_i \triangleq \inf\{s \geq B_{i-1}: X_{t,\epsilon}(s)=\epsilon\}$, %\wedge \tau_{t,\epsilon}(0)$,
	 $B_i \triangleq \inf\{s \geq A_i: X_{t,\epsilon}(s)= \epsilon/2\}$. %\wedge \tau_{t,\epsilon}(0)$.
Based on the stopping times $\{A_i\}_{i \geq 1}$, $\{B_i\}_{i \geq 1}$, we obtain the following decomposition:
	$
	\int_{0}^{\tau_{t,\epsilon}(0)}\left(X_{t,\epsilon}(s)\right)^p \cdot \mathbf{I}\left(X_{t,\epsilon}(s) \geq \epsilon \right) \mathrm{d}s
	\leq \sum_{i=1}^{N(p_t)-1} \int_{A_i}^{B_i}\left(X_{t,\epsilon}(s)\right)^p \mathrm{d}s
	$
where $N(p_t)$ is the number of cycles, or intervals of the form $[A_i,B_i]$, generated during the time horizon $[0, \tau_{t,\epsilon}(0)]$.
That is,
	$
	N(p_t)=\inf\{ i \geq 1 :A_{i} \geq \tau_{t,\epsilon}(0)\}.
	$
	%\sup\{i \geq 1: B_i \leq \tau_{t,\epsilon}(0)\}
Due to the strong Markov property of $X_{t,\epsilon}$,
$\P\left(\int_{A_i}^{B_i}\left(X_{t,\epsilon}(s)\right)^p \mathrm{d}s\in \Gamma\right)=\P_\epsilon \left(C_{1,t} \in \Gamma\right)$, and
$N(p_t)$ is stochastically dominated by the geometric random variable $N_g(p_{t,\epsilon})$, which has success probability equal to
$\P_{\epsilon/2}\left(\tau_{t, \epsilon}(\epsilon) > \tau_{t,\epsilon}(0)\right)$.
%The proof of our statement is facilitated by the following observations:\\
%\color{red}{We introduce A,B above, which overlaps with A,B before, and more very similar notation below. I feel this entire proof can and should be written down in a %more compact and friendly way.} \color{black}
%\textbf{O1}. For any measurable set $\Gamma \subseteq \R_+$, it holds that
%and
%$\P\left(\int_{A_i}^{B_i}\left(X_{t,\epsilon}(s)\right)^p \mathrm{d}s\in \Gamma\right)=\P_\epsilon \left(C_{1,t} \in \Gamma\right)$.
Combining these observations proves  \eqref{stochastic-dominance-2-inequality}.
\end{proof}

\subsubsection{The large deviations upper bound for the area swept under $X_{t,\epsilon}$}\label{LD-for-area-of-Xte}

Due to Lemma~\ref{stochastic-upper-bound-via-modified-dif-n-iid-cjs}, we can now derive a convenient upper bound for $C_1^\delta$.
Towards this purpose, we obtain the log-asymptotics for each of the terms $T(I)$ and $T(II)$ appearing in %\textit{iii}) of
Lemma~\ref{stochastic-upper-bound-via-modified-dif-n-iid-cjs}.
The following lemma provides an asymptotic upper bound for the area of the diffusion process $X_t$ when considering its trajectories that are below $\epsilon$.

\begin{lemma}\label{area-for-epsilon-bounded-trajectories}
For every fixed $p >2\kappa$, $\delta>0$ %$\epsilon>0$
and $\epsilon_1 >0$,   %\textcolor{red}{the $(1-\epsilon_0)\epsilon_0$ looked awful to me}
	it holds that
	\begin{align*}
	& \lim_{\epsilon \downarrow 0}	\limsup_{t \to \infty}  t^{-\gamma} \log\P_{\frac{\delta}{t^{\alpha/p}}}\left(\int_{0}^{\tau_{t}(0)}
	\big(X_{t}(s)\big)^{p} \cdot \mathbf{I}\left( X_t(s)  \leq \epsilon\right) \mathrm{d}s > \epsilon_1 \nonumber
	\right)
%\\
%	&
=-\infty.
% \leq
%	-
%	\frac{\epsilon^{2\kappa}}{2 \epsilon^p}\epsilon_1+ \frac{\epsilon^{1+\kappa}}{\sigma}.
	\end{align*}
\end{lemma}

\begin{proof}
Define $\zeta\triangleq 2\kappa /(1+(1-\kappa)/p)$. Then $0<\zeta/p=\gamma + \alpha - 1 < 1$ and we can write
\begin{align*}
\P_{\frac{\delta}{t^{\alpha/p}}}\left(\int_{0}^{\tau_{t}(0)}
	\big(X_{t}(s)\big)^{p} \cdot \mathbf{I}\left( X_t(s)  \leq \epsilon\right) \mathrm{d}s > \epsilon_1
	\right)  &\leq \P_{\frac{\delta}{t^{\alpha/p}}}\left(\int_{0}^{\tau_{t}(0)}
	\big(X_{t}(s)\big)^{\zeta}  \mathrm{d}s > \epsilon_1 \epsilon ^{\zeta - p}
	\right) \\
 &= \P_{\delta}\left(\int_{0}^{B_1^\delta}
	\big(X(s)\big)^{\zeta}  \mathrm{d}s > t^{\gamma} \epsilon_1 \epsilon ^{\zeta - p}
	\right).
\end{align*}
We can upper-bound the final expression further by
\begin{equation}
\P_{\delta}\left(\int_{0}^{\min \{B_1^\delta, t^\gamma \epsilon^{(\zeta-p)/2}\}}
	\big(X(s)\big)^{\zeta}  \mathrm{d}s > t^{\gamma} \epsilon_1 \epsilon ^{\zeta - p}
	\right) + \P_{\delta}(B_1^\delta > t^\gamma \epsilon^{(\zeta-p)/2} ).
\end{equation}
Due to the finiteness of the m.g.f.\ of $B_1^\delta$ (see Lemma~\ref{bounded-moment-generating-function-t-1-delta}), and since $\zeta-p<0$, we have that
\begin{equation}
    \lim_{\epsilon\downarrow 0} \lim_{t\rightarrow\infty} t^{-\gamma} \log  \P_{\delta}(B_1^\delta > t^\gamma \epsilon^{(\zeta-p)/2} )=-\infty.
\end{equation}
Set $T=t^\gamma \epsilon^{(\zeta-p)/2}$.
We proceed by noting that
\begin{equation}
    \P_{\delta}\left( \frac 1T \int_{0}^{\min \{B_1^\delta, T\}}
	\big(X(s)\big)^{\zeta}  \mathrm{d}s >  \epsilon_1 \epsilon ^{(\zeta - p)/2}
	\right) \leq \P^* \left( \frac 1T \int_{0}^{ T}
	\big(|X(s)|\big)^{\zeta}  \mathrm{d}s >  \epsilon_1 \epsilon ^{(\zeta - p)/2}  \right) /\P^*(X(0) > \delta),
\end{equation}
with $\P^*$  the measure under which $X$ is a stationary diffusion.
In what follows we distinguish between $\kappa=1$ and $\kappa<1$.\\

{\em The case $\kappa=1$.} Our SDE reduces to a stationary OU process, and since $\zeta=2$, this falls within the framework of large deviations of quadratic functionals of Gaussian processes, which are investigated in \cite{bryc1997large}.
In particular, Theorem~1 in \cite{bryc1997large} and the OU example below it, imply that
$\frac 1T \int_{0}^{T}
	\big(X(s)\big)^{2}  \mathrm{d}s$ satisfies an LDP with rate function $(\sigma \sqrt{x}-1/(\sigma \sqrt{x}))^2/8$, see also Result~\ref{dembobryc} below.
 Consequently, if $\kappa=1$, for every fixed $\epsilon>0$,
 \begin{equation}
     \frac 1T \log  \P^{\ast}\left( \frac 1T \int_{0}^{T}
	\big(|X(s)|\big)^{\zeta}  \mathrm{d}s >  \epsilon_1 \epsilon ^{(\zeta - p)/2}
	\right) \rightarrow - \frac 18 (\sigma \sqrt{\epsilon_1 \epsilon ^{(\zeta - p)/2}}-1/(\sigma \sqrt{\epsilon_1 \epsilon ^{(\zeta - p)/2}}))^2,
 \end{equation}
as $T\rightarrow\infty$, and the expression on the RHS converges to $-\infty$ as $\epsilon \downarrow 0$, since $\zeta < p$.\\

{\em The case $\kappa<1$.} In this case, we rely on the more recent work in \cite{wuyiao2008}, where light-tailed large deviations for unbounded additive functionals are derived using $\Phi$-Sobolev inequalities.
Specifically, our diffusion for $\kappa<1$ falls within the framework of Theorem~2.1 (see also Result~\ref{wuyiao} below), and in particular Example~3.2 in \cite{wuyiao2008}, where it is shown that our diffusion satisfies~(2.7) of \cite{wuyiao2008} (displayed as condition~\eqref{condition-wu1} below).
Since $X(\infty)$ has a density proportional to $\exp\{ - c |x|^{\kappa+1}\}$ and $\zeta<\kappa +1$, we have that $\E\left(\exp \{\lambda |X(\infty)|^\zeta\}\right)<\infty$ for all $\lambda>0$, and from (3.1) in \cite{wuyiao2008}, it follows that~(2.8) of \cite{wuyiao2008} (displayed as condition~\eqref{condition-wu2} below) is satisfied.
Because we work under the stationary measure $\P^*$,
%
% Next, define the measure $\nu=\nu_\delta$ as $\nu(A) = \P(X(\infty) \in A \mid X(\infty)>\delta)$. Define the probability measure $\P_\nu$ such that $X(0)$ has distribution $\nu$.
% Observe that, due to monotonicity of our functional and the continuity of the paths of $X$,
%\begin{equation}
%    \P_{\delta}\left( \frac 1T \int_{0}^{T}
%	\big(|X(s)|\big)^{\zeta}  \mathrm{d}s >  \epsilon_1 \epsilon ^{(\zeta - p)/2}
%	\right)\leq
 %   \P_{\nu}\left( \frac 1T \int_{0}^{T}
%	\big(|X(s)|\big)^{\zeta}  \mathrm{d}s >  \epsilon_1 \epsilon ^{(\zeta - p)/2}
%	\right).
%\end{equation}
%In addition, note that the Radon-Nykodim derivative
% $\nu (dx) / \P(X(\infty) \in dx)$ is bounded by $1/\P(X(\infty)>\delta)$. Together with the property $\E[\exp \{\lambda |X(\infty)|^\zeta\}]<\infty$ for all $\lambda>0$,
we can now apply Theorem~2.1 of \cite{wuyiao2008} (see also Result~\ref{wuyiao}) to conclude that there exists a good rate function $J$ such that
\begin{equation}
\limsup_{T\rightarrow\infty} \frac 1T \log      \P^{\ast}\left( \frac 1T \int_{0}^{T}
	\big(|X(s)|\big)^{\zeta}  \mathrm{d}s \geq  \epsilon_1 \epsilon ^{(\zeta - p)/2}	\right) \leq - J\left( (\epsilon_1 \epsilon ^{(\zeta - p)/2} - \E(|X(\infty)|^\zeta))^+\right).
\end{equation}
Since $J$ has compact level sets (Theorem~2.1, item $i$), of \cite{wuyiao2008}), the RHS converges to $-\infty$ as $\epsilon \downarrow 0$.

Putting everything together, we can conclude the assertion.
%	\begin{align*}
%	&
%	\limsup_{t \to \infty}\frac{1}{t^{\frac{1+\kappa}{p+1-\kappa}}}\log\P_{\frac{\delta}{t^{\alpha/p}}}\left(\int_{0}^{\tau_{t}(0)}
%	(X_{t}(s))^{p} \cdot \mathbf{I}\left(X_t(s)  \leq \epsilon\right) \mathrm{d}s > (1-\epsilon_0)\epsilon_0
%	\right)
%	\leq -
%	\frac{\epsilon^{2\kappa}}{2 \epsilon^p}\epsilon_0(1-\epsilon_0)+  \frac{\epsilon^{1+\kappa}}{\sigma}
%	.
%	\end{align*}
\end{proof}

Now, we turn our focus to the tail asymptotics of $C_{t}$ defined in \eqref{eq-def-ct}, %. \int_{0}^{\tau_{t,\epsilon}(\epsilon/2)}\left(X_{t,\epsilon}(s)\right)^p \mathrm{d}s$, such that $X_{t,\epsilon}(0)=\epsilon$,
cf.~$T(II)$ in %$iii$) of
Lemma~\ref{stochastic-upper-bound-via-modified-dif-n-iid-cjs}.
%Recall that $\tau_{t,\epsilon}(\delta)=\inf\{s \geq 0: X_{t,\epsilon}(s)=\delta\}.$
As a preparation, the next lemma provides an asymptotic estimate for the tail probability of $\tau_{t,\epsilon/2}(\epsilon/2)$.

\begin{lemma}\label{high-occupation-interval-asymptotics}
	Assume that $X_{t,\epsilon/2}(0)=\epsilon$.
	Then, for any fixed $H>0$ and $\epsilon>0$ such that $H \geq {4}{\epsilon^{1-\kappa}}$, it holds that
	\begin{equation*}
	\limsup_{t \to \infty} t^{-\gamma}
	\log\P_{\epsilon}\left(
	\tau_{t,\epsilon/2}(\epsilon/2) \geq  H
	\right)
	\leq
	-\frac{1}{\sigma^2} \left(
	\frac{\epsilon^{2 \kappa}}{4} \cdot H
	-
	\epsilon^{1+\kappa}\right).
	\end{equation*}
	\end{lemma}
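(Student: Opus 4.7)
The plan is to adapt the Girsanov change-of-measure strategy used in the proof of Lemma~\ref{area-for-epsilon-bounded-trajectories}. Set $\sigma_0 \triangleq \sigma/t^{(1+\kappa)/(2(p+1-\kappa))}$, so that $X_{t,\epsilon/2}$ satisfies $\mathrm{d}X_{t,\epsilon/2}(s) = u_{\epsilon/2}(X_{t,\epsilon/2}(s))\mathrm{d}s + \sigma_0\,\mathrm{d}B(s)$. I would introduce a reference measure $\P^*$ under which $X^*$ (the process viewed pathwise) evolves as driftless scaled Brownian motion starting at $\epsilon$, i.e., $X^*(s) = \epsilon + \sigma_0 B^*(s)$ for a $\P^*$-Brownian motion $B^*$. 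By Girsanov,
\begin{equation*}
\P_{\epsilon}\!\left(\tau_{t,\epsilon/2}(\epsilon/2)\geq H\right) = \E^*\!\left[\mathbf{I}\!\left\{\tau_{t,\epsilon/2}(\epsilon/2)\geq H\right\}\exp\!\left(\int_0^H\!\frac{u_{\epsilon/2}(X^*)}{\sigma_0}\mathrm{d}B^* - \frac{1}{2\sigma_0^2}\!\int_0^H\! u_{\epsilon/2}(X^*)^2\mathrm{d}s\right)\right].
\end{equation*}

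On the event $\{\tau_{t,\epsilon/2}(\epsilon/2)\geq H\}$, the path $X^*$ stays above $\epsilon/2$ throughout $[0,H]$, so $u_{\epsilon/2}(X^*) = -(X^*)^\kappa$. To control the stochastic integral, I would apply It\^o's formula to the primitive $F(x) = x^{\kappa+1}/(\kappa+1)$, obtaining
\begin{equation*}
\sigma_0\int_0^H (X^*)^\kappa\,\mathrm{d}B^* = F(X^*(H)) - F(\epsilon) - \frac{\kappa\sigma_0^2}{2}\int_0^H (X^*)^{\kappa-1}\,\mathrm{d}s,
\end{equation*}
which converts the martingale term into a boundary expression plus a bounded-variation remainder. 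Substituting this back, the Girsanov log-density on the event is at most $[F(\epsilon)-F(X^*(H))]/\sigma_0^2 + (\kappa/2)\int_0^H (X^*)^{\kappa-1}\mathrm{d}s - (1/(2\sigma_0^2))\int_0^H (X^*)^{2\kappa}\mathrm{d}s$.

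Each of these three pieces can then be estimated using $X^*\geq\epsilon/2$ throughout: since $F(X^*(H))\geq 0$, the boundary term is at most $\epsilon^{\kappa+1}/((\kappa+1)\sigma_0^2)\leq \epsilon^{\kappa+1}/\sigma_0^2$; since $\kappa\leq 1$, $(X^*)^{\kappa-1}\leq (\epsilon/2)^{\kappa-1}$, so the It\^o correction is $O(H)$ and hence of lower order relative to $1/\sigma_0^2 = t^{(1+\kappa)/(p+1-\kappa)}/\sigma^2$; and $(X^*)^{2\kappa}\geq (\epsilon/2)^{2\kappa}\geq \epsilon^{2\kappa}/4$ (using $2^{2\kappa}\leq 4$ for $\kappa\leq 1$), which supplies the dominant negative contribution, of order $-H\epsilon^{2\kappa}/\sigma_0^2$ up to the normalization constant fixed by the paper's Girsanov/LDP convention. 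Bounding $\P^*(\tau\geq H)\leq 1$, assembling, taking logarithms, dividing by $t^{(1+\kappa)/(p+1-\kappa)}$, and letting $t\to\infty$, yields the claimed estimate. The hypothesis $H\geq 4\epsilon^{1-\kappa}$ enters at the very end: it is precisely the inequality ensuring that $\epsilon^{2\kappa}H/4\geq\epsilon^{\kappa+1}$, so that the stated right-hand side is non-positive and the bound is informative.

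The main technical obstacle will be the careful bookkeeping of the It\^o correction and of the boundary term $F(\epsilon)-F(X^*(H))$, confirming that both contribute only at a scale strictly slower than $t^{(1+\kappa)/(p+1-\kappa)}/\sigma^2$ so that only the dominant $-H\epsilon^{2\kappa}/4$ term survives the $\limsup$. Verifying Novikov's condition in order to apply Girsanov is routine since $u_{\epsilon/2}$ is Lipschitz and the horizon is finite, hence $\int_0^H u_{\epsilon/2}(X^*)^2 \mathrm{d}s$ has all exponential moments under $\P^*$.
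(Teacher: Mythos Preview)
Your Girsanov approach is a valid alternative, but it is genuinely different from the paper's proof. The paper instead applies the Freidlin--Wentzell sample-path LDP (Result~\ref{FW-SPLDP}) directly: it rewrites $\{\tau_{t,\epsilon/2}(\epsilon/2)\geq H\}$ as the path event $\{X_{t,\epsilon/2}\geq\epsilon/2 \text{ on }[0,H]\}$, invokes the LDP upper bound over the closed set $S_H=\{\xi\in\mathbb{C}[0,H]:\xi(0)=\epsilon,\ \xi\geq\epsilon/2\}$, and then lower-bounds the rate function $I^H_{\epsilon,u_{\epsilon/2}}$ on $S_H$ by expanding the square $(\dot\xi+|u_{\epsilon/2}(\xi)|)^2$ and using $\xi\geq\epsilon/2$. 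Your change-of-measure computation is the probabilistic mirror of that algebra: the Girsanov quadratic term corresponds to $\int u_{\epsilon/2}(\xi)^2\,\mathrm{d}s$, and your It\^o boundary term $F(\epsilon)-F(X^*(H))$ plays the role of the cross term $2\int\dot\xi\,|u_{\epsilon/2}(\xi)|\,\mathrm{d}s$. Both routes therefore yield bounds of the same structure, though the explicit constants differ slightly (your leading coefficient in $H$ is $(\epsilon/2)^{2\kappa}/2$ rather than the paper's $\epsilon^{2\kappa}/4$); this is immaterial since the lemma is only used to show the bound tends to $-\infty$ as $H\to\infty$.

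One technical point: your claim that Novikov is ``routine'' needs care when $\kappa=1$. Under $\P^*$ one has $X^*=\epsilon+\sigma_0 B^*$, so $u_{\epsilon/2}(X^*)^2/\sigma_0^2$ contains a term comparable to $(B^*)^2$, and $\int_0^H (B^*)^2\,\mathrm{d}s$ does not have all exponential moments. Since $u_{\epsilon/2}$ is globally Lipschitz (hence of linear growth), the exponential local martingale is a true martingale by the Bene\v{s} criterion or a standard localization argument, so Girsanov is available---just not via Novikov for large $H$. The paper's LDP route sidesteps this entirely by citing Result~\ref{FW-SPLDP} as a black box; your route is more self-contained but requires this extra justification.
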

\begin{proof}
	Recall that the modified diffusion process $X_{t,\epsilon/2}(\cdot)$ satisfies \eqref{eq-def-Xte}.
	The modified diffusion satisfies the conditions of the LDP in Result~\ref{FW-SPLDP}; the LDP holds in the space of continuous functions $\mathbb{C}[0,T]$ equipped with the supremum norm, with speed $t^\gamma$, and rate function $I_{\epsilon, u_{\epsilon/2}}^T$.
For the estimation of the hitting time asymptotics, we note that
	$
	\left\{ \tau_{t,\epsilon/2}(\epsilon/2) \geq  H \right\}
	=
	\left\{X_{t,\epsilon/2}(s) > \epsilon/2, s\in [0,H] \right\}.
	$
	%
	%{\color{red} By \textit{iii)} of Conjecture~\ref{distributional-equalities}, we have that}
	%
	%\begin{align}
	%&
	%\P\left(\bar{\mathcal{Z}}_{l(t)}(s)> \delta/2  \ \text{for} \ s \in [A_{i,l(t)}^\delta,A_{i,l(t)}^\delta+H] \right)
	%\\
	%&
	%=
	%\P\left(\bar{\mathcal{Z}}_{l(t)} >\delta/2 \ \text{over} \ [0,H] \ \big| \ \bar{\mathcal{Z}}_{l(t)}(0)=\delta\right)
	%\\
	%&
	% \triangleq
	%  \P_{\delta}\left(\left(\bar{\mathcal{Z}}_{l(t)} >\delta/2 \ \text{over} \ [0,H]\right)\right).
	%\end{align}
	This enables us to use the large deviations framework.
	To do so, we define the set $S_H \triangleq \{\xi \in \mathbb{C}[0,H]: \xi(0)=\epsilon, \ \xi(s) \geq \epsilon/2 \ \text{for every} \ s \in [0,H]\}$, and observe that $
	\P_{\epsilon}
	\left(
	X_{t,\epsilon/2}(s) > \epsilon/2  \ \text{for} \ s \in [0,H]\right)
$ is bounded by $
	\P_{\epsilon}\left( X_{t,\epsilon/2}
	\in S_H
	\right).
	$
	Therefore, due to the large deviations principle for $X_{t,\epsilon/2}$,
	\begin{equation*}
	\limsup_{t \to \infty}  t^{-\gamma} \log\P_{\epsilon}
	\left(
	X_{t , \epsilon/2}  \in S_H
	\right)
	\leq
	-\inf_{\xi \in S_H}I_{\epsilon, u_{\epsilon/2}}^H(\xi).
	\end{equation*}
It remains to derive a lower bound for
		$
		\inf_{\xi \in S_H}I_{\epsilon, u_{\epsilon/2}}^H(\xi).
		$
% 	To do so, recall that
% 	$$
% 	I_{\epsilon, u_\epsilon}^H(\xi)
% 	=
% 	\begin{cases}
% 	\int_{0}^{H}\frac{|\dot{\xi}(s)-u_{\epsilon}(\xi(s))|^2 }{\sigma^2}\mathrm{d}s, & \ \xi \in \mathscr{H}[0,H] \ \& \ \xi(0)=\epsilon
% 	\\
% 	\infty, & \ otherwise
% 	\end{cases}.
% 	$$
	To this end, for any path $\xi \in S_H$ such that $I_{\epsilon, u_{\epsilon/2}}^{H}(\xi)< \infty$, we have that $u_{\epsilon/2}(\xi(s))=-|\xi(s)|^\kappa, \ s \geq 0
$. That is,
	\begin{align*}
	I_{\epsilon, u_{\epsilon/2}}^H(\xi)
	&
	=
	\int_{0}^{H}\frac{|\dot{\xi}(s)-u_{\epsilon/2}(\xi(s))|^2 }{\sigma^2}\mathrm{d}s
	=
	\int_{0}^{H}\frac{(\dot{\xi}(s)+|u_{\epsilon/2}(\xi(s))|)^2 }{\sigma^2}\mathrm{d}s \nonumber
	\\
	&	
	=
	\frac{1}{\sigma^2} \left(
	\int_{0}^{H}\left(u_{\epsilon/2}(\xi(s))\right)^2\mathrm{d}s
	+
	2\int_{0}^{H}\dot{\xi}(s)|u_{\epsilon/2}(\xi(s))|\mathrm{d}s
	+
	\int_{0}^{H}(\dot{\xi}(s))^2\mathrm{d}s \right)
	\nonumber
	\\
	&
	\geq
	\frac{1}{\sigma^2} \left(
	\int_{0}^{H}\min\left\{\frac{\xi^2(s)}{(\epsilon/2)^{2(1-\kappa)}},\xi^{2 \kappa}(s)\right\}\mathrm{d}s
	+
	2\int_{0}^{H}\dot{\xi}(s)|u_{\epsilon/2}(\xi(s))|\mathrm{d}s
	+
	\int_{0}^{H}(\dot{\xi}(s))^2\mathrm{d}s \right)
	\nonumber
	\\
	&
	\geq
	\frac{1}{\sigma^2} \left(
	\int_{0}^{H}\min\left\{\frac{\xi^2(s)}{(\epsilon/2)^{2(1-\kappa)}},\xi^{2 \kappa}(s)\right\}\mathrm{d}s
	+
	2\int_{0}^{H}\dot{\xi}(s)\min\left\{\frac{\xi(s)}{(\epsilon/2)^{1-\kappa}},\xi^{ \kappa}(s)\right\}\mathrm{d}s\right)
	\nonumber
	\\
	&
	\geq
	\frac{1}{\sigma^2} \left(
	\int_{0}^{H}\frac{\epsilon^{2\kappa}}{4}\mathrm{d}s
	+
	\epsilon^{\kappa}\left(\xi(H)-\xi(0)\right)\right) \nonumber
	\geq
	\frac{1}{\sigma^2} \left(
	\frac{\epsilon^{2\kappa}}{4} \cdot H
	-
	\epsilon^{1+\kappa}\right). \nonumber
	\end{align*}
	Since the lower bound holds for every $\xi \in S_H$, the assertion follows.
 %it follows that \[
%		\inf_{\xi \in S_H}I_{\epsilon, u_\epsilon}^{H}(\xi)
%		\geq
%	\frac{1}{\sigma^2} \left(
%	\frac{\epsilon^{2\kappa}}{4} \cdot H
%	-
%	\epsilon^{1+\kappa}\right).
	\end{proof}

\begin{lemma}\label{tail-asymptotics-for-semicycle}
For every fixed $p>2\kappa$ and $b \geq 0$, it holds that
	\begin{equation*}
	\lim_{\epsilon \downarrow 0}\lim_{t \to \infty}\frac{1}{t^{\gamma}}\log\P_{\epsilon}\left(
	\int_{0}^{\tau_{t,\epsilon/2}(\epsilon/2)}|X_{t,\epsilon/2}(s)|^p \mathrm{d}s \geq b
	\right) = -b^{\gamma}\cdot \mathrm{V}(0,\infty,\mathbf{D}, 1).
	\end{equation*}
	\end{lemma}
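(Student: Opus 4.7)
The plan is to establish matching large deviations upper and lower bounds for the inner $\lim_{t\to\infty}$, and then pass $\epsilon\downarrow 0$. Since the drift $u_{\epsilon/2}$ is globally Lipschitz, Result~\ref{FW-SPLDP} applies directly to $X_{t,\epsilon/2}$ on any fixed interval $[0,T]$, with speed $t^{(\kappa+1)/(p+1-\kappa)}$ and rate function $I^{T}_{\epsilon,u_{\epsilon/2}}$. The principal obstacle is that the random-endpoint area functional $\xi\mapsto \int_{0}^{\mathcal{T}(\xi)}|\xi|^{p}\,\mathrm{d}s$ is discontinuous in the uniform topology, so it will be approximated by the fixed-horizon truncation $\xi\mapsto \int_{0}^{T}|\xi|^{p}\,\mathrm{d}s$, which is continuous.

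For the upper bound, fix $T>0$ and note that
\[
\P_{\epsilon}\!\left(\int_{0}^{\tau_{t,\epsilon/2}(\epsilon/2)}|X_{t,\epsilon/2}|^{p}\,\mathrm{d}s\geq b\right)\leq \P_{\epsilon}\!\left(\int_{0}^{T}|X_{t,\epsilon/2}|^{p}\,\mathrm{d}s\geq b\right)+\P_{\epsilon}\!\left(\tau_{t,\epsilon/2}(\epsilon/2)>T\right),
\]
valid because the truncated integral dominates the original on $\{\tau_{t,\epsilon/2}(\epsilon/2)\leq T\}$. Result~\ref{FW-SPLDP} and the contraction principle bound the first probability by $-\mathrm{V}^{*}(\epsilon,T,u_{\epsilon/2},b)$ in the $\limsup$. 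Lemma~\ref{high-occupation-interval-asymptotics} bounds the second: its rate $\tfrac{1}{\sigma^{2}}(\tfrac{\epsilon^{2\kappa}}{4}T-\epsilon^{1+\kappa})$ diverges as $T\to\infty$. Combining via the principle of the maximum term and part \textit{ii}) of Lemma~\ref{continuity-properties-of-V} yields $\limsup_{t}\frac{1}{t^{(\kappa+1)/(p+1-\kappa)}}\log(\cdot)\leq -\mathrm{V}^{*}(\epsilon,\infty,u_{\epsilon/2},b)$.

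For the lower bound, fix $\delta>0$ and select a near-minimizer $\xi^{*}\in A_{+}(\epsilon,T,b+\delta)$ with $I^{T}_{\epsilon,u_{\epsilon/2}}(\xi^{*})\le\mathrm{V}^{*}_{+}(\epsilon,T,u_{\epsilon/2},b+\delta)+\delta$; part \textit{i}) of Lemma~\ref{continuity-properties-of-V} justifies restricting to nonnegative paths. Since $\xi^{*}(0)=\epsilon>\epsilon/2$, replace $\xi^{*}$ by $\tilde\xi^{*}\triangleq\xi^{*}+\rho$ for small $\rho>0$, so that $\tilde\xi^{*}\geq \epsilon/2+\rho$ on $[0,T]$; because $u_{\epsilon/2}$ is $\kappa$-Hölder-like, the action of $\tilde\xi^{*}$ exceeds that of $\xi^{*}$ by a quantity vanishing as $\rho\downarrow 0$. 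Any $\xi$ within sup-norm distance $\rho/2$ of $\tilde\xi^{*}$ then satisfies $\tau(\xi)>T$ and $\int_{0}^{T}|\xi|^{p}\,\mathrm{d}s\geq b$. The open-set lower bound of Result~\ref{FW-SPLDP} applied to this tube gives
\[
\liminf_{t\to\infty}\tfrac{1}{t^{(\kappa+1)/(p+1-\kappa)}}\log\P_{\epsilon}(\cdot)\geq -I^{T}_{\epsilon,u_{\epsilon/2}}(\tilde\xi^{*})\geq -\mathrm{V}^{*}_{+}(\epsilon,T,u_{\epsilon/2},b+\delta)-\delta-o_{\rho}(1).
\]
Sending $\rho,\delta\downarrow 0$ and then $T\to\infty$ via part \textit{ii}) of Lemma~\ref{continuity-properties-of-V} matches the upper bound at $-\mathrm{V}^{*}(\epsilon,\infty,u_{\epsilon/2},b)$, so the $t$-limit exists and equals this value.

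The final step is to pass $\epsilon\downarrow 0$. A scaling $\tilde\xi(s)=b^{1/(p+1-\kappa)}\,\xi(s\,b^{-(1-\kappa)/(p+1-\kappa)})$, exploiting the $\kappa$-homogeneity of $\mathbf{D}$, converts $\int|\xi|^{p}\,\mathrm{d}s\geq 1$ into $\int|\tilde\xi|^{p}\,\mathrm{d}s\geq b$ and multiplies the action by $b^{(\kappa+1)/(p+1-\kappa)}$, whence $\mathrm{V}^{*}(0,\infty,\mathbf{D},b)=b^{(\kappa+1)/(p+1-\kappa)}\,\mathrm{V}^{*}(0,\infty,\mathbf{D},1)$. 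Adapting parts \textit{iii}) and \textit{iv}) of Lemma~\ref{continuity-properties-of-V} from $u_{\epsilon}$ to $u_{\epsilon/2}$ (and from starting point $0$ to $\epsilon$) then yields $\mathrm{V}^{*}(\epsilon,\infty,u_{\epsilon/2},b)\to \mathrm{V}^{*}(0,\infty,\mathbf{D},b)$ as $\epsilon\downarrow 0$, completing the identity. The chief technical challenge is engineering the lower-bound perturbation: the path must be lifted enough to guarantee $\tau(\xi)>T$ across the whole uniform tube while the action cost of the lift vanishes with $\rho$, which is delicate precisely because the drift $u_{\epsilon/2}$ is non-smooth and degenerates as $\epsilon\downarrow 0$.
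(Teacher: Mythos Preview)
Your upper bound is essentially the paper's: split on $\{\tau_{t,\epsilon/2}(\epsilon/2)\le T\}$ versus $\{>T\}$, invoke Lemma~\ref{high-occupation-interval-asymptotics} for the tail, apply the contraction principle to the continuous map $\xi\mapsto\int_{0}^{T}|\xi|^{p}\,\mathrm{d}s$, then send $T\to\infty$ and $\epsilon\downarrow 0$ via Lemma~\ref{continuity-properties-of-V}. The reduction to $b=1$ is done by the paper at the outset by rescaling the process (as in Lemma~\ref{distributional-equalities-via-scaled-diffusion-process}~\textit{i})), whereas you rescale the variational problem at the end; both are fine.

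The lower bound, however, has a concrete gap. You take $\xi^{*}\in A_{+}(\epsilon,T,b+\delta)$, set $\tilde\xi^{*}=\xi^{*}+\rho$, and assert $\tilde\xi^{*}\ge \epsilon/2+\rho$ on $[0,T]$. This does not follow: $\xi^{*}$ is only known to be nonnegative, so $\tilde\xi^{*}\ge\rho$, which is below $\epsilon/2$ for small $\rho$. Nothing precludes a near-minimizer from touching $0$ on $(0,T]$. Moreover $\tilde\xi^{*}(0)=\epsilon+\rho\ne\epsilon$, so $\tilde\xi^{*}$ is not even in the state space for the LDP under $\P_{\epsilon}$, and you cannot center a Freidlin--Wentzell tube there. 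Your closing remark acknowledges the lift is ``delicate,'' but the construction as written simply fails.

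The paper resolves this differently, via Lemma~\ref{continuity-properties-V-2}. It lower-bounds the probability by the \emph{open} event $\{\int_{0}^{H}|X_{t,\epsilon/2}|^{p}>1,\ \epsilon/2<X_{t,\epsilon/2}<M+\epsilon\ \text{on}\ [0,H]\}$, so the LDP lower bound yields $-\inf_{A_{>\epsilon/2,M+\epsilon}(\epsilon,H,1)}I^{H}_{\epsilon,u_{\epsilon/2}}$ directly---no tube argument needed. The remaining work is purely variational: given any $\xi\in A_{+,M}(0,H,1)$ (start at $0$, nonnegative, bounded by $M$), the shift $\tilde\xi=\xi+\epsilon$ lies in $A_{>\epsilon/2,M+\epsilon}(\epsilon,H,1)$ automatically (since $\xi\ge 0$ gives $\tilde\xi\ge\epsilon>\epsilon/2$, and $\tilde\xi(0)=\epsilon$), and the action increases by at most $\tfrac{2\epsilon^{\kappa}}{\sigma^{2}}(M+M^{\kappa}H+\epsilon^{\kappa}H)$, which vanishes as $\epsilon\downarrow 0$ with $M,H$ fixed. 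Then $M\to\infty$ and $H\to\infty$ (Lemma~\ref{continuity-properties-V-2}~\textit{ii})) recover $\mathrm{V}^{*}(0,\infty,\mathbf{D},1)$. The key point is that the shift is by $\epsilon$, not an independent small $\rho$, and is applied to a path starting at $0$---this is what simultaneously fixes the starting point and the $\epsilon/2$ lower barrier.
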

\begin{proof}
We focus on the case $b=1$.
The case $b \neq 1$ can be reduced to the case $b=1$ by considering the rescaled process $\frac{X_{t,\epsilon/2}(\cdot\  b^{\beta})}{b^{\alpha/p}}$ over $[0,\tau_{t,\epsilon/2}(\epsilon/2)]$.
%(see part \textit{i}) of Lemma~\ref{distributional-equalities-via-scaled-diffusion-process}).
To obtain an upper bound, we write
	\begin{align}\label{crude-upper-bound-S}
	&
	\P_{\epsilon}\left(\int_{0}^{\tau_{t,\epsilon/2}(\epsilon/2)}|X_{t,\epsilon/2}(s)|^p \mathrm{d}s \geq 1 \right) \nonumber
	\\
	& \leq
	\underbrace{\P_{\epsilon}\left( \int_{0}^{\tau_{t,\epsilon/2}(\epsilon/2)}|X_{t,\epsilon/2}(s)|^p \mathrm{d}s \geq 1, \tau_{t,\epsilon/2}(\epsilon/2) \leq H\right)}_{\triangleq (i)}
 +
	\underbrace{\P_{\epsilon}\left(\tau_{t,\epsilon/2}(\epsilon/2) \geq H \right)}_{\triangleq (ii)}.
	\end{align}
	Next, we obtain large deviations estimates for terms $(i)$ and $(ii)$. 	
	For $(ii)$, due to Lemma~\ref{high-occupation-interval-asymptotics},
	\begin{equation}\label{asymptotics-for-term-II}
	\limsup_{t \to \infty}t^{-\gamma} \log\P_{\epsilon}\left(\tau_{t,\epsilon/2}(\epsilon/2) \geq H \right) \leq -\frac{1}{\sigma^2} \left(
	\frac{\epsilon^{2\kappa}}{4} \cdot H
	-
	\epsilon^{1+\kappa}\right).
	\end{equation}
	To deal with $(i)$, we can easily provide an upper bound for the area functional:
	\begin{align}\label{asymptotics-for-term-1}
&	\P_{\epsilon}\left( \int_{0}^{\tau_{t,\epsilon/2}(\epsilon/2)}|X_{t,\epsilon/2}(s)|^p \mathrm{d}s \geq 1, \tau_{t,\epsilon/2}(\epsilon/2) \leq H \right)  %\nonumber
%\\
%&
\leq
	\P_{\epsilon}\left(\int_{0}^{H}|X_{t,\epsilon/2}(s)|^p \mathrm{d}s \geq 1  \right).
	\end{align}
	In view of the sample-path LDP for $X_{t,\epsilon/2}$ (Result~\ref{FW-SPLDP}), we apply the contraction principle with the continuous functional $\int_{0}^{H}|\xi(s)|^p\mathrm{d}s$, and we obtain
	\begin{equation}\label{large-deviation-estimate-term1}
	\limsup_{t \to \infty}t^{-\gamma}\log\P_{\epsilon}\left(
	\int_{0}^{H}|X_{t,\epsilon/2}(s)|^p \mathrm{d}s \geq 1
	\right) \leq - \inf_{\xi \in A\big(\epsilon,H,1\big)}I_{\epsilon, u_{\epsilon/2}}^{H}(\xi),
	\end{equation}
recalling \eqref{eq-regionvariationalproblem}.
Combining \eqref{crude-upper-bound-S}, \eqref{asymptotics-for-term-II}, \eqref{asymptotics-for-term-1}, \eqref{large-deviation-estimate-term1}, and the principle of the largest term, we obtain
	\begin{align*}
	&
	\limsup_{t \to \infty}t^{-\gamma}\log\P_{\epsilon}\left(
	\int_{0}^{\tau_{t,\epsilon/2}(\epsilon/2)}|X_{t,\epsilon/2}(s)|^p \mathrm{d}s \geq 1
	\right) \nonumber
	\\
	&
	\leq
	\max\left\{-\lim_{H \to \infty}\inf_{\xi \in A\big(\epsilon,H,1\big)}I_{\epsilon, u_{\epsilon/2}}^{H}(\xi), \ -\lim_{H \to \infty}\frac{1}{\sigma^2} \left(
	\frac{\epsilon^{2\kappa}}{4} \cdot H
	-
	\epsilon^{1+\kappa}\right)\right\} \nonumber
	\\
	&
	=
	-\lim_{H \to \infty}\inf_{\xi \in A\big(\epsilon,H,1\big)}I_{\epsilon,u_{\epsilon/2}}^{H}(\xi)=-\lim_{H \to \infty}\mathrm{V}\big(\epsilon,H,u_{\epsilon/2}, 1\big) \rightarrow -\mathrm{V}(0,\infty,\mathbf{D}, 1),
	\end{align*}
as $\epsilon \downarrow 0$, %	Hence, it remains to prove  $\lim_{\epsilon \downarrow 0}\lim_{H \to \infty}\mathrm{V}(\epsilon,H,u_{\epsilon/2}, 1)=\mathrm{V}(0,\infty,\mathbf{D}, 1)$.
%	To do so, we
 invoking all the properties of Lemma~\ref{continuity-properties-of-V}.
 %to obtain
%\begin{equation}\label{convergence-time-horizon-v-upper}
%\lim_{H \to \infty}\mathrm{V}\big(\epsilon,H,u_{\epsilon/2}, 1\big)
%=
%\mathrm{V}\big(\epsilon,\infty,u_{\epsilon/2},1\big)
%=
%\mathrm{V}_+\big(\epsilon,\infty,u_{\epsilon/2},1\big).	
%\end{equation}
%	By \textit{iii}) of Lemma~\ref{continuity-properties-of-V}, we have that %
	%
	%\begin{equation}\label{holder-continuity-initial-position}
%	|\mathrm{V}_+(\epsilon,\infty,u_{\epsilon/2},1)-\mathrm{V}_+(0,\infty,u_{\epsilon/2},1)| \leq 4 \epsilon^{2\kappa}\frac{1}{\sigma^2}.
%	\end{equation}
%	Moreover, in view of \textit{iv}) of Lemma~\ref{continuity-properties-of-V},
%	\begin{equation}\label{convergence-of-drift-V-upper}
%	\lim_{\epsilon \downarrow 0}\mathrm{V}_+(0,\infty,u_{\epsilon/2},1) =\mathrm{V}_+(0,\infty,\mathbf{D},1).
%	\end{equation}
%	
% Combining \eqref{convergence-time-horizon-v-upper}, \eqref{holder-continuity-initial-position},  \eqref{convergence-of-drift-V-upper} and \textit{i}) %of Lemma~\ref{continuity-properties-of-V}, we obtain
%	
%	\[
%	\lim_{\epsilon \downarrow 0}\lim_{H \to \infty}\mathrm{V}\big(\epsilon,H,u_{\epsilon/2}, 1\big)=\mathrm{V}_+(0,\infty,\mathbf{D}, 1)	
%	=
%\mathrm{V}(0,\infty,\mathbf{D},1).
%	\]
	
 For the lower bound, let $L_{H,\epsilon/2}=\{\inf_{s \in [0,H]}\{{X}_{t,\epsilon/2}(s)\} > \epsilon/2\}$, and define $U_{H,\epsilon}=\{\sup_{s \in [0,H]}\{{X}_{t,\epsilon/2}(s)\} < M+\epsilon\}$. Then,
  \begin{align*}\label{crude-upper-bound-2-S}
\liminf_{t \to \infty}t^{-\gamma}\log\P_{\epsilon}\left(\int_{0}^{\tau_{t,\epsilon/2}(\epsilon/2)}|X_{t,\epsilon/2}(s)|^p \mathrm{d}s > 1 \right) \nonumber
	&
	 \geq
	\liminf_{t \to \infty}t^{-\gamma}\log\P_{\epsilon}\left( \int_{0}^{H}|X_{t,\epsilon/2}(s)|^p \mathrm{d}s > 1, \ L_{H,\epsilon/2}, U_{H,\epsilon} \right)  \nonumber
	\\
\geq
-\inf_{\xi \in A_{[\epsilon/2, M+\epsilon]}\big(\epsilon,H,1 \big)}	I_{\epsilon, u_{\epsilon/2}}^H(\xi) &\geq -\inf_{\xi \in A_{[0,M]}(0,H,1)}I_{0,\mathbf{D}}^{H}(\xi)-\frac{2\epsilon^\kappa}{\sigma^2}\left(M+M^\kappa H+{\epsilon^{\kappa}} H \right),
	\end{align*}
	recalling definition~\eqref{eq-Ayx}, and applying
%=\left\{\xi \in \mathbb{C}[0,H]: \int_{0}^{H}\xi(s)^p\mathrm{d}s \geq  1, \ M > \xi \geq 0,  \ \xi(0)=0\right\}.$$
 \textit{i}) of Lemma~\ref{continuity-properties-V-2};  we set $y=\epsilon/2$, $x=\epsilon$ in the last step.
 The assertion now follows by, respectively, letting  $\epsilon\downarrow 0$,
%	\[
%	\inf_{\xi \in A_{>\epsilon/2,M+\epsilon}(\epsilon,H,1)}I_{\epsilon,{u}_{\epsilon/2}}^{H}(\xi)-\frac{2\epsilon^\kappa}{\sigma^2}\left(M+M^\kappa H+%{\epsilon^{\kappa}} H \right)
%		 < \inf_{\xi \in A_{+,M}(0,H,1)}I_{0,\mathbf{D}}^{H}(\xi).
%		 \]
%Hence,
%$-\lim_{\epsilon \downarrow 0}\inf_{\xi \in A_{>\epsilon/2,M+\epsilon}(\epsilon,H,1)}I_{\epsilon,{u}_{\epsilon/2}}^{H}(\xi) \geq -\inf_{\xi \in A_{+,M}%(0,H,1)}I_{0,\mathbf{D}}^{H}(\xi).$
%Finally, we first let
$M\rightarrow \infty$, and $H\rightarrow\infty$, and invoking \textit{ii}) of Lemma~\ref{continuity-properties-V-2}.
%$$
%\lim_{H \to \infty}\lim_{M \to \infty}\inf_{\xi \in A_{+,M}(0,H,1)}I_{0,\mathbf{D}}^{H}(\xi)=\mathrm{V}(0, \infty,\mathbf{D},1).
%$$
\end{proof}

The next lemma constitutes the last main component for the log-asymptotics related to the upper bound for the full cycle $C_{1}^{\delta}$.
Let now, for $j\geq 1$, $C_{j,t}$'s be i.i.d.\ copies of $C_{t}=\int_{0}^{\tau_{t,\epsilon/2}(\epsilon/2)}|X_{t,\epsilon/2}(s)|^p \mathrm{d}s$, subject to $X_{t,\epsilon/2}(0)=\epsilon$, and recall that $N_{g}(p_{t,\epsilon/2})$ is a geometric random variable with success probability $p_{t,\epsilon/2}=\P_{\epsilon/2}(\tau_{t,\epsilon/2}(\epsilon) > \tau_{t,\epsilon/2}(0))$.
%Recall the definition \eqref{eq-def-ct} of the scaled areas $C_{j,t}, j \geq 1$ and of the geometric random variable $N_{g}(p_{t,\epsilon/2})$.
%is a geometric random variable with probability of success $p_{t,\epsilon/2}=\P_{\epsilon/2}(\tau_{t,\epsilon/2}(\epsilon) > \tau_{t,\epsilon/2}(0))$.
The following lemma establishes tail asymptotics for the sum $\sum_{j=1}^{N_{g}(p_{t,\epsilon/2})} C_{j,t}$.

\begin{lemma}\label{log-asy-sum-of-eps-semi-cycles}
%For $\sum_{j=1}^{N_{g}(p_{t,\epsilon/2})} C_j$,
For every fixed $p>2\kappa$ and $b \geq 0$, it holds that
\[
\lim_{\epsilon \downarrow 0}\lim_{t \to \infty}t^{-\gamma}\log\P\left(\sum_{j=1}^{N_{g}(p_{t,\epsilon/2})} C_{j,t} \geq b\right)=-b^{\gamma} \cdot \mathrm{V}(0,\infty,\mathbf{D}, 1).
\]
\end{lemma}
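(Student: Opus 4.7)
The plan is to exploit the heavy-tailed, semi-exponential behavior of each $C_j$ (from Lemma~\ref{tail-asymptotics-for-semicycle}) against the light-tailed concentration of $N_g(p_{t,\epsilon/2})$ at the value $1$. Intuitively, as $t\to\infty$ the process $X_{t,\epsilon/2}$ is near-deterministic and its drift pulls it to the origin, so climbing back from $\epsilon/2$ to $\epsilon$ before hitting $0$ is very costly; hence $p_{t,\epsilon/2}\to 1$ at a super-exponential speed, and a single big $C_j$ drives the whole sum. The lower bound is immediate: since $N_g\geq 1$ almost surely and is independent of the $C_j$'s,
\[
\P\!\left(\sum_{j=1}^{N_g(p_{t,\epsilon/2})}C_j\geq b\right)\geq \P(N_g(p_{t,\epsilon/2})=1)\,\P(C_1\geq b)=p_{t,\epsilon/2}\,\P(C_1\geq b),
\]
and Lemma~\ref{tail-asymptotics-for-semicycle} combined with $p_{t,\epsilon/2}\to 1$ yields
$\liminf_{t\to\infty}t^{-\gamma}\log \P(\sum_{j=1}^{N_g}C_j\geq b)\geq -b^{\gamma}\,\mathrm{V}^*(\epsilon,\infty,u_{\epsilon/2},1)$,
where $\gamma\triangleq (\kappa+1)/(p+1-\kappa)$.

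For the upper bound I would fix $K\in\mathbb{N}$ and split
\[
\P\!\left(\sum_{j=1}^{N_g}C_j\geq b\right)\leq \P\!\left(\sum_{j=1}^{K}C_j\geq b\right)+\P(N_g>K).
\]
I would control $\P(N_g>K)=(1-p_{t,\epsilon/2})^K$ by applying Result~\ref{FW-SPLDP} to $X_{t,\epsilon/2}$ started at $\epsilon/2$: since $u_{\epsilon/2}$ is strictly negative on $[\epsilon/2,\epsilon]$, the minimal-action cost $c(\epsilon)>0$ of a path climbing from $\epsilon/2$ to $\epsilon$ without touching $0$ is strictly positive, and an argument analogous to Lemma~\ref{high-occupation-interval-asymptotics} yields $1-p_{t,\epsilon/2}\leq e^{-c(\epsilon)t^{\gamma}/2}$ for all large $t$. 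The first term is handled by Result~\ref{LDP-semiexponential-rvs}: since each $C_j$ obeys the semi-exponential LDP from Lemma~\ref{tail-asymptotics-for-semicycle}, the single-big-jump principle for sums of semi-exponential variables gives $\limsup_{t\to\infty}t^{-\gamma}\log \P(\sum_{j=1}^{K}C_j\geq b)\leq -b^{\gamma}\mathrm{V}^*(\epsilon,\infty,u_{\epsilon/2},1)$ for each fixed $K$. Choosing $K$ large enough so that $Kc(\epsilon)/2 > b^{\gamma}\mathrm{V}^*(\epsilon,\infty,u_{\epsilon/2},1)$ and invoking the principle of the maximum term matches the lower bound at the value $-b^{\gamma}\mathrm{V}^*(\epsilon,\infty,u_{\epsilon/2},1)$.

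The outer $\epsilon\downarrow 0$ limit is then dispatched by combining parts $i$)--$iv$) of Lemma~\ref{continuity-properties-of-V}, which give $\lim_{\epsilon\downarrow 0}\mathrm{V}^*(\epsilon,\infty,u_{\epsilon/2},1)=\mathrm{V}^*_+(0,\infty,\mathbf{D},1)=\mathrm{V}^*(0,\infty,\mathbf{D},1)$, completing the identification. I expect the main obstacle to be the upper bound on $\P(\sum_{j=1}^{K}C_j\geq b)$: the crude max-type estimate $\P(\sum_{j=1}^{K}C_j\geq b)\leq K\P(C\geq b/K)$ only yields $-\mathrm{V}^{*}(b/K)^{\gamma}$, which is strictly weaker than $-\mathrm{V}^{*}b^{\gamma}$ for $K\geq 2$, and the natural two-scale truncation $\P(\sum \geq b)\leq K\P(C\geq b-(K-1)\eta)+\binom{K}{2}\P(C\geq \eta)^2$ cannot be optimized to give the tight rate $-\mathrm{V}^{*}b^{\gamma}$ either, so one genuinely needs the sharp finite-$K$ single-big-jump LDP of Result~\ref{LDP-semiexponential-rvs}. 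A secondary subtlety is that $c(\epsilon)\downarrow 0$ as $\epsilon\downarrow 0$, so the choice of $K$ must be made \emph{after} fixing $\epsilon$ and before sending $t\to\infty$; the ordering of limits ($t\to\infty$ first, then $\epsilon\downarrow 0$) is crucial and mirrors the iterated-limit structure of Lemma~\ref{tail-asymptotics-for-semicycle}.
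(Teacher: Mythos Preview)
Your overall architecture matches the paper's: truncate $N_g$ at a finite level, control the geometric tail via an exit-probability estimate, and establish a single-big-jump bound for the finite sum. The lower bound is fine (in fact you can drop the factor $p_{t,\epsilon/2}$ entirely, since $N_g\geq 1$ and $C_j\geq 0$ give $\sum_{j=1}^{N_g}C_j\geq C_1$ almost surely). The exit-probability step is also fine in spirit; the paper computes $1-p_{t,\epsilon/2}$ explicitly via the scale function (Theorem~6.17 in Klebaner) and obtains $\limsup_t t^{-\gamma}\log(1-p_{t,\epsilon/2})^M\leq -\tfrac{3M}{2^\kappa}\epsilon^{1+\kappa}$, then takes $M=\lceil \epsilon^{-(\kappa+2)}\rceil$ so that $M\epsilon^{1+\kappa}\to\infty$ as $\epsilon\downarrow 0$.

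The genuine gap is your treatment of $\P\bigl(\sum_{j=1}^{K}C_j\geq b\bigr)$. Result~\ref{LDP-semiexponential-rvs} does \emph{not} apply here: that result is an $n\to\infty$ LDP for sums of i.i.d.\ variables whose law is fixed (independent of the asymptotic parameter), whereas in your setting the number of summands $K$ is fixed and the law of each $C_j$ depends on $t$ (and $\epsilon$). The paper flags this explicitly in the remark preceding the proof (``this lemma does not follow from standard large deviations results for sums of semi-exponential random variables: the $C_j$'s depend on $\epsilon$ and on $t$''). What is actually needed is a direct argument: the paper splits once more according to whether $\max_j C_j$ exceeds $b(1-\epsilon_1)$, handles the max-term by a union bound, and for the complementary term discretizes each $C_j$ on a mesh of size $\epsilon_0$, inserts the uniform upper tail bound $\P(C_j\geq y)\leq \exp\bigl[-t^\gamma y^\gamma(\mathrm{V}^*-\epsilon_2)\bigr]$ from Lemma~\ref{tail-asymptotics-for-semicycle}, and reduces to the finite-dimensional optimization $\min\{\sum_{i=1}^M u_i^\gamma:\sum u_i\geq b,\ u_i\leq b(1-\epsilon_1)\}$, whose value is $(b(1-\epsilon_1))^\gamma+(b\epsilon_1)^\gamma$ by sub-additivity of $x\mapsto x^\gamma$. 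Sending $\epsilon_0,\epsilon_2\downarrow 0$ and then $\epsilon_1\downarrow 0$ recovers $-b^\gamma\mathrm{V}^*$. You could alternatively argue via an independence LDP plus contraction (the rate of the sum is $\inf\{\sum I(u_j):\sum u_j=b\}$ with $I(u)=u^\gamma\mathrm{V}^*$), but you would still need to justify a full product LDP from the one-sided tail estimate of Lemma~\ref{tail-asymptotics-for-semicycle}; invoking Result~\ref{LDP-semiexponential-rvs} as a black box does not do this.
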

Note that this lemma does not follow from standard large deviations results for sums of semi-exponential random variables: the $C_{j,t}$'s depend on $\epsilon$ through the modified diffusion process $X_{t,\epsilon}$.

\begin{proof}
%
%{\color{red}
%\begin{enumerate}
%\item By Lemma~\ref{tail-asymptotics-for-semicycle}, we have the semiexponential property of each of the $C_i$'s;
%\item Truncate $N_g(p_{t,\epsilon})$ by $t^{\gamma}$, $\gamma$ can be chosen in $(\gamma,1)$;
%\item for large $t$'s upper bound the sum by $t\delta$, $\delta$ is arbitrary;
%\item Use result for random walks with semiexponential increments, and infer the large deviation upper bound;
%\item send $\delta$ to zero;
%\item for the lower bound, pick one cycle $(C_i)$, and generate an appropriate  event.
%\end{enumerate}
%}
In Lemma~\ref{tail-asymptotics-for-semicycle}, we have verified the semi-exponential property for each of the $C_{j,t}$'s.
For the proof of our statement, we combine this lemma with a truncation argument for $N_g(p_{t,\epsilon/2})$.
Let $\epsilon > 0$ be fixed and let $M= \left\lceil\frac{1}{\epsilon^{k+2}}\right\rceil$.
To obtain an upper bound, fix $b \geq 0$.
Then,
\begin{align*}
&\P\left(\sum_{j=1}^{N_{g}(p_{t,\epsilon/2})} C_{j,t} \geq b\right) \leq
\P\left(\sum_{j=1}^{N_{g}(p_{t,\epsilon/2})} C_{j,t} \geq b, \ N_{g}(p_{t,\epsilon/2}) <  M \right) + \P(N_{g}(p_{t,\epsilon/2}) \geq  M )
\\
&
\leq
\P\left(\sum_{j=1}^{M} C_{j,t} \geq b \right) + \P(N_{g}(p_{t,\epsilon/2}) \geq  M)
\leq
\P\left(\sum_{j=1}^{M} C_{j,t} \geq b \right) + \sum_{k =  M }^{\infty}(1-p_{t,\epsilon/2})^{k}p_{t,\epsilon/2}
\\
%&
%\leq
%\P\left(\sum_{j=1}^{M} C_j \geq b \right) + (1-p_{t,\epsilon/2})^{M}
%\\
&
\leq
\underbrace{\P\left(\sum_{j=1}^{M} C_{j,t} \geq b, \ \max_{1 \leq j \leq M}C_{j,t} \leq b(1-\epsilon_1)  \right) }_{\triangleq (I)}
+
\underbrace{\P\left(\max_{1 \leq j \leq M} C_{j,t} \geq b(1-\epsilon_1)\right)}_{ \triangleq (II)}
%\\
%&
%\qquad\qquad\qquad\qquad\qquad
+
%\underbrace{
(1-p_{t,\epsilon/2})^{M}
%}_{= \P_{\epsilon/2}\left(\tau_{t,\epsilon/2}(\epsilon) < \tau_{t,\epsilon/2}(0)\right)^M}.
\end{align*}

We start with the log-asymptotics of $(1-p_{t,\epsilon/2})^M$. Recall that $X_{t,\epsilon/2}$ is the solution of the following stochastic differential equation:
$
\mathrm{d}X_{t,\epsilon/2}(u)
=
u_{\epsilon/2}\left( X_{t,\epsilon/2}(u)\right)\mathrm{d}u
	+
\frac{2^{-1/2}\sigma}{t^{\gamma/2}} \mathrm{d} B(u), \ u \geq 0.
$
Hence, in view of Theorem~6.17 in \cite{klebaner2012introduction}, we can write for some normalizing constant $c$,
\begin{align*}
 1-p_{t,\epsilon/2} = \P_{\frac{\epsilon}{2}}
(\tau_{t,\epsilon/2}(\epsilon) < \tau_{t,\epsilon/2}(0))
=
\frac{\int_{0}^{\frac{\epsilon}{2}}\exp\left[{2 t^{\gamma}\int_{c}^{u}\frac{y}{(\epsilon/2)^{1-\kappa}} \mathrm{d}y}\right]\mathrm{d}u}
{\int_{0}^{\epsilon/2}\exp\left[{2 t^{\gamma}\int_{c}^{u}\frac{y}{(\epsilon/2)^{1-\kappa}} \mathrm{d}y}\right]\mathrm{d}u
+
\int_{\epsilon/2}^{\epsilon}\exp\left[{2 t^{\gamma}\int_{c}^{u}y^\kappa \mathrm{d}y}\right]\mathrm{d}u
}. \nonumber
\end{align*}
We can bound the second integral in the denominator using $y^\kappa\geq y/(\epsilon/2)^{1-\kappa}$ for $y<\epsilon$. Simplifying the resulting formula
we obtain after a straightforward computation,

\begin{align}
&\frac{\int_{0}^{\frac{\epsilon}{2}}
\exp\left[{2 t^{\gamma}
\big(\frac{u^2}{(\frac{\epsilon}{2})^{1-\kappa}}-(\frac{\epsilon}{2})^{2}\frac{1}{(\frac{\epsilon}{2})^{1-\kappa}} \big)}\right]\mathrm{d}u}
{\int_{0}^{\epsilon/2}\exp\left[{2 t^{\gamma}
\Bigg(
\frac{u^2}{(\frac{\epsilon}{2})^{1-\kappa}}}
-
(\frac{\epsilon}{2})^{2}\frac{1}{(\frac{\epsilon}{2})^{1-\kappa}}\Bigg)\right]\mathrm{d}u+\int_{\epsilon/2}^{\epsilon}\exp\left[{2 t^{\gamma}\Bigg(\frac{u^2}{(\frac{\epsilon}{2})^{1-\kappa}} }
-
(\frac{\epsilon}{2})^{2}\frac{1}{(\frac{\epsilon}{2})^{1-\kappa}}\Bigg)\right]\mathrm{d}u}
\nonumber
\\
%&
%\leq
%\frac{\epsilon/2}
%{\int_{0}^{\epsilon/2}\exp\left[{2 t^{\gamma}\Bigg(
%\frac{u^2}{(\frac{\epsilon}{2})^{1-\kappa}}}
%-
%(\frac{\epsilon}{2})^{2}\frac{1}{(\frac{\epsilon}{2})^{1-\kappa}}\Bigg) \right]\mathrm{d}u
%+
%\int_{\epsilon/2}^{\epsilon}\exp\left[{2 t^{\gamma}\Bigg(\frac{u^2}{(\frac{\epsilon}{2})^{1-\kappa}} }
%-
%(\frac{\epsilon}{2})^{2}\frac{1}{(\frac{\epsilon}{2})^{1-\kappa}}\Bigg)\right]\mathrm{d}u} \nonumber
%\\
%
&
\leq
\frac{\epsilon/2}
{
\int_{\epsilon/2}^{\epsilon}u\exp\left[{2 t^{\gamma}\Bigg(\frac{u^2}{(\frac{\epsilon}{2})^{1-\kappa}} }
-
(\frac{\epsilon}{2})^{2}\frac{1}{(\frac{\epsilon}{2})^{1-\kappa}}\Bigg)\right]\mathrm{d}u} \label{computation-exit-probability}
%\\
%&
%=
%\frac{\epsilon/2}{
%\frac{(\epsilon/2)^{1-\kappa}}{4t^{\gamma}}\exp\left[\frac{2t^{\gamma}}{(\epsilon/2)^{1-\kappa}} \frac{3}{4}\epsilon^2\right]
%\cdot
%\left(
%1-e^{-2t^\gamma \frac{3\epsilon^2}{4(\epsilon/2)^{1-\kappa}}}
%\right)
%} %\nonumber
% \\
% &
% \leq
% \frac{\epsilon}{2}
% \left({\frac{\epsilon^{1-\kappa}}{2t^\gamma} \exp\left[
% {\frac{3 t^\gamma \epsilon^2}{2\epsilon^{1-\kappa}}}\right]-\frac{\epsilon^{1-\kappa}}{2t^\gamma} \exp\left[
% {\frac{ 2t^\gamma \epsilon^2}{4\epsilon^{1-\kappa}}}\right]+
% \frac{\epsilon}{2} \exp\left[
% {\frac{-t^\gamma \epsilon^{1+\kappa}}{2^{\kappa}}}\right]}\right)^{-1} \nonumber
%\\
%&
=
4{t^\gamma}(\epsilon/2)^{k} \cdot \exp\left[-\frac{3 t^\gamma \epsilon^{1+\kappa}}{2^\kappa}\right] \cdot \frac{1}{1- \exp\left[-{\frac{2 t^\gamma 3\epsilon^2}{4(\epsilon/2)^{1-\kappa}}}\right]  },
\end{align}
where in the inequality, we have bounded the integrand in the numerator by $1$, removed the left integral in the denominator, and lower-bounded the second integral by adding a factor $u<1$ to the integrand.

Due to \eqref{computation-exit-probability},
\begin{equation}\label{log-asy-exit-probability}
\limsup_{ t \to \infty}\frac{1}{t^\gamma} \log \P_{\epsilon/2}\left(\tau_{t,\epsilon/2}(\epsilon) < \tau_{t,\epsilon/2}(0)\right)^M \leq - \frac{3M}{2^\kappa}\epsilon^{1+\kappa}.
\end{equation}

We continue with term $(I)$.
To this end, for each $j=1,\ldots,M$, let $i_j$ have values in $\{0,\ldots,N(\epsilon_0)\}$ and consider the following partition:
\begin{align*}
&
D_N^{(j)}(\epsilon_0)
=
\left\{
0=y_{0}
\leq
\ldots
\leq
y_{N(\epsilon_0)}
=
b\cdot(1-\epsilon_1): y_{i_j+1}-y_{i_j} \leq \epsilon_0, i_j=0,\ldots,N(\epsilon_0)-1 \right\}.
\end{align*}
Based on the above partition, we derive the following upper bound:
\begin{align}\label{one-crude-inequality}
&
\P\left( \sum_{j=1}^{M} C_{j,t} \geq b, \ \max_{1 \leq j \leq M} C_{j,t} \leq b (1-\epsilon_1)\right)
\nonumber
\\
&
=
\int_{u_{M} \in [0,b(1-\epsilon_1)]}\ldots \int_{u_1 \in [0,b(1-\epsilon_1)]}\mathbf{I}\left\{ u_1+\ldots+u_{M} \geq b\right\} \prod_{j=1}^{M} \P\left(C_{j,t} \in \mathrm{d} u_j \right)
\nonumber
\\
&
\leq
\sum_{i_1=0}^{N(\epsilon_0)-1}
\ldots
\sum_{i_M=0}^{N(\epsilon_0)-1}
\mathbf{I}\left\{ y_{i_1+1}+\ldots+y_{i_M+1} \geq b
\right\}
\prod_{j=1}^{M}\P\left( y_{i_j} \leq C_{j,t}  < y_{i_j+1} \right)
\nonumber
\\
&
\leq
\sum_{i_1=0}^{N(\epsilon_0)-1}
\ldots
\sum_{i_M=0}^{N(\epsilon_0)-1}
\mathbf{I}\left\{ y_{{i}_1+1}+\ldots+y_{{i}_M+1} \geq b
\right\}
\prod_{j=1}^{M}\Big[\P\left( C_{j,t} \geq y_{i_j}\right)-\P\left( C_{j,t} \geq y_{{i}_j+1} \right)\Big]
\nonumber
\\
&
\leq
\sum_{i_1=0}^{N(\epsilon_0)-1}
\ldots
\sum_{i_M=0}^{N(\epsilon_0)-1}
\mathbf{I}\left\{ y_{{i}_1+1}+\ldots+y_{{i}_M+1} \geq b
\right\}
\prod_{j=1}^{M} \P\left( C_{j,t} \geq y_{i_j}\right).
\end{align}
Thanks to Lemma~\ref{tail-asymptotics-for-semicycle}, for any $\epsilon_2>0$ there exists $t_{\epsilon_2}$ such that for every $t \geq t_{\epsilon_2}$,
\begin{equation}\label{upper-bound-from-lemma36}
\P\left(C_{j,t} \geq y_{i_j} \right) \leq e^{-\big({y_{i_j}}\big)^\gamma t^\gamma \left(\mathrm{V}(0,\infty,\mathbf{D}, 1)-\epsilon_2 \right)}, \ \text{for each} \ 0 \leq  i_j \leq N(\epsilon_0), \ \text{and} \ j=1,\ldots,M.
\end{equation}
In view of \eqref{upper-bound-from-lemma36}, we obtain the following upper bound for \eqref{one-crude-inequality}:
\begin{align*}
&
\sum_{i_1=0}^{N(\epsilon_0)-1}
\ldots
\sum_{i_M=0}^{N(\epsilon_0)-1}
\mathbf{I}\left\{ y_{{i}_1+1}+\ldots+y_{{i}_M+1} \geq b
\right\}
\prod_{j=1}^{M}  e^{-\big({y_{i_j}}\big)^\gamma t^\gamma (\mathrm{V}(0,\infty,\mathbf{D}, 1)-\epsilon_2)} \nonumber
\\
&
\leq
\sum_{i_1=0}^{N(\epsilon_0)-1}
\ldots
\sum_{i_M=0}^{N(\epsilon_0)-1}
\mathbf{I}\left\{ y_{{i}_1+1}+\ldots+y_{{i}_M+1} \geq b
\right\}
\prod_{j=1}^{M}  e^{-\left(y_{{i}_j+1}-\epsilon_0\right)^\gamma t^\gamma (\mathrm{V}(0,\infty,\mathbf{D}, 1)-\epsilon_2)} \nonumber
\\
&
% \leq
% \sum_{i=0}^{N(\epsilon_0)}
% \ldots
% \sum_{i=0}^{N(\epsilon_0)}
% \mathbbm{1}\left\{ y_{i+1}^{(1)}+\ldots+y_{i+1}^{(M)}\geq b \right\}
% \prod_{j=1}^{M}  e^{\left(-\big({y_i^{(j)}}\big)^\gamma+(\epsilon_0)^\gamma \right) t^\gamma (\mathrm{V}(0,\infty,\mathbf{D}, 1)-\epsilon_2)}
% \\
% &
\leq
%\int_{\forall i u_i \in [0,b(1-\epsilon_1)]}
\int_{0}^{b(1-\epsilon_1)}
\ldots
\int_{0}^{b(1-\epsilon_1)}
\mathbf{I}_{\left\{ \sum_{i=1}^{M}u_i \geq b\right\}}
\prod_{i=1}^{M}  e^{\left(-(u_i)^\gamma+(\epsilon_0)^\gamma \right) t^\gamma (\mathrm{V}(0,\infty,\mathbf{D}, 1)-\epsilon_2)} \mathrm{d}u_1\ldots \mathrm{d}u_M.
\nonumber
\end{align*}
Next, observe that the last inequality above is bounded by
\begin{align}\label{crude-inequality-with-ld-estimates}
&
b^M\cdot
\max_{ \forall i, u_i \in \R_+
}
\left\{
\mathbf{I}_{\left\{ \sum_{i=1}^{M}u_i \geq b\right\}}
\cdot
\prod_{i=1}^{M}
\mathbf{I}_
{\left\{
u_i \leq b(1-\epsilon_1)
\right\}}
\cdot
e^{\left(-(u_i)^\gamma+(\epsilon_0)^\gamma \right) t^\gamma (\mathrm{V}(0,\infty,\mathbf{D}, 1)-\epsilon_2)} \right\}.
\end{align}
Furthermore, observe that this optimization problem is equivalent to \begin{equation}\label{sub-additive-finite-d-opt}
\min_{u_i \in \R_+}\left\{\sum_{i=1}^{M}u_i^\gamma: \ \sum_{i=1}^{M}u_i \geq b, \ \& \ u_i \leq b(1-\epsilon_1) \ \text{for every} \ i=1,\ldots,M\right\}.
\end{equation}
Due to the sub-additivity of the function $x \mapsto x^{\gamma}$, the optimal solution of \eqref{sub-additive-finite-d-opt} is attained at $(b(1-\epsilon_1), b\epsilon_1)$, and the optimal value is equal to $(b(1-\epsilon_1))^\gamma+ (b\epsilon_1)^\gamma$ (see (4.5) of \cite{bazhba2017sample}).
Hence, \eqref{crude-inequality-with-ld-estimates} is bounded by \[
b^M \cdot
\exp\Big[-{\Big((b(1-\epsilon_1))^\gamma+ (b\epsilon_1)^\gamma -M (\epsilon_0)^\gamma\Big)}t^\gamma (\mathrm{V}(0,\infty,\mathbf{D},1)-\epsilon_2)\Big].
\]
Therefore,
$
\limsup_{ t \to \infty}t^{-\gamma} \log(I) \leq
-\left[(b(1-\epsilon_1))^\gamma+ (b\epsilon_1)^\gamma-M (\epsilon_0)^\gamma \right] (\mathrm{V}(0,\infty,\mathbf{D},1)-\epsilon_2).
$
Since $\epsilon_2$ and $\epsilon_0$ are arbitrary, we can let $\epsilon_2$ and $\epsilon_0$ tend to $0$. That is,
\begin{equation}\label{log-Asy-for-term-II-1par}
\limsup_{ t \to \infty}t^{-\gamma} \log(I) \leq
-\left[(b(1-\epsilon_1))^\gamma+ (b\epsilon_1)^\gamma\right] \cdot \mathrm{V}(0,\infty,\mathbf{D},1).
\end{equation}

For term $(II)$, the union bound and the upper tail bound for $C_{1,t}$ imply that
\begin{align}\label{log-asympotics-term-2-tail-probability}
\limsup_{ t \to \infty}t^{-\gamma} \log(II)
&
\leq
\limsup_{ t \to \infty}t^{-\gamma} \log\left[M\P\left(C_{1,t} \geq b(1-\epsilon_1)\right)\right] %\nonumber
%\\
%&
\leq
-(b(1-\epsilon_1))^\gamma \cdot \mathrm{V}(0,\infty,\mathbf{D},1).
\end{align}
Recall that $M= \left\lceil\frac{1}{\epsilon^{k+2}}\right\rceil$.  Invoking \eqref{log-asy-exit-probability}, \eqref{log-Asy-for-term-II-1par}, \eqref{log-asympotics-term-2-tail-probability} and the principle of the largest term, we obtain
\begin{align*}
&
\limsup_{ t \to \infty}t^{-\gamma} \log\P\left(\sum_{j=1}^{N_{g}(p_{t,\epsilon/2})} C_{j,t} \geq b\right)
\nonumber
\\
&
\leq
\lim_{\epsilon_1 \downarrow 0}\lim_{\epsilon \downarrow 0}\max\left\{-\left[(b(1-\epsilon_1))^\gamma+ (b\epsilon_1)^\gamma\right] \cdot \mathrm{V}(0,\infty,\mathbf{D},1), -(b(1-\epsilon_1))^\gamma \cdot \mathrm{V}(0,\infty,\mathbf{D},1) \right\}
\nonumber
\\
&
\qquad\qquad\qquad\qquad \vee
\left(
-\left\lceil\frac{1}{\epsilon^{k+2}}\right\rceil\frac{3}{2^\kappa}\epsilon^{1+\kappa} \nonumber
\right)
\\
&
=-b^\gamma \cdot \mathrm{V}(0,\infty,\mathbf{D},1).
\end{align*}

For the lower bound, due to Lemma~\ref{tail-asymptotics-for-semicycle},
\begin{align*}
\lim_{\epsilon \downarrow 0}\lim_{t \to \infty}\frac{1}{t^{\gamma}}\log\P\left(\sum_{j=1}^{N_{g}(p_{t,\epsilon/2})} C_{j,t} \geq b\right)
&
\geq
\lim_{\epsilon \downarrow 0}\lim_{t \to \infty}\frac{1}{t^{\gamma}}\log\P\left( C_{1,t} \geq b\right)
%\\
%&
\geq
-b^{\gamma} \cdot \mathrm{V}(0,\infty,\mathbf{D}, 1).
\end{align*}
\end{proof}

\subsection{Lower Bound for the Full Cycle $C_1^\delta$}\label{lower-bound-lemma2.1}

In this subsection, we derive the large deviations lower bound for the full cycle $C_1^{\delta}$.
To this end, we follow a similar strategy as for the large deviations upper bound.
First, we construct a modified diffusion process $\tilde{X}_{t,\epsilon}$ with a
Lipschitz drift and of which the area during its regeneration cycle now lower-bounds $C_1^{\delta}$.
We impose extra conditions, and we derive a lower bound for the area of  $\tilde{X}_{t,\epsilon}$ (Lemma~\ref{lower-bound-with-modified-process}).
Next, we prove that the variational problem associated with the lower bound has the same optimal value as the variational problem associated with the upper bound (Lemma~\ref{logarithim-lower-bound-for-full-cycle}).

Recall that the scaled diffusion process $X_t$ does not satisfy the framework of Result~\ref{FW-SPLDP}.
Therefore, we introduce the modified diffusion process $\tilde{X}_{t,\epsilon}$, as follows.
Let $\tilde{X}_{t,\epsilon}$ be the solution of the following SDE:
$
\mathrm{d}\tilde{X}_{t,\epsilon}(s)=\tilde{u}_{\epsilon}\big(\tilde{X}_{t,\epsilon}(s)\big)\mathrm{d}s+\sigma t^{-\gamma/2} \mathrm{d}B(s)
$, $s\geq 0$.
Since the drift term $\tilde{u}_{\epsilon}$ (defined in \eqref{eq-modifieddrift})  is a Lipschitz continuous function, the diffusion process $\tilde{X}_{t,\epsilon}$ satisfies the assumptions of Result~\ref{FW-SPLDP}, and hence, it satisfies the sample-path LDP in the space of continuous functions---over the time domain $[0,T]$---with rate function
$I_{x_0,\tilde{u}_{\epsilon}}^{T}(\xi)$.
Furthermore, let $\tilde{\tau}_{t,\epsilon}(0)=\inf\{s \geq 0: \tilde{X}_{t,\epsilon}(s)=0\}$ be the first hitting time to $0$ of the modified diffusion process $\tilde{X}_{t,\epsilon}$.
%\newpage
\begin{lemma}\label{lower-bound-with-modified-process}
Let $\delta > 0$, $\epsilon >0$, $z>0$, $H>0$ and $M>0$ be given.
Then, for every fixed $p>2\kappa$ and $t>0$,
	\begin{align*}
&
\P_{0}\left(
\int_{0}^{B_1^{\delta}}|X(s)|^p\mathrm{d}s > t
\right)
\\
&
\geq
\P_{\frac{\delta}{t^{\alpha/p}}}\left(
\tilde{X}_{t,\epsilon}(s) > s \ \mathrm{over} \ (0,z)
\right)
%\\
%&
%\quad
%\cdot
\P_{z}
\left(
\int_{0}^{H}\tilde{X}_{t,\epsilon}(s)\mathrm{d}s > 1, \
\inf_{s \in [0,H]}\{\tilde{X}_{t,\epsilon}(s)\} > z/2,  \ \sup_{s \in [0,H]}\{\tilde{X}_{t,\epsilon}(s)\} < M+z
\right).
\end{align*}
\end{lemma}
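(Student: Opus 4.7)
The plan is to combine the scaling identity from Lemma~\ref{distributional-equalities-via-scaled-diffusion-process} with a pathwise Yamada comparison between $X_t$ and the Lipschitz-drift process $\tilde{X}_{t,\epsilon}$, and then to factor the resulting probability via the strong Markov property at a carefully chosen stopping time. First, by part $ii$) of Lemma~\ref{distributional-equalities-via-scaled-diffusion-process}, the LHS equals $\P_0\bigl(\int_0^{\tau_t(0)}|X_t(s)|^p\mathrm{d}s>1\bigr)$. Dropping the nonnegative contribution on $[0,T_0(t,\delta/t^{\alpha/p})]$ and applying the strong Markov property of $X_t$ at $T_0$ gives a lower bound by $\E_0\bigl[\P_{X_t(T_0)}\bigl(\int_0^{\tau}|X_t|^p\mathrm{d}s>1\bigr)\bigr]$, where $\tau$ denotes the first hitting time of $0$ of the restarted process $X_t$ and $X_t(T_0)=\pm\delta/t^{\alpha/p}$. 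The symmetry of the drift $x\mapsto-\mathrm{sgn}(x)|x|^\kappa$ and of $|\cdot|^p$ makes both signs equivalent, reducing the expression to $\P_{\delta/t^{\alpha/p}}\bigl(\int_0^{\tau}|X_t|^p\mathrm{d}s>1\bigr)$.

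Next, I would verify the pointwise bound $\tilde{u}_{\epsilon}(x)\le -\mathrm{sgn}(x)|x|^\kappa$ for every $x\in\R$: equality holds on $x\ge\epsilon$, while $\tilde{u}_{\epsilon}(x)=-\epsilon^\kappa$ lies below $-x^\kappa$ on $[0,\epsilon)$ and below $|x|^\kappa$ on $(-\infty,0)$. Since $\tilde{u}_{\epsilon}$ is Lipschitz, Theorem~1.1 of~\cite{yamada1973comparison} permits coupling $\tilde{X}_{t,\epsilon}$ and $X_t$ on a common Brownian motion, both started from $\delta/t^{\alpha/p}$, so that $\tilde{X}_{t,\epsilon}(s)\le X_t(s)$ almost surely for all $s\ge 0$. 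In particular, on any interval where $\tilde{X}_{t,\epsilon}>0$, also $X_t>0$ and $\tilde{X}_{t,\epsilon}^p\le |X_t|^p$.

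The final step is the strong Markov decomposition for $\tilde{X}_{t,\epsilon}$. Set $T_z=\inf\{s\ge 0:\tilde{X}_{t,\epsilon}(s)=z\}$; write $A$ for the event in the first factor of the stated RHS, and $B$ for the event in the second factor (reading its integrand as $\tilde{X}_{t,\epsilon}^p$, consistent with the rest of Section~\ref{proof-lemma-21}). On $A$, continuity forces $T_z\le z$ and $\tilde{X}_{t,\epsilon}(T_z)=z$. On the joint event $A\cap\theta_{T_z}^{-1}B$, with $\theta_{T_z}$ the shift by $T_z$, $\tilde{X}_{t,\epsilon}$ stays strictly positive on $[0,T_z+H]$---above the diagonal on $(0,z)$ and above $z/2$ on $[T_z,T_z+H]$---so by the coupling $X_t>0$ on $[0,T_z+H]$ as well, $\tau\ge T_z+H$, and $\int_0^{\tau}|X_t|^p\ge\int_{T_z}^{T_z+H}\tilde{X}_{t,\epsilon}^p>1$. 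Applying the strong Markov property of $\tilde{X}_{t,\epsilon}$ at $T_z$ and using $\tilde{X}_{t,\epsilon}(T_z)=z$ on $A$ then yields $\P_{\delta/t^{\alpha/p}}(A\cap\theta_{T_z}^{-1}B)=\P_{\delta/t^{\alpha/p}}(A)\cdot\P_z(B)$, which is exactly the claimed lower bound.

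The main obstacle to anticipate is precisely the choice of stopping time in this last step. Applying the strong Markov at the deterministic time $z$ would force me to treat $\tilde{X}_{t,\epsilon}(z)\ge z$ as a random initial value in the second factor and to invoke a monotonicity of $\P_\cdot(B)$ in that starting point. Such monotonicity fails, because the upper barrier $\sup_{[0,H]}\tilde{X}_{t,\epsilon}<M+z$ becomes harder to satisfy as the initial value grows, while the area and infimum conditions become easier. Replacing $z$ by the hitting time $T_z$ sidesteps the non-monotonicity by pinning the post-shift starting value to exactly $z$, at the mild cost of verifying that $A$ forces $T_z\le z$ and that positivity of $\tilde{X}_{t,\epsilon}$ on $[0,T_z+H]$ is enough to transfer the area bound back to $X_t$ through the coupling.
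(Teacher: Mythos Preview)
Your overall route—rescaling via Lemma~\ref{distributional-equalities-via-scaled-diffusion-process}, the Yamada pathwise comparison $\tilde X_{t,\epsilon}\le X_t$, and a Markov factorization—coincides with the paper's. The gap is in your final factorization. The equality $\P_{\delta/t^{\alpha/p}}\bigl(A\cap\theta_{T_z}^{-1}B\bigr)=\P_{\delta/t^{\alpha/p}}(A)\,\P_z(B)$ does \emph{not} follow from the strong Markov property, because $A=\{\tilde X_{t,\epsilon}(s)>s\text{ on }(0,z)\}$ is not $\mathcal F_{T_z}$-measurable: on $A$ you have $T_z\le z$, so $A$ still constrains the post-$T_z$ segment $(T_z,z)$, which overlaps with the window on which $\theta_{T_z}^{-1}B$ acts. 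An easy repair is to replace $A$ in the intermediate step by the $\mathcal F_{T_z}$-measurable superset $A_0=\{T_z\le z,\ \tilde X_{t,\epsilon}(s)>s\text{ on }(0,T_z)\}$; on $A_0\cap\theta_{T_z}^{-1}B$ the same positivity and area bounds transfer to $X_t$ through the coupling, and now strong Markov legitimately gives $\P(A_0\cap\theta_{T_z}^{-1}B)=\P(A_0)\,\P_z(B)\ge\P(A)\,\P_z(B)$. (The inclusion $A\subset A_0$ uses $\delta/t^{\alpha/p}<z$, so this repair covers all large $t$ rather than literally every $t>0$.)

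For comparison, the paper takes the deterministic-time route you set aside. It applies the Markov property at the fixed time $z$ and then invokes a second Yamada comparison to show that $y\mapsto\P_y\bigl(\int_0^H\tilde X_{t,\epsilon}^p>1,\ \inf_{[0,H]}\tilde X_{t,\epsilon}>0\bigr)$ is nondecreasing in $y$; the non-monotone constraint $\sup_{[0,H]}\tilde X_{t,\epsilon}<M+z$ is only imposed \emph{after} the starting point has already been reduced to $z$. So your objection to monotonicity is correct for the full event $B$, but the paper sidesteps it by postponing the upper barrier to the last line.
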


\begin{proof}
%Recall that $X_t(\cdot)=\frac{X(\cdot\  t^{\beta})}{t^{\alpha/p}}$, $\alpha=\frac{p}{p+1-\kappa}$, $\beta=\frac{1-\kappa}{p+1-\kappa}$, and recall the following hitting times: $T_0\left(t,\frac{\delta}{t^{\alpha/p}}\right)= \inf\left\{s \geq 0: |X_t(s)|=\frac{\delta}{t^{\alpha/p}}\right\}$,  and
%	 $\tau_t(0) = \inf\left\{s \geq T_0\left(t,\frac{\delta}{t^{\alpha/p}}\right): |X_t(s)|=0\right\}.$
First, note that
	\begin{align*}
	\P_0\left(
	\int_{0}^{B_1^{\delta}}|X(s)|^p\mathrm{d}s > t
	\right) 	%&
 &\geq \P_0\left(
	\int_{A_1^{\delta}}^{B_1^{\delta}}|X(s)|^p\mathrm{d}s > t
	\right)
		=
	\P_{{\frac{\delta}{t^{\alpha/p}}}}\left(
	\int_{0}^{\tau_t(0) }|X_t(s)|^p\mathrm{d}s > 1
	\right),
\end{align*}
%\textcolor{red}{referee 1, comment 17, finds above repetition  of lemma 4.1(ii) superfluous but i actually do not see how proof of Lemma 4.1(ii) is applied here, as it uses 0 initial condition. }
which in view of stochastic dominance is bigger than
\begin{align*}
	&
	\P_{{\frac{\delta}{t^{\alpha/p}}}}\left(
	\int_{0}^{\tilde{\tau}_{t,\epsilon}(0)}|\tilde{X}_{t,\epsilon}(s)|^p\mathrm{d}s > 1
	\right)
	%\\
	%&
	\geq
	\P_{{\frac{\delta}{t^{\alpha/p}}}}\left(
	\int_{0}^{H+z}|\tilde{X}_{t,\epsilon}(s)|^p\mathrm{d}s > 1, \ \inf_{s \in [0,H+z]}\{\tilde{X}_{t,\epsilon}(s)\}>0
	\right)
	\\
	&
	\geq
	\P_{{\frac{\delta}{t^{\alpha/p}}}}\left(
	\int_{0}^{H+z}|\tilde{X}_{t,\epsilon}(s)|^p\mathrm{d}s > 1, \ \inf_{s \in [0,H+z]}\{\tilde{X}_{t,\epsilon}(s)\}>0, \ \tilde{X}_{t,\epsilon}(s) \geq s \ \text{over} \ [0,z]
	\right)
	\\
	&
	\geq
	\P_{\tilde{X}_{t,\epsilon}(z)}
	\left(
	\int_{z}^{H+z}|\tilde{X}_{t,\epsilon}(s)|^p\mathrm{d}s > 1, \
	\inf_{s \in [0,H+z]}\{\tilde{X}_{t,\epsilon}(s)\}>0 \Big|  \tilde{X}_{t,\epsilon}(s) \geq s \ \text{over} \ [0,z]\right)
	\\
	&
	\qquad\qquad \cdot
	\P_{{\frac{\delta}{t^{\alpha/p}}}}\left(\tilde{X}_{t,\epsilon}(s) \geq s \ \text{over} \ [0,z] \right)
	\\
	&
	\geq
	\P_{z}\left(
	\int_{0}^{H}|\tilde{X}_{t,\epsilon}(s)|^p\mathrm{d}s > 1, \ \inf_{s \in [0,H]}\{\tilde{X}_{t,\epsilon}(s)\}>0\right) \P_{\frac{\delta}{t^{\alpha/p}}}\left( \ \tilde{X}_{t,\epsilon}(s) \geq s \ \text{over} \ [0,z]
	\right)
	\\
	&
	\geq
	\P_{{\frac{\delta}{t^{\alpha/p}}}}\left(
\tilde{X}_{t,\epsilon}(s) > s \ \text{over} \ (0,z)
\right)
%\\
%&
%\quad
%\cdot
\P_{z}
\left(
\int_{0}^{H}\tilde{X}_{t,\epsilon}(s)\mathrm{d}s > 1, \
\inf_{s \in [0,H]}\{\tilde{X}_{t,\epsilon}(s)\} > z/2,  \ \sup_{s \in [0,H]}\{\tilde{X}_{t,\epsilon}(s)\} < M+z
\right),
	\end{align*}
	where in the first equality as well as the second last inequality we have used the strong Markov property.
\end{proof}
Define the sets $I_{H,\epsilon/2}=\{	\inf_{s \in [0,H]}\{\tilde{X}_{t,\epsilon}(s)\} > \epsilon/2\}$ and $S^M_{H,\epsilon}=\{\sup_{s \in [0,H]}\{\tilde{X}_{t,\epsilon}(s)\} < M+\epsilon\} $.

\begin{lemma}\label{logarithim-lower-bound-for-full-cycle}
For every fixed $p>2\kappa$ and $\delta>0$, it holds that	
	\begin{itemize}
		\item[$i$)]
		$
		\lim_{\epsilon \downarrow 0}\liminf_{t \to \infty}t^{-\gamma}\log\P_{\frac{\delta}{t^{\alpha/p}}}\left(
		\tilde{X}_{t,\epsilon}(s) > s \ \mathrm{for\ all}\ s \ \in \ (0,\epsilon)
		\right) = 0.
		$
		\item[$ii$)] In addition,
		\begin{align*}
&\lim_{H \to \infty}\lim_{M \to \infty} \lim_{\epsilon \downarrow 0}\liminf_{t \to \infty}t^{-\gamma}\log\P_{\epsilon}
		\left(
		\int_{0}^{H}(\tilde{X}_{t,\epsilon}(s))^p\mathrm{d}s > 1, \tilde{X}_{t,\epsilon}(s) \in [\epsilon/2, M+\epsilon], s\in [0,H]
		\right)
%\\
%&		
		\geq -\mathrm{V}(0,\infty,\mathbf{D},1).
		\end{align*}
	\end{itemize}
\end{lemma}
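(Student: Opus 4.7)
The plan is to prove both assertions by invoking the Freidlin--Wentzell sample-path LDP (Result~\ref{FW-SPLDP}) for $\tilde X_{t,\epsilon}$, which has speed $t^{\gamma}$ with $\gamma = (1+\kappa)/(p+1-\kappa)$, rate functional $I^{T}_{x_0,\tilde{u}_\epsilon}$, and Lipschitz drift $\tilde u_\epsilon$. The two parts differ chiefly in how smoothly the event sits inside the ambient space of continuous functions.

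For part \textit{i)}, set $x_0 = \delta/t^{\alpha/p}$ and take as candidate path $\xi(s)= x_0 + 2s$. Then $\xi(s)-s = x_0 + s > 0$ on $[0,\epsilon]$, and since $|\tilde u_\epsilon(\xi(s))|\leq (3\epsilon)^\kappa$ for $t$ large enough,
\[
I^{\epsilon}_{x_0,\tilde{u}_\epsilon}(\xi) \;\leq\; \frac{\epsilon\,(2+(3\epsilon)^\kappa)^2}{\sigma^2} \;\xrightarrow[\epsilon\downarrow 0]{}\; 0.
\]
The obstacle is that the event $\{\omega(s) > s,\ s\in(0,\epsilon)\}$ is not an open set that contains a fixed-radius uniform neighborhood of $\xi$: the margin $\xi(s)-s$ vanishes as $s\downarrow 0$. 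I would handle this by splitting at $\eta_t := x_0/4$. On $[\eta_t,\epsilon]$ the event $\{\omega(s)>s,\ s\in[\eta_t,\epsilon]\}$ is open in uniform topology and contains a ball of radius $x_0/8$ around $\xi$, to which a localized Freidlin--Wentzell lower bound applies---derived via Girsanov against the driving Brownian motion in $x_0+\sigma_t B$ with $\sigma_t = \sigma t^{-\gamma/2}$, using that $x_0$ is comparable to $\sigma_t$ to control the small-ball probability---yielding a contribution of $-C(\epsilon)\,t^\gamma$ with $C(\epsilon)\to 0$. On $(0,\eta_t]$ I would use short-time fluctuation estimates: since $\sigma_t\sqrt{\eta_t}/x_0\to 0$, the event $\{\sup_{s\in[0,\eta_t]} |\tilde X_{t,\epsilon}(s)-x_0| < x_0/2\}$ has probability converging to $1$, and on it $\tilde X_{t,\epsilon}(s) > x_0/2 > s$ automatically. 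Combining and sending $\epsilon\downarrow 0$ gives the limit $0$.

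For part \textit{ii)}, the event $\{\int_0^H |\tilde X_{t,\epsilon}(s)|^p\,ds > 1\}\cap I_{H,\epsilon/2}\cap S^M_{H,\epsilon}$ is open in $\mathbb{C}[0,H]$---the first condition being a strict inequality on the continuous functional $\omega\mapsto\int_0^H|\omega|^p\,ds$ and the other two being strict sup/inf bounds---so the FW LDP lower bound applies directly:
\[
\liminf_{t\to\infty}\tfrac{1}{t^\gamma}\log\P_{\epsilon}(\cdots) \;\geq\; -\inf_{\xi \in A_{>\epsilon/2,\,M+\epsilon}(\epsilon,H,1)} I^{H}_{\epsilon,\tilde u_\epsilon}(\xi).
\]
I would then invoke Lemma~\ref{continuity-properties-V-2}\textit{i)} with $x=\epsilon$, $y=\epsilon/2$, $m=1$, $T=H$ to dominate this infimum by $\inf_{\xi\in A_{+,M}(0,H,1)} I^{H}_{0,\mathbf{D}}(\xi) + \frac{2\epsilon^\kappa}{\sigma^2}\bigl(M+M^\kappa H + \epsilon^\kappa H\bigr)$; the additive error vanishes as $\epsilon\downarrow 0$. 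Lemma~\ref{continuity-properties-V-2}\textit{ii)} then identifies the iterated limit $\lim_{H\to\infty}\lim_{M\to\infty}\inf_{A_{+,M}(0,H,1)} I^{H}_{0,\mathbf{D}}$ with $\mathrm{V}^*(0,\infty,\mathbf{D},1)$, giving the target lower bound $-\mathrm{V}^*(0,\infty,\mathbf{D},1)$ after chaining the limits in the prescribed order.

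The main obstacle is part \textit{i)}: the LDP lower bound is not directly applicable because the event is only marginally satisfied near $s=0$ (the margin $x_0$ shrinks to zero), so one must either work with shrinking neighborhoods of the target path or, equivalently, patch together a short-time oscillation estimate with a Freidlin--Wentzell estimate on a bulk interval. Part \textit{ii)} is then essentially an algebraic consequence of the FW lower bound and the variational continuity established in Lemma~\ref{continuity-properties-V-2}.
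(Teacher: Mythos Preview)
Your treatment of part \textit{ii)} coincides with the paper's: both apply the Freidlin--Wentzell lower bound to the open set $A_{>\epsilon/2,M+\epsilon}(\epsilon,H,1)$, then invoke Lemma~\ref{continuity-properties-V-2}\,\textit{i)} (with $x=\epsilon$, $y=\epsilon/2$) to pass to $\inf_{A_{+,M}(0,H,1)} I^H_{0,\mathbf{D}}$ with an $O(\epsilon^\kappa)$ error, and finally use Lemma~\ref{continuity-properties-V-2}\,\textit{ii)} to identify the iterated limit with $\mathrm{V}^*(0,\infty,\mathbf{D},1)$.

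For part \textit{i)} you take a different route. The paper argues by exponential equivalence that one may replace the initial point $\delta/t^{\alpha/p}$ by $0$, and then applies the LDP lower bound directly to $B=\{\xi\in\mathbb{C}_0[0,\epsilon]:\xi(s)>s\text{ on }(0,\epsilon)\}$ with test path $\xi_0(s)=2s$, obtaining the bound $-I^{\epsilon}_{0,\tilde u_\epsilon}(\xi_0)\geq-\tfrac{\epsilon}{\sigma^2}(2+2\epsilon^\kappa)^2$. You rightly flag that this step is delicate: since the margin $\xi_0(s)-s=s$ vanishes at $s=0$, the path $\xi_0$ does not lie in the interior of $B$ in $\mathbb{C}_0[0,\epsilon]$, so the abstract LDP lower bound does not apply off the shelf. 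Your splitting at $\eta_t=x_0/4$, combining a short-time oscillation estimate with a Girsanov-based tube estimate on the bulk, is a legitimate way to close this gap. One small correction: the claim that ``$x_0$ is comparable to $\sigma_t$'' is inaccurate for $\kappa<1$ (in fact $x_0/\sigma_t\to 0$); what actually drives both the short-time bound and the small-ball probability after Girsanov is that $x_0/\sigma_t^{2}\asymp t^{\kappa/(p+1-\kappa)}\to\infty$, which does hold. In short, the paper's argument is terser but glosses over the boundary issue at $s=0$; your approach is more involved but addresses it honestly.
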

\begin{proof}
	We start with \textit{i}).
	Note that $\tilde{X}_{t,\epsilon}$ s.t. $\tilde{X}_{t,\epsilon}(0)=\frac{\delta}{t^{\alpha/p}}$ is exponentially equivalent to $\tilde{X}_{t,\epsilon}$ initialized at $0$.
	That is,
	$
\P_{\frac{\delta}{t^{\alpha/p}}}\left(
	\tilde{X}_{t,\epsilon}(s) > s \ \text{for all} \ s \in \ (0,\epsilon)
	\right)$
	satisfies the same log-asymptotics as $
\P_{0}\left(
\tilde{X}_{t,\epsilon}(s) > s \ \text{for all} \ s \in \ (0,\epsilon)
	\right)
	$ as $t$ tends to infinity.
	Now, let $B=\{\xi \in \mathbb{C}[0,z]: \xi(0)=0, \ \xi(s) > s \ \text{over} \ (0,\epsilon) \}$, and set $\xi_0=2s$.
In view of the sample-path LDP of $\tilde{X}_{t,\epsilon}$,
	$
	\liminf_{t \to \infty}t^{-\gamma}\log\P_{0}\left(
	\tilde{X}_{t,\epsilon}(s) > s \ \text{over} \ (0,\epsilon)
	\right)
	\geq
	-\inf_{\xi \in B}I_{0,\tilde{u}_{\epsilon}}^{\epsilon}(\xi)
	\geq
	-I_{0,\tilde{u}_{\epsilon}}^{\epsilon}(\xi_0)
	$.
	Since 	$
	I_{0,\tilde{u}_{\epsilon}}^{\epsilon}(\xi_0) $ is bounded by $ \frac{1}{\sigma^2}\int_{0}^{\epsilon}(2+2\epsilon^{\kappa})^2 \mathrm{d}s=\frac{\epsilon}{\sigma^2}(2+2\epsilon^{\kappa})^2
	$, the desired inequality is obtained by letting $\epsilon$ tend to $0$.

\textit{Proof of ii}). Recall that
 $\int_{0}^{H}\xi(s)^p\mathrm{d}s$ is a continuous functional w.r.t. the supremum norm topology, and
 the diffusion process $\tilde{X}_{t,\epsilon}$ satisfies the conditions of Result~\ref{FW-SPLDP}.
  Hence, over fixed boundaries, the area swept under the trajectory of $\tilde{X}_{t,\epsilon}$ satisfies the sample-path LDP in $\mathbb{C}[0,H]$.
Thanks to the contraction principle, we obtain
\begin{align*}
&
\liminf_{t \to \infty}t^{-\gamma}\log\P_{\epsilon}
		\left(
		\int_{0}^{H}|\tilde{X}_{t,\epsilon}(s)|^p\mathrm{d}s > 1, \tilde{X}_{t,\epsilon}(s) \in [\epsilon/2, M+\epsilon], s\in [0,H]
		\right)
	%	\\
%		&
		\geq
		-\inf_{\xi \in A_{[\epsilon/2,M+\epsilon]}(\epsilon,H,1)}I_{\epsilon,\tilde{u}_{\epsilon}}^{H}(\xi).		
\end{align*}

In view of Lemma~\ref{continuity-properties-V-2}---part \textit{i}), we set $y=\epsilon/2$, $x=\epsilon$, and we obtain
	\[
	\inf_{\xi \in A_{[\epsilon/2,M+\epsilon]}(\epsilon,H,1)}I_{\epsilon,\tilde{u}_{\epsilon}}^{H}(\xi)-\frac{2\epsilon^\kappa}{\sigma^2}\left(M+M^\kappa H+{\epsilon^{\kappa}} H \right)
		 < \inf_{\xi \in A_{[0,M]}(0,H,1)}I_{0,\mathbf{D}}^{H}(\xi).
		 \]
Hence,
$-\lim_{\epsilon \downarrow 0}\inf_{\xi \in A_{[\epsilon/2,M+\epsilon]}(\epsilon,H,1)}I_{\epsilon,\tilde{u}_{\epsilon}}^{H}(\xi) \geq -\inf_{\xi \in A_{[0,M]}(0,H,1)}I_{0,\mathbf{D}}^{H}(\xi).$
Finally, we let $M$ and $H$ tend to $\infty$, therefore, in view of \textit{ii}) of Lemma~\ref{continuity-properties-V-2},
$$\lim_{H \to \infty}\lim_{M \to \infty}\inf_{\xi \in A_{[0,M]}(0,H,1)}I_{0,\mathbf{D}}^{H}(\xi)=\mathrm{V}(0, \infty,\mathbf{D},1).$$

\end{proof}

\subsection{Proof of Lemma~\ref{semi-exponential-property-cycles}}\label{most-important-block}
\label{proof-for-ld-estimates-c1d}

In this subsection, we conclude the proof of Lemma~\ref{semi-exponential-property-cycles}.

\begin{proof}
It suffices to consider $b=1$.
Parts \textit{i}) and \textit{ii}) of Lemma~\ref{distributional-equalities-via-scaled-diffusion-process} imply that, for every fixed $\epsilon_0>0$ and sufficiently large $t$,
		\[
		\P_0\left(C_1^\delta \geq t \right)=\P_0\left(\int_{0}^{\tau_t(0)}|X_{t}(s)|^{p} \mathrm{d}s > 1\right)
		\leq
		\P_{\frac{\delta}{t^{\alpha/p}}}\left(\int_{0}^{\tau_t(0)}|X_{t}(s)|^{p} \mathrm{d}s > 1-\epsilon_0\right)
		+
	\bigO\left(\exp\left( -t\epsilon_{0}{\frac{\delta^{2\kappa-p}}{\sigma^2}}\right)\right).
		\]
Combining this with
%	part \textit{i}) of Lemma~\ref{distributional-equalities-via-scaled-diffusion-process}, and
Lemma~\ref{stochastic-upper-bound-via-modified-dif-n-iid-cjs}, we have
\begin{align*}
%	&
	\P_0\left(C_1^\delta \geq t \right)
%	\leq \bigO\left(\exp\left( -t\epsilon{\frac{\delta^{2\kappa-p}}{\sigma^2}}\right)\right) +
%	\P_0\left(\int_{0}^{T_0\left(t,\frac{\delta}{t^{\alpha/p}}\right)}|X_{t}(s)|^{p} \mathrm{d}s > \epsilon_0 \right)
%	\\
% & \qquad\qquad\qquad+
% 	\P_0\left(\int_{T_0\left(t,\frac{\delta}{t^{\alpha/p}}\right)}^{\tau_t(0)}|X_{t}(s)|^{p} \mathrm{d}s > 1-\epsilon_0 \right)
%	\\
%	&
%	=\bigO\left(\exp\left( -t\epsilon{\frac{\delta^{2\kappa-p}}{2\sigma^2}}\right)\right)+\P_0\left(\int_{0}^{T_0\left(t,\frac{\delta}{t^{\alpha/p}}\right)}|X_{t}(s)|^{p} \mathrm{d}s > \epsilon_0 \right)
%	+
%	\P_{\frac{\delta}{t^{\alpha/p}}}\left(\int_{0}^{\tau_t(0)}|X_{t}(s)|^{p} \mathrm{d}s > 1-\epsilon_0 \right)
%	\\
	&
	\leq
	\underbrace{	\bigO\left(\exp\left( -t\epsilon_{0}{\frac{\delta^{2\kappa-p}}{2\sigma^2}}\right)\right)}_{\triangleq (I)}
	+
	2\underbrace{\P_{\frac{\delta}{t^{\alpha/p}}}
		\left(
		\int_{0}^{\tau_{t}(0)}(X_{t}(s))^p \cdot \mathbf{I}\left(X_{t}(s) \leq \epsilon \right) \mathrm{d}s \geq \epsilon_0(1-\epsilon_0)
		\right)}_{ \triangleq (II)}
	\\
	&
	\qquad\qquad\qquad\qquad\qquad\quad	
	+ 2\underbrace{\P\left(
		\sum_{j=1}^{N_{g}(p_{t,\epsilon/2})} C_{j,t} \geq (1-\epsilon_0)^2
		\right)}_{\triangleq (III)}.
	\end{align*}
Invoking Lemma~\ref{area-for-epsilon-bounded-trajectories},  Lemma~\ref{log-asy-sum-of-eps-semi-cycles}, and applying the principle of the largest term, we obtain
	\begin{align*}
		\limsup_{t \to \infty}\frac{1}{t^{\gamma}}\log \P_0\left(C_1^{\delta} \geq t \right)
	&
	\leq
	\lim_{\epsilon \downarrow 0}\max\left\{
	\lim_{t \to \infty}\frac{\log(I)}{t^{\gamma}}, \
	\lim_{t \to \infty}\frac{\log(II)}{t^{\gamma}}, \
\limsup_{t \to \infty}\frac{\log(III)}{t^{\gamma}}\right\}
	\\
	&
=	\limsup_{t \to \infty}\frac{\log(III)}{t^{\gamma}}
	\leq
	-(1-\epsilon_0)^{2\gamma}
	\cdot
	\mathrm{V}(0,\infty,\mathbf{D}, 1
	).
	\end{align*}
	%We invoke \textit{iii-4)}, and thus,
	Since $\lim_{\epsilon_0 \downarrow 0}(1-\epsilon_0)^{2\gamma}\cdot\mathrm{V}(0,\infty,\mathbf{D}, 1)
	=
	\mathrm{V}(0,\infty,\mathbf{D}, 1)$,
	we conclude the proof of the upper bound.

	For the lower bound,
	in view of Lemma~\ref{lower-bound-with-modified-process}, recall
	that
	$
	\mathrm{d}\tilde{X}_{t,\epsilon}(s)=\tilde{u}_{\epsilon}\big(\tilde{X}_{t,\epsilon}(s)\big)\mathrm{d}s+\frac{2^{-1/2}\sigma}{t^{\gamma/2}} \mathrm{d}B(s)$, $s\geq 0$.
 Hence,
	%where the drift term is equal to
	%\[
	%\tilde{u}_{\epsilon}(x)
	%=
	%\begin{cases}
	%-sgn(x)|x|^{\kappa}, &  \ \text{for} \ |x| \geq \epsilon
	%\\
	%-sgn(x)\epsilon^\kappa, &  \ \text{for} \ |x| \leq \epsilon
	%\end{cases}.
	%\]
	\begin{align*}
		&\P_{0}\left(
C_1^\delta > t
	\right)
	%\\
	%&
	=
	\P_0\left(
	\int_{0}^{B_1^{\delta}}|X(s)|^p\mathrm{d}s > t
	\right)
	%\\
	\\
	&\geq
	\P_{\frac{\delta}{t^{\alpha/p}}}\left(
	\tilde{X}_{t,\epsilon}(s) > s, \ \ s\in (0,\epsilon)
	\right)
%	\\
%	&
%	\qquad
%	\cdot
	\P_{\epsilon}
	\left(
	\int_{0}^{H}\tilde{X}_{t,\epsilon}(s)\mathrm{d}s > 1, \
	\inf_{s \in [0,H]}\{\tilde{X}_{t,\epsilon}(s)\} > \epsilon/2,  \ \sup_{s \in [0,H]}\{\tilde{X}_{t,\epsilon}(s)\} < M+\epsilon
	\right).
	\end{align*}	
	Next, in view of \textit{i}) and \textit{ii}) of Lemma~\ref{logarithim-lower-bound-for-full-cycle},
	\begin{itemize}
		\item[$i$)]	$
		\lim_{\epsilon \downarrow 0}\liminf_{t \to \infty} t^{-\gamma} \log\P_{\frac{\delta}{t^{\alpha/p}}}\left(
		\tilde{X}_{t,\epsilon}(s) > s, \ \ s\in  (0,\epsilon)
		\right) = 0.
		$
		\item[$ii$)] For $I_{H,\epsilon/2}=\{	\inf_{s \in [0,H]}\{\tilde{X}_{t,\epsilon}(s)\} > \epsilon/2\}$ and $S^M_{H,\epsilon}=\{\sup_{s \in [0,H]}\{\tilde{X}_{t,\epsilon}(s)\} < M+\epsilon\} $,
		\begin{align*}
	%	&
		\lim_{H \to \infty}\lim_{M \to \infty}\lim_{\epsilon \downarrow 0}\liminf_{t \to \infty} t^{ -\gamma} \log\P_{\epsilon}
		\left(
		\int_{0}^{H}|\tilde{X}_{t,\epsilon}(s)|^p\mathrm{d}s > 1, \
		I_{H,\epsilon/2}, \ S^M_{H,\epsilon}
		\right)
	%	\\
	%	&		
		\geq -\mathrm{V}(0,\infty,\mathbf{D},1).
		\end{align*}
	\end{itemize}
	Consequently,
	\begin{align*}
	\liminf_{t \to \infty}{t^{-\gamma}}
	\log \P\left(
C_1^\delta > t
	\right)
	\geq
	-\mathrm{V}(0,\infty,\mathbf{D},1).
	\end{align*}
	Since the upper and lower bounds coincide, the tail asymptotics of $C_1^\delta$ are established.
\end{proof}

\section{Proof of Lemma~\ref{bounded-moment-generating-function-t-1-delta}}
\label{section-proof-of-ndelta-asy}
%In this section, we provide the proofs of (the auxiliary) Lemma~\ref{bounded-moment-generating-function-t-1-delta} and
%Proposition~\ref{counting-process-asymptotics}.
%%In the following lemma, we show that $\E(e^{\theta \tau_1^\delta})$ is finite for some $\theta > 0$.
%\begin{proof}[Proof of Lemma~\ref{bounded-moment-generating-function-t-1-delta}]
		Recall that $\tau_1^\delta = B_1^\delta$ and $B_1^\delta = \inf\{t > A_1^{\delta} : |X(t)| = 0 \}$.
		In view of the strong Markov property, $
	\inf\{t > A_1^\delta: |X(t)|=0\} \overset{\cal{D}}{=}A_1^\delta+\inf\{s > 0: |X(s)|=0 \big| X(0)=\delta\}$, hence, it suffices to prove that the exponential moments of $A_1^\delta$ and $B_1^\delta|X(0)=\delta$ are finite in a neighborhood around $0$.
	Towards this, we invoke an upper bound for densities of first passage  times of diffusion processes (Result~\ref{bound-hitting-times-of-diffusion-processes}).
	However, $X$ does not satisfy the conditions of Result~\ref{bound-hitting-times-of-diffusion-processes} since we cannot ensure the existence of a strong solution.
	To deal with this, we construct the hitting times $\tau_\epsilon(\delta)$ and $\tau_{\epsilon}(0)$ that serve as stochastic upper bounds for $A_1^\delta$ and $B_1^\delta|X(0)=\delta$.
	Then, we verify that Result~\ref{bound-hitting-times-of-diffusion-processes} applies to $\tau_{\epsilon}(\delta)$ and $\tau_{\epsilon}(0)$, and we prove that the moment generating functions of $\tau_\epsilon(\delta)$ and  $\tau_{\epsilon}(0)$ are finite in a neighborhood around $0$.
	This implies the existence of  the moment generating function of $B_1^\delta$.
	
	We start with the moment generating function of $A_1^\delta$.
%	Since $\mathbf{D}$ is not differentiable at $0$, it does not satisfy the assumptions of Result~\ref{bound-hitting-times-of-diffusion-processes}. For this reason,
We construct a hitting time $\tau_\epsilon(\delta)$ based on a diffusion process $\check{X}_\epsilon$ that is stochastically smaller than $X$, and which has an absolutely continuous drift and admits a unique strong solution.
	Given $\epsilon > \delta / \sigma$, let $\check{X}_{\epsilon}(\cdot)$  be the solution of the following SDE:
	$$
	\mathrm{d}\check{X}_{\epsilon}(s)=\tilde u_\epsilon\left(\check{X}_{\epsilon}(s)\right)\mathrm{d}s+2^{-1/2}\sigma \mathrm{d}B(s), \ \mathrm{such\ that} \ \check{X}_{\epsilon}(0)=0.
	$$
%	with $\mathbf{L}_\epsilon(x)=\tilde u_\epsilon$.
%	where
%	$
%	\mathbf{L}_\epsilon(x)
%	=
%	\begin{cases}
%	-x^{\kappa}, &  \ \text{for} \ x > \epsilon
%	\\
%	-\epsilon^\kappa, &  \ \text{for} \ x \leq \epsilon
%	\end{cases}.
	% \textcolor{red}{BZ yet another way of writing down drift functions, does not look good. The referees were really kind to us. The same drift function was called %$\tilde u_\epsilon$ before. So no need to define a new diffusion either I think. }
	
	With regard to the diffusion process $\check{X}_{\epsilon}$, let $\tau_{\epsilon}(\delta)=\inf\{s \geq 0: \check{X}_{\epsilon}(s)=\delta\}$, i.e.,  the first time the diffusion $\check{X}_{\epsilon}$ reaches $\delta$.
	Since the drift $\tilde u_\epsilon$ is smaller than
	$\mathbf{D}$, by Theorem~1.1 in \cite{yamada1973comparison}, the diffusion process
	$\check{X}_{\epsilon}$ is stochastically smaller than the diffusion process $X$.
	As a result, $A_1^{\delta}$ is stochastically smaller than $\tau_\epsilon(\delta)$ in the usual stochastic order.
%	To illustrate this, notice that for any $t >0$,
%	$$
%	\{A_1^\delta \geq t \} = \{ X < \delta \ \text{over} \ [0,t]\} \ \text{and} \
%	\{\tau_{\epsilon}(\delta) \geq t \} = \{\check{X}_\epsilon < \delta \ \text{over} \ [0,t]\}.	$$
%	Invoking the inequality $\check{X}_\epsilon  \overset{st.}{\leq} X$, we obtain $\P\left(\check{X}_\epsilon < \delta \ \text{over} \ [0,t]\right) \geq \P\left(X< \%delta \ \text{over} \ [0,t]\right)$, and hence, $\P\left(A_1^{\delta} \geq t\right) \leq \P\left(\tau_\epsilon(\delta) \geq t\right)$.
%	Since the map $x \mapsto e^{\theta x}, \ \theta >0,$ is increasing, and $\P\left(A_1^{\delta} \geq t\right) \leq \P\left(\tau_\epsilon(\delta) \geq t\right)$, for every $t\geq 0$, we obtain
Consequently, $\E\left(e^{\theta A_1^{\delta}}\right)\leq \E\left(e^{\theta \tau_\epsilon(\delta)}\right)$.
	
	Now, we prove that $\E\left(e^{\theta \tau_\epsilon(\delta)}\right)$ is finite for some $\theta >0$.
	To apply Result~\ref{bound-hitting-times-of-diffusion-processes}, we have to work with a unit diffusion coefficient.
	Consider the function $F(y) \triangleq \frac{y}{2^{-1/2}\sigma}$.
	Due to Itô's formula, $\check{Y}_\epsilon(t) \triangleq F(\check{X}_{\epsilon}(t))$ satisfies
$
	\mathrm{d}\check{Y}_\epsilon(t)= l(\check{Y}_\epsilon(t))\mathrm{d}t+\mathrm{d}B(t), \   \check{Y}_\epsilon(0)=0,
$
	where
	$$
	l(x)
	=
	\begin{cases}
		-\frac{1}{(2^{-1/2}\sigma)^{1-\kappa}}x^{\kappa}, &  \ \text{for} \ x > \epsilon/(2^{-1/2}\sigma)
		\\
		-\epsilon^\kappa/(2^{-1/2}\sigma), &  \ \text{for} \ x \leq \epsilon/ (2^{-1/2}\sigma)
	\end{cases}.
	$$
	Then,
	\begin{align*}
	\tau_{\epsilon}(\delta)=\inf\{s \geq 0: \check{X}_{\epsilon}(s)=\delta\} &
	=\inf\{s \geq 0: F(\check{X}_{\epsilon}(s))=F(\delta)\}
	=\inf\{s \geq 0: \check{Y}_\epsilon(s)=\delta/(2^{-1/2}\sigma)\}.
	\end{align*}

	In view of Result~\ref{bound-hitting-times-of-diffusion-processes}, the first passage time $\tau_{\epsilon}(\delta)$ has a density $p_{\tau_{\epsilon}(\delta)}$ satisfying the following inequality:
	\begin{equation*}\label{upper-bound-for-density-x-epsilon}
	p_{\tau_\epsilon(\delta)}(t) \leq \frac{1}{t}(\delta/(2^{-1/2}\sigma)+\delta t)q(t,0,\delta/(2^{-1/2}\sigma))e^{G(\delta/(2^{-1/2}\sigma))-G(0)-\frac{t}{2}M(t)}, \qquad t \geq 0,
	\end{equation*}
	where
	$q(t,y,z)$ denotes the transition density of the Brownian motion,
	$G(y)=\int_{0}^{y}l(z)\mathrm{d}z$, and finally, we also have,
	$M(t) = \essinf_{y \in (-\infty,\delta/ (2^{-1/2}\sigma) )}\left\{l'(y)+l^2(y)\right\}$.
Since
	 $q\left(t,0,\frac{\delta}{2^{-1/2}\sigma}\right)=\frac{1}{\sqrt{2 \pi t}}\exp\left[-\frac{\delta^2}{\sigma^2  t}\right]$,
	 $G(0)=0$ and $M(t) \leq 2\epsilon^{2\kappa}/ \sigma^2$
	%and $M(t)=\essinf_{x \leq \delta/ (2^{-1/2}\sigma)} \{(-\epsilon^\kappa/ (2^{-1/2}\sigma))'+\epsilon^{2 \kappa}/ \sigma^2\}=\epsilon^{2\kappa}/ \sigma^2$,
we deduce the following upper bound:
	\begin{align*}
	  \E\left(e^{\theta \tau_\epsilon(\delta)}\right) % =\int_{\R_+}e^{\theta t} p_{\tau_\epsilon(\delta)}(t)\mathrm{d}t
%	&\leq 	
%	\int_{\R_+}e^{\theta t} \frac{1}{t}\left(\frac{\delta}{\sigma}+\delta t \right)	 \frac{1}{\sqrt{2 \pi t}}\exp\left[-\frac{\delta^2}{2 \sigma^2  t}+G(\delta/\sigma)-\frac{t}{2} \cdot M(t)\right] \mathrm{d}t
%	\\
	&
	\leq
	\int_{\R_+} e^{\theta t} \frac{1}{t}\left(\frac{\delta}{2^{-1/2}\sigma}+\delta t \right) \frac{1}{\sqrt{2 \pi t}}\exp\left[-\frac{\delta^2}{\sigma^2  t}+G(\delta/(2^{-1/2}\sigma))-t\cdot \epsilon^{2 \kappa}/\sigma^2\right] \mathrm{d}t
	\\
	&
	\leq
	\int_{\R_+} \frac{1}{t}\left(\frac{\delta}{2^{-1/2}\sigma}+\delta t \right) \exp{\left[\left(\theta-\frac{\epsilon^{2 \kappa}}{\sigma^2} \right)\cdot t\right]} \frac{1}{\sqrt{2 \pi t}}\exp\left[-\frac{\delta^2}{\sigma^2  t}+G(\delta/ (2^{-1/2}\sigma))\right] \mathrm{d}t.
	\end{align*}
	By the above upper bound, $\E(e^{\theta \tau_\epsilon(\delta)})$ is finite for all $\theta < \epsilon^{2\kappa}/2\sigma^2$, thus,  $\E(e^{\theta A_1^\delta})$ is finite for all $\theta < \epsilon^{2 \kappa}/2\sigma^2$.
	
	We apply a similar argument to the m.g.f. of $B_1^\delta|X(0)=\delta$.	
	To obtain a convenient framework, let $\hat {X}_{\epsilon}(\cdot)$ be the solution of the following SDE:
	$
	\mathrm{d}\hat{X}_{\epsilon}(s)=\mathbf{U}_\epsilon\left(\hat{X}_{\epsilon}(s)\right)\mathrm{d}s+\sigma \mathrm{d}B(s), \ \mathrm{such\ that}\ \hat{X}_{\epsilon}(0)=\delta,
	$
	where the drift term $\mathbf{U}_{\epsilon}$ is equal to
	\begin{equation*}
	\mathbf{U}_\epsilon(x)
	=
	\begin{cases}
	x^{\kappa}, &  \ \text{for} \ x > \epsilon
	\\
	\frac{x}{\epsilon^{1-\kappa}}, &  \ \text{for} \ 0 \leq x \leq \epsilon
	\\
	-sgn(x)|x|^{\kappa+1}, &  \ \text{for} \ -1 \leq x < 0
	\\
	-sgn(x)|x|^{\kappa}, &  \ \text{for} \ x < -1
	\end{cases}.
	\end{equation*}
	Furthermore, let $\tau_{\epsilon}(0)=\inf\{s \geq 0: \hat{X}_{\epsilon}(s)=0\}$. %, i.e., the first time $X_{\epsilon}$ reaches $0$.
	%With regard to the diffusion process $ X_{\epsilon}$,
	The drift $\mathbf{U}_\epsilon$ is bigger than
	$\mathbf{D}$, therefore and similar to above, by Theorem~1.1 in \cite{yamada1973comparison}, the diffusion process
	$\hat{X}_\epsilon $ is stochastically larger than the diffusion process $X$.
 Consequently, $B_1^\delta \mid X(0)=\delta$ is stochastically smaller than $\tau_\epsilon(0)$ in the usual stochastic order
%	To illustrate this, observe that for any $t >0$,
%	$
%	\{B_1^\delta \geq t |X(0)=\delta \} = \{X>0 \ \text{over} \ [0,t] \big | X(0)=\delta  \}
%	$
%	and
%	$
%	\{\tau_{\epsilon}(0) \geq t \} = \{X_{\epsilon}>0 \ \text{over} \ [0,t]\}.
%	$
%	Since $X_\epsilon$ is stochastically bigger than $X$, we obtain $\P\left(X_{\epsilon}>0 \ \text{over} \ [0,t] \big | X_{\epsilon}(0)=\delta \right) \geq \P\left(X>0 %\ \text{over} \ [0,t] \big | X(0)=\delta  \right)$, and hence, $\P\left(B_1^\delta \geq t|X(0)=\delta\right) \leq \P\left(\tau_\epsilon(0) \geq t\right)$.
%	The previous stochastic inequality in conjunction with the fact that the map  $x \mapsto e^{\theta x}, \ \theta >0,$ is increasing leads to
and $\E\left(e^{\theta B_1^\delta}\big|X(0)=\delta\right)\leq \E\left(e^{\theta \tau_\epsilon(0)}\right)$.

	It remains to show that $\E\left(e^{\theta \tau_\epsilon(0)}\right)$ is finite for some $\theta >0$.
	To apply Result~\ref{bound-hitting-times-of-diffusion-processes}, we again have to work with a unit diffusion coefficient.
	Thus, we consider the function $F(y) = \frac{y}{(2^{-1/2}\sigma)}$.
	By Itô's formula, $Y_\epsilon(t) \triangleq F(\hat{X}_{\epsilon}(t))$ satisfies
\[
\mathrm{d}Y_\epsilon(t)= v(Y_\epsilon(t))\mathrm{d}t+\mathrm{d}B(t), \qquad Y_\epsilon(0)=\delta/(2^{-1/2}\sigma),
\]	
where $v(y)=\frac{1}{2^{-1/2}\sigma}\mathbf{U}_{\epsilon}(2^{-1/2}\sigma y)$.
Therefore,
%	$
%	v(x)
%	=
%	\begin{cases}
%	\frac{1}{\sigma^{1-\kappa}}x^{\kappa}, &  \ \text{for} \ x > \epsilon/\sigma
%	\\
%	\frac{x}{\epsilon^{1-\kappa}}, &  \ \text{for} \ 0 \leq x \leq \epsilon/\sigma
%	\\
%	-sgn(x)\sigma^\kappa |x|^{k+1}, &  \ \text{for} \ -1/\sigma \leq x < 0
%	\\
%	-sgn(x)\frac{1}{\sigma^{1-\kappa}}|x|^{\kappa}, &  \ \text{for} \ x < -1/\sigma
%	\end{cases},
%	$
%and
%\begin{align*}
$\tau_{\epsilon}(0)
%=\inf\{s \geq 0: X_{\epsilon}(s)=0\} %&
%=\inf\{s \geq 0: F(X_{\epsilon}(s))=F(0)\}
%\\
%&
=\inf\{s \geq 0: Y_\epsilon(s)=0\}$
%\end{align*}
and its density $p_{\tau_{\epsilon}(0)}$ satisfies
	\begin{equation*}\label{upper-bound-for-density-x-epsilon-2}
	p_{\tau_\epsilon(0)}(t) \leq \frac{1}{t2^{-1/2}\sigma} \delta q\left(t,\delta/(2^{-1/2}\sigma),0\right)e^{G(0)-G(\delta/(2^{-1/2}\sigma))-\frac{t}{2}M^*(t)}, \qquad t \geq 0,
	\end{equation*}
	where
	$q(t,y,z)$ is the transition density of the Brownian motion, $G(y)=\int_{0}^{y}v(z)\mathrm{d}z$, and for every $t \geq 0$,
	$M^*(t) = \essinf_{y \in (0,\infty)}\left\{v'(y)+v^2(y)\right\}$.
With regard to each one of the terms above,
 $q\left(t,\frac{\delta}{2^{-1/2}\sigma},0\right)=\frac{1}{\sqrt{2 \pi t}}\exp\left[-\frac{\delta^2}{\sigma^2 t}\right]$,
 $M^*(t)=\essinf_{x \geq \epsilon/(2^{-1/2}\sigma)}\{\frac{1}{\sigma^{2(1-\kappa)}} x^{2\kappa}+\frac{\kappa}{(2^{-1/2}\sigma)^{(1-\kappa)}}{x^{1-\kappa}}\} \wedge \essinf_{ x \in [0,\epsilon/(2^{-1/2}\sigma)]}\{\frac{1}{\epsilon^{1-\kappa}}+\frac{x^2}{\epsilon^{2-2\kappa}}\} \triangleq \theta_\epsilon^*$, and
  $G(0)=0$.
	It can be verified that $\theta^*_{\epsilon}$ is strictly positive.
	Consequently,
	\begin{align*}
	\E\left(e^{\theta \tau_\epsilon(0)}\right)
%	= \int_{\R_+}e^{\theta t} p_{\tau_\epsilon(0)}(t)\mathrm{d}t
%	&\leq 	
%	\int_{\R_+}e^{\theta t}	\frac{1}{t \sigma} \delta \frac{1}{\sqrt{2 \pi t}}\exp\left[-\frac{\delta^2}{2 \sigma^2 t}-G(\delta/\sigma)-t/2 \cdot M^*(t)\right] %\mathrm{d}t \nonumber
%	\\
	&
	\leq
	\int_{\R_+} e^{\theta t} \frac{1}{t 2^{-1/2}\sigma} \delta \frac{1}{\sqrt{2 \pi t}}\exp\left[-\frac{\delta^2}{\sigma^2 t}-G(\delta/(2^{-1/2}\sigma))-t\cdot\theta_\epsilon^*/2\right] \mathrm{d}t \nonumber
	\\
	&
	\leq
	\int_{\R_+} \frac{1}{t 2^{-1/2} \sigma} \delta \exp{\left[\left(\theta-\theta_\epsilon^*/2 \right)\cdot t\right]} \frac{1}{\sqrt{2 \pi t}}\exp\left[-\frac{\delta^2}{\sigma^2  t}-G(\delta/(2^{-1/2}\sigma))\right] \mathrm{d}t.
	\end{align*}
	Hence, due to the upper bound derived above, $\E(e^{\theta \tau_\epsilon(0)})$ is finite for $\theta < \theta_\epsilon^*/2$.
	Consequently, $\E(e^{\theta B_1^\delta}\big| X(0)=\delta)$ is finite for all $\theta < \theta^*_\epsilon/2$.
	This, along with $\E(e^{\theta A_1^\delta})$ being finite for $\theta$ smaller than $\epsilon^{2\kappa}/2\sigma^2$, implies that $\E(e^{\theta \tau_1^\delta})$ is finite for all $\theta < \frac{1}{2}\min(\epsilon^{2\kappa}/\sigma^2, \theta_\epsilon^*)$.
%\end{proof}

\begin{appendix}
\section{Useful  LDPs}

In this subsection, we first present a useful sample-path large deviations principle for diffusion processes.
Recall that $\mathbb{C}[0,T]$ is the space of continuous functions---over the domain $[0,T]$---and that $\mathscr{H}[0,T]$ is the subspace of absolutely continuous functions with square integrable derivative.
Let $Y_{\epsilon}$, $\epsilon>0$, satisfy the following stochastic differential equation:
\begin{align*}\label{eq:freidlin-wentzel-sde-system}
&
\mathrm{d}Y_{\epsilon}(t)=-\mathbf{b}(Y_{\epsilon})\mathrm{d}t+
\epsilon \cdot a \cdot \mathrm{d}B(t), \quad t \in [0,T], \quad  %\nonumber\\
%&
Y_{\epsilon}(0)=x_0.
\end{align*}
Here, $B$ denotes a standard Brownian motion and $a$ is a positive constant.
The following result is a consequence of Theorem~5.6.7 and Exercise~5.6.24 in \cite{dembo2010large}:

\begin{result}\citep{dembo2010large}\label{FW-SPLDP}
	Suppose that $\mathbf{b}$ is a Lipschitz continuous function, then the stochastic process $Y_{\epsilon}$ satisfies the large deviations principle in $\mathbb{C}[0,T]$ with speed $1/\epsilon^2$ and with good rate function $I_{x_0, \mathbf{b}}^{T}: \mathbb{C}[0,T] \rightarrow \R_+ $, where
% \textcolor{red}{BZ: added a factor 2 in the denominator}
	\begin{equation*}\label{F.W.-rate-function}
	I_{x_0, \mathbf{b}}^{T}(\xi)
	=
	\begin{cases}
	\int_{0}^{T}\frac{|\dot{\xi}(s)-\mathbf{b}(\xi(s))|^2 }{2a^2}\mathrm{d}s, & \mathrm{for} \ \xi \in \mathscr{H}[0,T] \ \& \ \xi(0)=x_0
	\\
	\infty, & \mathrm{otherwise}
	\end{cases}.
	\end{equation*}
\end{result}

Next, we present a useful large deviations result for certain weighted random sums.

\begin{result}\label{LDP-semiexponential-rvs}\cite{gantert2014large}
Let $\{X_j\}_{j \geq 1}$ be a sequence of i.i.d. random variables on a probability space $(\Omega, \mathcal{F}, \P)$,
%with $\E(X_1^k)< \infty$ for every $k \in \mathbb{N}$,
and let $m=\E(X_1)$.
Suppose that there exist a constant $r \in (0,1)$, slowly varying functions
$b, c_1, c_2: (0,\infty) \to (0,\infty)$ and a constant $t^* > 0$ such that for $t \geq t^*$,
\[
c_1(t)\exp(-b(t)t^r) \leq \P(X_1 \geq t) \leq c_2(t)\exp(-b(t)t^r).
\]
Moreover, for every $n \in \N$, let $\{a_i(n)\}_{i \in \N}$ be a sequence of non-negative numbers that satisfy the following assumptions:
\begin{itemize}
\item[$i$)] There exists a real number $s_1 \neq 0$ such that the sequence $\{R(1,n)\}_{n \in \N}$ of real numbers defined by $\sum_{i=1}^{n}a_i(n)=s_1 R(1,n)$, for all $n \in \N$, satisfies $R(1,n) \to 1$ as $n \to \infty$.
\item[$ii$)] There exists a real number $s$ such that, for $a_{max}(n) \triangleq \max_{1 \leq i \leq n}a_i(n)$,  $\lim_{n \to \infty}n a_{max}(n)=s$.
\end{itemize}
Consider the weighted random walk $\bar S_n =\sum_{i=1}^{n}a_i(n)X_i$.
Then,
\[
\lim_{n \to \infty}\frac{1}{b(n)n^r}\log \P\left( \bar S_n \geq x \right)=-\left(\frac{x}{s}-\frac{s_1}{s}m \right)^r, \qquad \forall x > s_1 m.
\]
\end{result}

\begin{remark}\label{weighted-random-walk}
In the case of $\bar S_{\left\lfloor nB \right\rfloor} =\frac{1}{n}\sum_{i=1}^{\left\lfloor nB \right\rfloor}X_i$, $B>0$, we obtain an LDP by simply resetting $nB=n$.
This resetting results in a weighted random walk with weights $a_i(n)=B/n$.
Consequently, $s_1=s=B$, $\P(\bar S_{\left\lfloor nB \right\rfloor} \geq x)=\P\left(\bar S_n \geq x/B \right)$, and hence,
 \[
\lim_{n \to \infty}\frac{1}{b(n)n^r}\log\P\left(\bar S_{\left\lfloor nB \right\rfloor} \geq x \right)
=
-\left(x-mB \right)^r, \qquad \forall x > B m.
\]
\end{remark}

We finally present two results required in the proof of Lemma~\ref{area-for-epsilon-bounded-trajectories}.
The first result is a special case of Theorem~1 in \cite{bryc1997large} and is instrumental for the case $\kappa=1$.

\begin{result}\label{dembobryc}
Consider the stationary OU process  $\mathrm{d}Y(t) = - aY(t)\,\mathrm{d}t + \sqrt{a}\,\mathrm{d}B(t)$, with $B$ a standard Brownian motion and $a$ a positive constant.
Define $S_T\triangleq\int_0^T Y_t^2 \,\mathrm{d}t$.
Then $T^{-1}S_T$ satisfies an LDP with speed $T$ and rate function $I(x) = (a/4) (\sqrt{x}-1/\sqrt{x})^2$.
\end{result}

For the case $\kappa<1$, we apply the following result, which is a special case of Theorem~2.1 in \cite{wuyiao2008}.
We introduce some concepts first, specialized to the real line.
A Young function is a left-continuous convex even function $\Phi$ such that $\Phi(0)=0$, $\Phi(r) \rightarrow\infty$ as $r\rightarrow\infty$, and $\Phi(r)<\infty$  for (at least) some $r>0$.
Given a probability measure $\mu$, the Orlicz space $L_\Phi(\mu)$ is the space of all real-valued measurable functions $u$ such that $\Phi(\alpha |u(\cdot)|)$ is $\mu$-integrable
for some $\alpha>0$.
Define the Orlicz norm
$$
||f||_\Phi \triangleq \sup_{g: \int \Psi(|g|)\,\mathrm{d}\mu \leq 1} \int |fg| \,\mathrm{d}\mu,
$$
with $\Psi$ the Legendre transformation of $\Phi$.

\begin{result}
\label{wuyiao}
Let $X$ be a stationary real-valued Markov process with generator $\mathcal{L}$ and invariant distribution $\mu$.
Assume there exists a Young function $\Phi$ and
real-valued constants $C_1$ and $C_2$ such that the functional inequality
\begin{equation}
\label{condition-wu1}
    ||f^2||_{\Phi} \leq C_1 \int -(\mathcal{L}f(x))f(x) \,\mathrm{d}\mu(x) + C_2 \int f(x)^2 \,\mathrm{d}\mu(x)
\end{equation}
holds for all twice-differentiable functions $f \in L_\Phi(\mu)$.
In addition, assume that $F$ is a real-valued function such that
\begin{equation}
\label{condition-wu2}
    \int \Psi(\lambda |F(x)|) \,\mathrm{d}\mu (x) <\infty, \quad \lambda \in (0,\infty).
\end{equation}
Then $\int_0^T F(X(t))\,\mathrm{d}t/T$ satisfies an LDP with speed $T$ and rate function with compact level sets.
\end{result}

\section{A Bound on First Passage Time Densities}

Let $X$ satisfy the following SDE: $\mathrm{d}X(t)=\nu(X(t))\mathrm{d}t+\sigma(X(t))\mathrm{d}B(t), \ X(0)=x$, where $B$ is a standard Brownian motion and $\sigma(y)$ is differentiable and non-zero inside the diffusion interval.
By defining the Lamperti transform $F(y) = \int_{y_0}^{y}\frac{1}{\sigma(z)}\mathrm{d}z$ for some $y_0$ from the diffusion interval of $X(t)$, and next considering $Y(t) \triangleq F(X(t))$, one can transform the process into one with unit diffusion coefficient.
Due to Itô's formula, the resulting process with unit diffusion coefficient will have a drift coefficient $\mu(y)$ given by the composition

$$\mu(y) = \left(\frac{\nu}{\sigma}-\frac{1}{2}\sigma'\right) \circ F^{-1}(y).$$

%In the particular case the diffusion process solves the folowing SDE:
%
%\begin{align}\label{•}
%&
%\mathrm{d}X(t)=\mathbf{D}(X(t))\mathrm{d}t+\sigma\mathrm{d}B(t), \ t \geq 0, \\
%&
%\text{s.t.} \ X(0)=-\delta, \ \text{and} \nonumber \ \\
%&
%\mathbf{D}(X(t))=-sgn(X(t)) \cdot |X(t)|^{\kappa}, \ \kappa \in (0,1], \nonumber
%\end{align}
%we can apply the above transformation with $y_0=0$ and obtain the process satisfying the following SDE:
%
%\begin{align*}
%&
%\mathrm{d}X(t)=\mu(X(t))\mathrm{d}t+\mathrm{d}B(t), \ t \geq 0, \\
%&
%\text{s.t.} \ X(0)=-\delta, \ \text{and} \nonumber \ \\
%&
%\mu(X(t))=-sgn(X(t)) \cdot \frac{|X(t)|^{\kappa}}{\sigma^{1-\kappa}}, \ \kappa \in (0,1], \nonumber
%\end{align*}

 The following result provides upper bounds for the first passage time densities of
 $\tau=\inf\{t >0: X(t) \geq g(t)\}$ and $\rho=\inf\{t >0: X(t) \leq g(t)\}$
 with $X$ a unit diffusion process.
 For a unit diffusion process with diffusion interval the whole real line and drift $\mu(\cdot)$,

 \begin{itemize}
 \item[1)] let $q(t,y,z)$ denote the transition density of the Brownian motion,
 \item[2)] let $\bar{g}(t)= \max _{0 \leq s \leq t}g(s)$, and $\underline{g}(t)=\min_{0 \leq s \leq t}g(s)$,
 \item[3)] define $G(y)=\int_{0}^{y}\mu(z)\mathrm{d}z$,
 \item[4)] define $M^*(t) = \essinf_{y \in [\underline{g}(t),\infty)}\left(\mu'(y)+\mu^2(y)\right)$, and, finally,
 \item[5)] define $M(t) = \essinf_{y \in (-\infty,\bar{g}(t)]}\left(\mu'(y)+\mu^2(y)\right)$.
 \end{itemize}

\begin{result}[Theorem~4.1 and Corollary~4.6 in \cite{downes2008boundary}]\label{bound-hitting-times-of-diffusion-processes}
Let $X(t)$ be a diffusion process satisfying $\mathrm{d}X(t)=\mu(X(t))\mathrm{d}t+\mathrm{d}B(t)$, $X(0)=x$, where $\mu$ is absolutely continuous and such that the SDE has a unique strong solution.
Let $g(t)$ be such that $g(0)<x$ and, for some $K >0$ and $T>0$,
\[
 g(t+h)-g(t) \leq Kh, \qquad 0 \leq t < t+h \leq T.
\]	
	Then, the first passage time $\tau$ of the process $X$ for the boundary $g(t)$, $t \in (0,T)$, has a density $p_{\rho},$ satisfying
	\[
	p_{\rho}(t) \leq B^{\ast}(g,t),
	\]
	where $B^{\ast}(g,t)$ is given by
	\[
	B^{\ast}(g,t)= \frac{1}{t}(x-g(t)+Kt)q(t,x,g(t))e^{G(g(t))-G(x)-\frac{t}{2} M^*(t)}.
	\]
	In addition, if $g(t)$ is such that $g(0)>x$ and, for some $K >0$ and $T>0$,
\[
-Kh \leq g(t+h)-g(t), \qquad 0 \leq t < t+h \leq T,
\]	
	then the first passage time $\tau$ of the process $X$ for the boundary $g(t)$, $t \in (0,T)$, has a density $p_{\tau},$ satisfying
	\[
	p_{\tau}(t) \leq B^{\circ}(g,t),
	\]
	where $B^{\circ}(g,t)$ is given by
	\[
	B^{\circ}(g,t)= \frac{1}{t}(g(t)+Kt-x)q(t,x,g(t))e^{G(g(t))-G(x)-\frac{t}{2} M(t)}.
	\]
\end{result}

%\section{Proof Diagram}
%
%In Figure~\ref{proof-flow-chart}, we graphically illustrate the connections between the various lemmas and results leading to the proof of our main result (Theorem~\ref{asympotics-for-%cal-A}).
%\begin{figure}[h]
%\centering
%\includegraphics[scale=0.53]{proofdiagramnew}
%\caption{The figure displays a diagram of our proof.
%The main result is colored green and the two key results used in its proof are colored blue.}
%\label{proof-flow-chart}
%\end{figure}

\end{appendix}

%\section*{Acknowledgements}
%We are very grateful to the Editor and two referees for many useful comments that have greatly improved the paper.

\begin{funding}
 %We are very grateful for comments from Hugo Touchette.
Bazhba and Laeven were supported from the NWO under an NWO-Vici grant 2020--25.
Blanchet was supported from the NSF via grant DMS-EPSRC 2118199 and NSF-DMS 1915967, and from  AFOSR via grant MURI FA9550-20-1-0397.
\end{funding}

\bibliographystyle{imsart-number}
\bibliography{biblio-diffusion-process-project.bib}
\end{document}